\theoremstyle{plain}
\newtheorem{theorem}{Theorem}[section]
\newtheorem{proposition}[theorem]{Proposition}
\newtheorem{lemma}[theorem]{Lemma}
\newtheorem{claim}[theorem]{Claim}
\theoremstyle{definition}
\theoremstyle{remark}
\newtheorem{remark}[theorem]{Remark}
\numberwithin{equation}{section}
\numberwithin{theorem}{section}
\newcommand{\mc}[1]{{\mathcal #1}}
\newcommand{\bb}[1]{{\mathbb #1}}
\newcommand{\upbar}[1]{\,\overline{\! #1}}
\renewcommand{\epsilon}{\varepsilon}
\renewcommand{\tilde}{\widetilde}
\renewcommand{\hat}{\widehat}
\renewcommand{\div}{\mathop{\rm div}\nolimits}
\definecolor{light}{gray}{.9}
 \definecolor{refkey}{gray}{.4}
 \definecolor{labelkey}{gray}{.4}
\title[Flows, currents, and cycles]
{Flows, currents, and cycles for  Markov Chains: large 
deviation asymptotics}
\author[L.\ Bertini]{Lorenzo Bertini}
\address{Lorenzo Bertini \hfill\break \indent
   Dipartimento di Matematica, Universit\`a di Roma `La Sapienza'
   \hfill\break \indent
   P.le Aldo Moro 2, 00185 Roma, Italy}
 \email{bertini@mat.uniroma1.it}
\author[A.\ Faggionato]{Alessandra Faggionato}
\address{Alessandra Faggionato \hfill\break \indent
  Dipartimento di Matematica, Universit\`a di Roma `La Sapienza'
  \hfill\break \indent
  P.le Aldo Moro 2, 00185 Roma, Italy}
\email{faggiona@mat.uniroma1.it}
\author[D.\ Gabrielli]{Davide Gabrielli}
\address{Davide Gabrielli \hfill\break \indent
  DISIM, Universit\`a dell'Aquila
  \hfill\break\indent
  Via Vetoio,   67100 Coppito, L'Aquila, Italy}
\email{gabriell@univaq.it}
\begin{document}

\begin{abstract} 
We consider a continuous time Markov chain on a countable state space. We  prove a joint large deviation principle (LDP)  of the empirical measure and  current
in the limit of large time interval. The proof is based on  results on the joint large deviations of the empirical measure and    flow obtained in \cite{BFG}. By improving such results we also show, under additional assumptions, that the LDP holds with the strong $L^1$  topology on  the space of currents.  
We deduce a general version of the Gallavotti--Cohen (GC)  symmetry  for the current field and show that it implies the so--called fluctuation theorem 
for the GC functional. We also analyze the large deviation properties  of  generalized  empirical currents associated to a fundamental basis in the cycle space, which, as we show, are given by  the first class homological coefficients in the   graph underlying the Markov chain. 
Finally, we discuss in detail some examples.

\bigskip

\noindent {\em Keywords}: Markov chain, large deviations,
   empirical flow, empirical current, cellular homology, Gallavotti--Cohen fluctuation theorem.

\smallskip

\noindent{\em AMS 2010 Subject Classification}:
60F10,  
60J27;  
Secondary
82C05,  

\end{abstract}

\maketitle
\thispagestyle{empty}

\section{Introduction}
\label{introduzione}

We consider a continuous time Markov chain on a countable (finite or
infinite) state space $V$ with transition rates $r(\cdot,\cdot)$. We
assume that the chain is ergodic and positive recurrent, so that it
admits a unique invariant probability distribution $\pi$.

  A
natural observable is given by  the \emph{empirical measure} $\mu_T$, which
accounts for the fraction of time spent on the variouos states up to
time $T$. As $T\to \infty$, $\mu_T$ converges to $\pi$. The large
deviation principle for the family $\{\mu_T\}$ is  the classical
Donsker-Varadhan theorem \cite{DV4}.  
Other natural observables are the \emph{empirical flow} $Q_T$ and
\emph{empirical current} $J_T$, which respectively account for the
total numbers of jumps and for the net flow between pairs of states
per unit of time. In particular, given two states $y,z\in V$, it holds  
$J_T(y,z)=Q_T(y,z)-Q_T(z,y)$.  As $T\to \infty$, $Q_T(y,z)$ and
$J_T(y,z)$ respectively converge to $\pi(y)r(y,z)$ and
$\pi(y)r(y,z)-\pi(z)r(z,y)$.  The large deviation principle for the
family $\{(\mu_T,Q_T)\}$ is proven in \cite{BFG}.

\medskip

The interest for these observables comes from several
applications.  We mention some of them, mainly related to the concept of work, to the Gallavotti--Cohen functional and to the concept of activity in kinetically constrained spin systems. 

 When the Markov chain models the stochastic dynamics of
a physical particle in presence of an external field and thermal noise,
the work done by the field can be expressed in terms of the
empirical current. When modeling  biochemical systems, the
state describes both the mechanical and the chemical configuration. One
is then interested on the work done both by the applied mechanical
force and the chemical one, in which the latter is induced by
differences in the chemical potentials. In both cases the work is a
linear function of the empirical current. 
Significant examples are biochemical systems given by single molecules
like molecular motors \cite{S}.

In out--of--equilibrium statistical mechanics a much studied
observable is the Gallavotti--Cohen functional $W_T$. It is defined
as follows \cite{LS}: $e^{-T W_T}$ 
is the Radon--Nikodym derivative of the time--reversed stationary process 
$\bb P_\pi^*$ 
w.r.t. the stationary process itself $\bb P_\pi$ in the time window
$[0,T]$. 
It follows that $W_T$ accounts for the
irreversibility of the stochastic dynamics and its expectation
w.r.t.\ $\bb P_\pi$ is 
the  relative entropy of $\bb P_\pi$ w.r.t.\ 
$\bb P_\pi^*$ per unit of time. 
By a straightforward computation, it turns out that $W_T$ 
is a linear function of the empirical current apart boundary terms. 
When the state space is finite, the large deviation principle for
$\{W_T\}$ has been derived in \cite{LS} by the G\"artner-Ellis
theorem. The so-called \emph{fluctuation theorem} (or
\emph{Gallavotti-Cohen symmetry})  is
then the identity $\iota(u)-\iota(-u)= -u$ satisfied by  the corresponding rate
function $\iota\colon \bb R \to \bb R_+$.   

For kinetically constrained spin systems, see  \cite{BT} and references
therein, the empirical flow is a relevant observable and its large deviation
properties exhibit peculiar and rich features. 
 More precisely,   given a system of $N$ spins,  
 the $N$--normalized  total number of jumps per unit time (also  called \emph{activity}) has a nontrivial second order LDP in the limit  $T \to \infty$ and afterwards $N \to \infty$. We point out that the above activity  is proportional 
 to   the total mass of the empirical flow.

\medskip
Starting from the results in \cite{BFG}, in this paper we derive the
large deviation principle for the family $\{(\mu_T,J_T)\}$ 
(Theorem~\ref{LDP:misura+cor}). By contraction, we then deduce the
large deviation principle for the Gallavotti-Cohen functional and show
the rate function $\iota$ satisfies the Gallavotti-Cohen symmetry 
(Theorem~\ref{LDP:GC}). We remark that this derivation yields an
explicit variational representation of $\iota$, while the derivation
via G\"artner--Ellis theorem gives a spectral characterization \cite{LS}.   
For infinite state spaces  there are however some technical issues that are best exemplified in
the case of a single particle performing a random walk on $\bb Z^d$
with confining potential $U$ and external field $F$. Since the result
in \cite{BFG} is proven by using the bounded weak* topology for the
empirical flow, the contraction can be performed only when the
external field vanishes at infinity. On the other hand, a natural
condition is that $F$ is bounded. To overcome the requirement of $F$
vanishing at infinity, we prove
the large deviation principle for $\{(\mu_T,Q_T)\}$ in the
strong $L^1$ topology for the empirical flow 
(Theorem~\ref{freedom}) under (needed) additional conditions in the
general setting.  As further reinforcement of the results of \cite{BFG} we also show  that some  
 technical assumption there   can be dropped (see Proposition \ref{bresaola}).

We continue  our investigation of  Gallavotti--Cohen type symmetries.
Consider the transition graph $G$, with vertex set $V$, of the Markov
chain. 
For biochemical models, as explained in Section \ref{sec:hom}, the work of the mechanical/chemical forces can
be expressed in terms of the homological coefficients of the trajectory
in a suitable basis of the first cellular homology class $H_1(G;\bb R)$
of $G$ \cite{SCH}.  For finite state space, the analysis of the large
deviations of the homological coefficients and the related
Gallavotti--Cohen symmetry has been deduced in \cite{AG1,FD} via G\"artner--Ellis theorem. We
extend this result to infinite state space emphasizing the
relationship of the homological coefficients with the empirical
current (Theorem~\ref{anonimo}). 
We finally point out that  the Gallavotti-Cohen symmetry both for
the Gallavotti-Cohen functional and the homological coefficients is a 
consequence of a general symmetry of the rate functional for 
$\{(\mu_T,J_T)\}$ (Theorem~\ref{iacopo_ale}).

Finally, in Section \ref{s:ex} we discuss several  examples in which some rate
functionals can be computed explicitly.   

We conclude with further  bibliographical  remarks. In the context of finite state space, the joint LDPs for $\{(\mu_T, Q_T)\}$ and   $\{(\mu_T, J_T)\}$  have been discussed in \cite{WK,Maes2,Maes3}. See also \cite{Maes4} for a perturbative expansion in the context of non--equilibrium statistical mechanics. For countable state spaces, a weak form of joint LDP for $\{(\mu_T, Q_T)\}$ is  derived in \cite{dF}. 
The joint LDP for  $\{(\mu_T, J_T)\}$  of a Brownian motion on a compact Riemannian manifold is proved in \cite{KKT,K}. See also the discussion in \cite{Maes1} for diffusions on the  torus $\bb T^d$ and on $\bb R^d$ with a confining potential.

\section{Basic setting}
\label{definizioni}

We consider a continuous time Markov chain $\xi_t$, $t \in \bb R_+$
on a countable (finite or infinite) state space $V$. The Markov
chain is defined in terms of the \emph{jump rates} $r(x,y)$, $x \not
=y$ in $V$, from which one derives the holding times and the jump
chain \cite{N}. %
Since the holding time at $x\in V$ is

The basic assumptions on the chain are the following:
\begin{itemize}
\item[(A1)]
  for each $x \in V$, $r(x):= \sum _{y \in V} r(x,y)$ is finite;
\item[(A2)]
   for each $x \in V$ the Markov chain $\xi^x_t$ starting from $x$
   has no explosion a.s.;
\item[(A3)]
  the Markov chain is irreducible, i.e.\ for each $x,y \in V$ and $t>0$
  the event $\{\xi^x_t=y\}$ has strictly positive probability;
\item[(A4)]
  there exists a unique invariant probability measure, that is denoted
  by $\pi$.
\end{itemize}
By assumption (A1) the holding time at $x \in V$ is a well defined exponential random variable of parameter
 $r(x)$.
As in \cite{N}, by invariant probability measure $\pi$ we mean a
probability measure on $V$ such that
\begin{equation}
  \label{invariante}
  \sum _{y \in V} \pi(x) \, r(x,y)
  = \sum _{y \in V} \pi(y) \, r(y,x)\qquad \forall \:x \in V
\end{equation}
where we understand $r(x,x)=0$.
We refer to 
Section \ref{ragnatela} for a discussion on the above  assumptions (A1),...,(A4)  and their relation with Condition $C(\sigma)$ introduced in the next section.
We only recall that   $\pi(x)>0$ for all $x \in V$, the Markov chain
starting with distribution $\pi$ is stationary (i.e.\ is left
invariant by time-translations), and   the ergodic theorem holds,
i.e.\ for any bounded function $f: V \to \bb R$ and any initial
distribution
\begin{equation}
  \label{ergodico}
  \lim_{T \to +\infty} \frac{1}{T} \int_0^T\!dt\, f(\xi_t)
  = \langle\pi, f\rangle
  \qquad \textrm{a.s.}
\end{equation}
where $\langle\pi,f\rangle$ denotes the expectation of $f$ with
respect to $\pi$.

We consider $V$ endowed with the discrete topology and the
associated Borel $\sigma$-algebra given by the collection of all the
subsets of $V$.  Given $x\in V$, the distribution of the Markov
chain $\xi^x_t$ starting from $x$, is a probability measure on the
Skorohod space of c\`adl\`ag paths $D(\bb R_+;V)$ that we denote by $\bb P_x$. The
expectation with respect to $\bb P_x$ is denoted by $\bb E_x$. In
the sequel we consider $D(\bb R_+;V)$ equipped with the canonical
filtration,
 the canonical coordinate in $D(\bb R_+;V)$ is denoted by $X_t$.
The set of probability measures on $V$ is denoted by $\mc P(V)$ and
it is considered endowed with the topology of weak convergence and
the associated Borel $\sigma$-algebra.

\subsection{Empirical measure and empirical flow}
\label{s:emef}

Given $T>0$ the \emph{empirical measure}
$\mu_T\colon D(\bb R_+;V)\to \mc P(V)$ is defined by
\begin{equation*}
  \mu_T \, (X) = \frac 1T\int_0^T\!dt \, \delta_{X_t}\,,
\end{equation*}
where $\delta_y$ denotes the pointmass at $y$.
By the ergodic theorem    the sequence of probabilities $\{\bb P_x \circ
\mu_T^{-1}\}_{T>0}$ on $\mc P(V)$ converges to $\delta_\pi$.

\smallskip

 We denote by  $E$  the (countable) set of ordered
edges in $V$ with strictly positive jump rate, i.e.\
\[ E:= \{ (y,z)\in V \times V \,:\, r(y,z)>0\}\,,\]  by $L^1(E)$ the
collection of absolutely summable functions on $E$ and by $\| \cdot
\|$ the associated $L^1$--norm. The set of positive elements in
$L^1(E)$ is denoted by $L^1_+(E)$.
Note that, since $V$ has the discrete topology and is countable, any path in
$D(\bb R_+;V)$ has a locally finite number of jumps. In particular,
for each $T>0$ we can
define the \emph{empirical flow} as the map $Q_T
\colon D(\bb R_+;V)\to L^1_+(E)$ given by
\begin{equation}
  \label{montecarlo}
  Q_T(y,z) \, (X) :=
  \frac{1}{T} \sum_{0\leq t\leq  T} \mathds{1}\left( X_{t^-}=y,\; X_{t}=z\right)
  \qquad (y,z)\in E\,,
\end{equation}
where, in general,  $\mathds{1}(A)$ denotes the characteristic function of $A$.
Namely, $ T Q_T(y,z)$ gives    the  number of jumps from
$y$ to $z$ in the time interval $[0,T]$.

Elements of $L^1_+(E)$ will be denoted by $Q$ and called
\emph{flows}. Given a flow $Q$ we let its \emph{divergence}
$\div Q \colon V\to \bb R$ be the pointwise difference between the outgoing flow and the ingoing one, namely
\begin{equation}
  \label{divergenza_fluss}
  \div Q \, (y)= \sum _{z : \, (y,z)\in E} Q(y,z)- \sum_{z:\, (z,y)\in E} Q(z,y),
  \qquad y\in V.
\end{equation}
 Observe that the divergence maps $L^1_+(E)$  to $L^1(V)$.
To each probability $\mu \in \mc P(V)$  such that $\langle\mu,
r\rangle<+\infty$ we associate the flow $Q^\mu$ defined by
\begin{equation}
  \label{Qmu}
  Q^\mu(y,z) := \mu(y) \, r(y,z)
  \qquad (y,z)\in E.
\end{equation}
Note that  $Q^\mu$ has vanishing divergence if  and only if $\mu$ is
invariant, i.e. $\mu=\pi$.

By the ergodic theorem and a martingale argument (cf. \cite{BFG}) one can show  that
for each $x\in V$ and $(y,z)\in E$ the sequence of real random
variables $Q_T(y,z)$ converges as $T\to+\infty$ to $Q^\pi(y,z)$  in probability with respect to $\bb
P_x$. 

\section{Joint large deviations for the empirical measure and flow}
\label{s:ldef}

In this section we recall the main results of
 \cite{BFG}.
  The space  $L^1_+(E)$  is  endowed
   with  the bounded weak* topology \cite{Me}, which is defined as follows. Let $C_0(E)$ be the space of functions $f:E \to \bb R$ vanishing  at infinity, endowed with the uniform norm. Then its dual space is given by $L^1(E)$ endowed with the strong topology (i.e. the topology determined by the $L^1$--norm).  A basis of the bounded weak* topology on $L^1(E)$ is then given by the sets
\[ \{ q \in L^1(E):  \langle  q-\bar q ,f _n \rangle  <1  \; \;\forall n \geq 1  \}\]
as $\bar q$ varies among $L^1(E)$ and  $(f_n)_{n \geq 1}$ varies among the sequences in $C_0(E)$ converging to $0$ in uniform norm.
In general, given $q \in L^1(E)$ and $f \in C_0(E)$, we set $\langle q , f \rangle:= \sum _{e \in E} q(e) f(e) $. Finally, the
 bounded weak* topology on $L^1_+(E)$ is the inherited subspace topology  on $L^1_+(E)\subset L^1(E)$, when
$L^1(E)$ itself is endowed with the  above defined  bounded weak* topology.

 One can prove (cf. \cite{Me}[Cor. 2.7.4])  that
 a  subset $W \subset L^1 (E)$ is open in the bounded weak* topology
 if and only if for each $\ell >0$ the
set $\{ q \in W\,:\, \|q\|_1 \leq  \ell\}$ is open in the ball  $\{ q \in L^1(E)\,:\, \|
q\|_1 \leq \ell\}$ endowed with  the weak*  topology inherited from $L^1(E)$. We recall that the weak*  topology of $L^1(E)$ is the weakest topology such that the map $L^1(E)\ni q \to \langle q,f \rangle   \in \bb R$ is continuous for any map $f \in C_0(E)$.
  When $E$ is finite, the bounded weak* topology coincides with the  strong topology. If $E$ is infinite then
 the former is weaker than the latter and  cannot be metrized.

\medskip

We can now recall the LDP proved in \cite{BFG}. We start from the assumptions.
To this aim,
 given $f\colon V\to \bb R$ such that  $\sum_{y\in V }r(x,y) \, |f(y)| <+\infty$
for each $x\in V$,
 we
denote by $Lf \colon V\to \bb R$ the function defined by
\begin{equation}
  \label{Lf}
  L f\, (x) := \sum_{y\in V} r(x,y) \big[ f(y)-f(x)\big]
  ,\qquad x\in V.
\end{equation}

\noindent
{\bf Condition $\mathbf{C(\sigma)}$}
\emph{
 Given $\sigma \in \bb R_+$ we say that Condition $C(\sigma)$ holds if
  there exists a sequence of functions $u_n\colon V \to (0,+\infty)$
  satisfying the following requirements:
  \begin{itemize}
  \item [(i)] For each $x\in V$ and $n\in\bb N$ it holds $\sum_{y\in V} r(x,y)
    u_n(y) <+\infty$. 
  \item [(ii)] The sequence $u_n$ is uniformly bounded from below.
    Namely, there exists $c>0$ such that $u_n(x)\ge c$ for any $x\in
    V$ and $n\in\bb N$.
  \item[(iii)] The sequence $u_n$ is uniformly bounded from above on compacts.
    Namely, for each $x\in V$ there exists a
    constant $C_x$ such that for any $n\in\bb N$ it holds $u_n(x)\le C_x$.
  \item [(iv)] Set $v_n :=  - Lu_n / u_n$. The sequence
    $v_n\colon V\to \bb R$ converges pointwise to some $v\colon V\to \bb R$.
  \item[(v)] The function $v$ has compact level sets.
    Namely, for each $\ell\in \bb R$
    the level set $\big\{x \in V \,:\, v(x)\leq \ell\big\}$ is
    finite.
  \item[(vi)]
    There exists 
     a positive
    constant $C$ such that
    $v \ge \sigma \, r - C$.
  \end{itemize}
}

\medskip
 Let
$\Phi\colon \bb R_+ \times \bb R_+ \to [0,+\infty]$ be the function
defined by
\begin{equation}
  \label{Phi}
   \Phi (q,p)
   :=
   \begin{cases}
     \displaystyle{ q \log \frac qp - (q-p)}
     & \textrm{if $q,p\in (0,+\infty)$}
     \\
     \;p  & \textrm{if $q=0$, $p\in [0,+\infty)$}\\
     \; +\infty & \textrm{if $p=0$ and $q\in (0,+\infty)$.}
   \end{cases}
\end{equation}
For $p>0$, $\Phi( \cdot, p)$ is a nonnegative strincly convex function and is zero only at $q=p$. Indeed, since $\Phi(q,p)=\sup_{s\in \mathbb R}\left\{qs-p(e^s-1)\right\}$, $\Phi$   is the rate function for the LDP of the sequence  $N_T/T$ as $T \to +\infty$, $(N_t)_{t \in \bb R_+}$
being a Poisson process with parameter $p$.

 Finally,
we let $I\colon \mc P(V)\times L^1_+(E) \to
[0,+\infty]$ be the functional defined by
\begin{equation}
  \label{rfq}
  I (\mu,Q) :=
  \begin{cases}
    \displaystyle{
    \sum_{(y,z)\in E} \Phi \big( Q(y,z),Q^\mu(y,z) \big)
    }& \textrm{if  } \; \div Q =0\,,\; \langle \mu,r \rangle < +\infty
    \\
    \; +\infty  & \textrm{otherwise}.
  \end{cases}
\end{equation}
\begin{remark}\label{silente} As proved in \cite{BFG}[Appendix A]    the above condition
 $\langle \mu, r \rangle < +\infty$ can be removed, since
 the series  in \eqref{rfq} diverges  if $\langle \mu , r \rangle
 =+\infty$.
\end{remark}

\begin{theorem}[Bertini, Faggionato, Gabrielli \cite{BFG}]$\,$
  \label{LDP:misura+flusso} \\
  Endow $\mc P(V)$ with the weak topology and $L^1_+(E)$ with  the bounded weak* topology.
   Assume the Markov chain satisfies (A1)--(A4) and   Condition $C(\sigma)$ with $\sigma >0$.
  Then, as $T\to+\infty$, the sequence of probability measures $\{\bb
  P_x\circ (\mu_T,Q_T)^{-1}\}$ on $\mc P(V)\times L^1_+(E)$
  satisfies a LDP with good and convex rate function $I$.
  Namely, for   each closed set $\mc C\subset \mc P(V)\times L^1_+(E)$,
  and each open  set $\mc A \subset \mc P(V)\times L^1_+(E)$, it
  holds for each $x \in V$
  \begin{align}
    \label{ubldp}
    & \varlimsup_{T\to+\infty}\;
    \frac 1T \log \bb  P_x \Big( (\mu_T,Q_T) \in \mc C \Big)
    \le -\inf_{(\mu,Q)\in \mc C} I(\mu,Q),
    \\
    \label{lbldp}
    & \varliminf_{T\to+\infty}\;
    \frac 1T \log \bb P_x \Big( (\mu_T,Q_T) \in \mc A \Big)
    \ge -\inf_{(\mu,Q)\in \mc A} I(\mu,Q).
   \end{align}
\end{theorem}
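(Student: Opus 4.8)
The plan is to follow the classical Donsker--Varadhan strategy, adapted to the pair $(\mu_T,Q_T)$ and to the non--metrizable bounded weak* topology. For the upper bound I would use the family of exponential martingales: for a bounded, finitely supported $g\colon E\to\bb R$ the process
\begin{equation*}
  M^g_t \;=\; \exp\Big\{ \sum_{0\le s\le t} g(X_{s^-},X_s) \;-\; \int_0^t\!ds\, \sum_{z} r(X_s,z)\big(e^{g(X_s,z)}-1\big) \Big\}
\end{equation*}
is a mean--one martingale under $\bb P_x$, being the Radon--Nikodym derivative on $\mc F_t$ of the chain with tilted rates $r(y,z)e^{g(y,z)}$ with respect to the original one. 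Since $\sum_{0\le s\le t}g(X_{s^-},X_s)=t\,\langle Q_t,g\rangle$ and the compensator equals $t\,\langle Q^{\mu_t},e^{g}-1\rangle$ (meaning $\sum_{(y,z)\in E}\mu_t(y)r(y,z)(e^{g(y,z)}-1)$), a Chebyshev estimate along $M^g$ gives for every closed $\mc C$
\begin{equation*}
  \varlimsup_{T\to\infty}\frac1T\log\bb P_x\big((\mu_T,Q_T)\in\mc C\big)\;\le\;-\inf_{(\mu,Q)\in\mc C}\Big\{\langle Q,g\rangle-\langle Q^\mu,e^{g}-1\rangle\Big\}.
\end{equation*}
Optimising over $g$ edge by edge, the variational formula $\Phi(q,p)=\sup_{s}\{qs-p(e^s-1)\}$ recalled after \eqref{Phi} yields $\sup_g\{\langle Q,g\rangle-\langle Q^\mu,e^g-1\rangle\}=\sum_{(y,z)}\Phi(Q(y,z),Q^\mu(y,z))$ when $\div Q=0$. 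To produce the value $+\infty$ when $\div Q\ne0$ I would restrict to gradient tilts $g(y,z)=\lambda\,(f(z)-f(y))$ with $f$ finitely supported: then $\langle Q,g\rangle=-\langle\div Q,f\rangle$, while the telescoping bound $|\div Q_T(y)|\le2/T$ makes the compensator contribution negligible as $T\to\infty$, so letting $\lambda\to\pm\infty$ forces the bound to $-\infty$. This identifies the upper--bound functional with $I$ of \eqref{rfq}.

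The next step is exponential tightness and goodness of $I$, and this is exactly what Condition $C(\sigma)$ provides. From the $u_n$'s one builds the local martingale $u_n(X_t)\exp\{\int_0^t v_n(X_s)\,ds\}$; combining (ii), (iii), Fatou, and $v_n\to v\ge\sigma r-C$ one gets $\bb E_x[\exp\{\sigma T\langle\mu_T,r\rangle\}]\le\mathrm{const}\cdot e^{CT}$. Comparing with the martingale whose compensator is $\int_0^T r(X_s)\,ds$ (controlling the total number of jumps) upgrades this to an exponential bound on the total mass $\|Q_T\|$. Since the compact level sets in (v) give exponential tightness of $\{\mu_T\}$ in $\mc P(V)$, and since a subset of $L^1_+(E)$ is bounded weak* precompact once it has bounded $L^1$--norm — here the ballwise description of the bounded weak* topology recalled above is essential — one obtains exponential tightness of $\{(\mu_T,Q_T)\}$ on $\mc P(V)\times L^1_+(E)$. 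This makes $I$ good and promotes the upper bound from compacts to arbitrary closed sets: intersect $\mc C$ with $\{\|Q\|\le\ell\}$, run a finite covering argument with the bounds $\langle Q,g\rangle-\langle Q^\mu,e^g-1\rangle$ on the metrizable ball, and let $\ell\to\infty$.

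For the lower bound it suffices to verify \eqref{lbldp} at every $(\mu,Q)$ in an $I$--dense set, e.g. with $\mu$ and $Q$ finitely supported, $\div Q=0$, $Q(y,z)>0$ on the edges carrying $Q$, and $I(\mu,Q)<\infty$; the general case follows from lower semicontinuity of $I$ together with an approximation preserving $\div Q=0$. For such a pair I would define tilted rates $\tilde r(y,z):=Q(y,z)/\mu(y)$ on $\supp\mu$, keeping the original rates elsewhere so that irreducibility and non--explosion persist; the constraint $\div Q=0$ makes $\mu$ invariant for $\tilde r$, and under the corresponding law $\tilde{\bb P}_x$ the ergodic theorem forces $(\mu_T,Q_T)\to(\mu,Q)$, hence $\tilde{\bb P}_x(\mc A)\to1$ for any neighbourhood $\mc A$. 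The Radon--Nikodym derivative $d\tilde{\bb P}_x/d\bb P_x$ on $\mc F_T$ has the exponential form above with $g=\log(\tilde r/r)$, and its relative entropy equals $T\sum_{(y,z)}\Phi(Q(y,z),Q^\mu(y,z))+o(T)=T\,I(\mu,Q)+o(T)$; the entropy/change--of--measure inequality then gives $\varliminf_T\frac1T\log\bb P_x(\mc A)\ge-I(\mu,Q)$.

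The main obstacles I expect are, first, handling the infinitely many flow coordinates in a non--metrizable topology: Condition $C(\sigma)$ is precisely what yields exponential tightness in $L^1_+(E)$ and the control $\langle\mu_T,r\rangle<+\infty$ that makes $Q^\mu$ meaningful, and its use is delicate (localisation of the martingale, passage to the limit $n\to\infty$). Second, in the upper bound, obtaining the hard constraint $\div Q=0$ when $\mu$ has infinite support and $r$ is unbounded requires care with the gradient tilts, so that the compensator $\langle Q^\mu,e^g-1\rangle$ stays finite and controllable while $\lambda\to\infty$. On the lower--bound side the corresponding difficulty is approximating a general finite--entropy $(\mu,Q)$ by finitely supported divergence--free flows along which $I$ converges, and checking that the tilted chain remains ergodic and non--explosive.
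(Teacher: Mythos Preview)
This theorem is not proved in the present paper: it is quoted from \cite{BFG}, and Section~\ref{s:ldef} only recalls the statement together with the assumptions. So strictly speaking there is no proof here to compare against. Your outline is, however, essentially the strategy that \cite{BFG} carries out: exponential martingales $M^g_T$ for the upper bound (their Lemma~3.1, used in the present paper in the proof of Theorem~\ref{freedom}), exponential tightness of $(\mu_T,Q_T)$ drawn from Condition~$C(\sigma)$ via the Lyapunov functions $u_n$ (their Lemma~3.5 and Proposition~3.6, invoked here for \eqref{cyril} and in the proof of Proposition~\ref{bresaola}), and a tilting/relative--entropy argument for the lower bound (their Section~5, referred to in Proposition~\ref{luglio}).

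One step in your sketch is garbled: the way you encode the constraint $\div Q=0$ in the upper bound. For a gradient tilt $g(y,z)=\lambda\,(f(z)-f(y))$ the compensator $\langle Q^\mu,e^{g}-1\rangle$ is \emph{not} negligible; it diverges as $|\lambda|\to\infty$. What the telescoping identity $T\,\div Q_T(y)=\delta_x(y)-\delta_{X_T}(y)$ makes small is the \emph{action} term $\langle Q_T,g\rangle=-\lambda\langle\div Q_T,f\rangle=O(1/T)$, not the compensator. The correct mechanism is therefore not a Chebyshev bound with gradient $g$, but the deterministic observation that for every finitely supported $f$ and every $\epsilon>0$ one has eventually $(\mu_T,Q_T)\in\{|\langle\div Q,f\rangle|\le\epsilon\}$; since this set is closed in the bounded weak* topology, you may intersect $\mc C$ with it for free before running the martingale bound with general $g$, and only afterwards let $\epsilon\downarrow0$ and vary $f$ to force $\div Q=0$ on the effective domain of the rate. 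Apart from this slip your plan matches \cite{BFG}.
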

 We point out that Condition $C(\sigma)$ with $\sigma>0$
implies that $\langle\pi ,r\rangle<+\infty$ (cf. \cite{BFG}[Lemma 3.9]) and that  $r(\cdot)$ has compact level sets (cf. \cite{BFG}[Remark 2.3]). Moreover,
Condition $C(0)$ (i.e. $C(\sigma)$ with $\sigma=0$) with (i) replaced by the fact that $u_n $ belongs to the domain of the infinitesimal generator of the Markov chain $(\xi_t)_{t \in \bb R_+}$, and with $Lu_n$ defined as the infinitesimal generator applied to $u_n$, is the condition under which the large deviation of the empirical measure is derived in \cite{DV4}--(IV). Finally, see \cite{BFG}[Section 2.3], it holds $I(\mu,Q)=0$ if and only if $\mu= \pi$ and $Q= Q^\pi$ and Theorem \ref{LDP:misura+flusso}  implies that the empirical flow $Q_T$, sampled  according to $\bb P_x$, converges    to $Q^\pi $ in $L^1_+(E)$ (endowed of the bounded weak* topology).

\begin{remark}
As discussed in \cite{BFG} Theorem \ref{LDP:misura+flusso} holds also replacing Condition $C(\sigma)$, $\sigma>0$, with a suitable hypercontractivity assumption (see Condition 2.4 there). Also the results we present in the rest of the present article could be obtained under this alternative assumption.
\end{remark}

\section{Comments on the main assumptions}\label{ragnatela}

We first recall some basic facts from
\cite{N}[Chapter 3]. 
Assuming (A1) and irreducibility (A3), assumptions (A2) and (A4)
together are equivalent to the fact that all states are positive
recurrent.
  In (A4) one could remove the assumption of
uniqueness of the invariant probability measure, since for an
irreducible Markov chain there can be at most  one.
 We observe that if $V$ is finite then
(A1) and (A2) are automatically satisfied, while (A3) implies (A4).

\begin{proposition}\label{bresaola}
If the Markov chain satisfies assumptions (A1), (A2), (A3) and Condition $C( \sigma)$ for some $\sigma \geq 0$, then   (A4) is verified.
\end{proposition}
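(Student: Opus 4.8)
The plan is to turn the sequence $(u_n)$ of Condition $C(\sigma)$ into a Lyapunov function yielding a Foster-type drift inequality outside a finite set, and then to deduce positive recurrence of the chain, which in particular gives an invariant probability measure; uniqueness is automatic by irreducibility, as already recalled, and we may assume $V$ infinite, the finite case being the already-noted implication (A3)$\Rightarrow$(A4). First I would observe that $c\le u_n(x)\le C_x$ for all $n$ and $x$ by (ii)--(iii), so a diagonal extraction over the countable set $V$ produces a subsequence $(u_{n_k})$ converging pointwise to some $u\colon V\to[c,+\infty)$ with $u(x)\le C_x<+\infty$ for every $x$. By (iv) and the definition $v_n=-Lu_n/u_n$ one has $Lu_{n_k}(x)=-v_{n_k}(x)u_{n_k}(x)\to -v(x)u(x)$; writing $\sum_y r(x,y)u_{n_k}(y)=Lu_{n_k}(x)+r(x)u_{n_k}(x)$ and using (A1) together with Fatou's lemma on this nonnegative series gives
\[
\sum_{y\in V}r(x,y)\,u(y)\ \le\ \liminf_{k\to\infty}\sum_{y\in V}r(x,y)\,u_{n_k}(y)\ =\ \big(r(x)-v(x)\big)u(x)\ <\ +\infty ,
\]
so that $Lu(x)=\sum_y r(x,y)u(y)-r(x)u(x)$ is well defined and $Lu(x)\le -v(x)u(x)$ for all $x\in V$.

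Next I would set $K:=\{x\in V:\ v(x)\le 1\}$, finite by (v). For $x\notin K$ we get $Lu(x)\le -v(x)u(x)\le -u(x)\le -c<0$, while on the finite set $K$ the summability just proved makes $Lu$ finite at each point, so $b:=\max_{x\in K}Lu(x)<+\infty$; thus $u\ge c>0$ is finite-valued with $Lu\le -c$ on $V\setminus K$ and $Lu\le b$ on $K$. Let $\tau_K$ be the hitting time of $K$. Applying Dynkin's formula to $u$ localized along the exit times from a finite exhaustion $V_m\uparrow V$, using $u\ge c$ and letting $m\to\infty$ (no explosion, by (A2)) and then $t\to\infty$ yields $c\,\bb E_x[\tau_K]\le u(x)-c<+\infty$ for $x\notin K$. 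A first-jump decomposition and the strong Markov property then give, for $x\in K$,
\[
\bb E_x\big[\tau_K^{+}\big]\ =\ \frac{1}{r(x)}+\sum_{y\notin K}\frac{r(x,y)}{r(x)}\,\bb E_y[\tau_K]\ \le\ \frac{1}{r(x)}+\frac{1}{c\,r(x)}\sum_{y\in V}r(x,y)\,u(y)\ <\ +\infty ,
\]
where $\tau_K^{+}$ is the first return time to $K$ and I use once more the summability above. Hence $K$ is a finite set visited from every state, and re-visited after leaving it, in finite mean time; since the chain is irreducible (A3) and non-explosive (A2), the standard regeneration argument at successive visits to $K$ produces a finite invariant measure, hence an invariant probability measure (see \cite{N}[Chapter~3]). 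This is (A4).

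The one delicate point is that (iv) only gives pointwise, non-uniform convergence $v_n\to v$, so no single $u_n$ can serve as a Lyapunov function outside a \emph{fixed} finite set; this is what forces the passage to the limit $u$, and the price is that Fatou yields only the inequality $Lu\le -vu$, which nonetheless suffices. The remaining care is routine: Dynkin's formula for the unbounded function $u$ must be localized by the exit times $\tau_{V_m}$ and relies on non-explosion. I would also expect only items (i)--(v) of Condition $C(\sigma)$ to be needed — item (vi) and the value of $\sigma$ play no role — consistently with the statement holding for all $\sigma\ge 0$.
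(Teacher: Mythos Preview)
Your proof is correct and takes a genuinely different route from the paper's. The paper argues by compactness of Ces\`aro averages: it defines $\pi_n(A)=\bb E_x[\mu_n(A)]$, invokes an estimate from \cite{BFG}[Prop.~3.6] (derived there from Condition~$C(\sigma)$ without using (A4)) to show that $\{\pi_n\}$ is tight in $\mc P(V)$, extracts a weak limit $\pi$ by Prohorov, and then checks by a simple time--shift identity that $\pi P(s)=\pi$ for all $s>0$; finally, standard results from \cite{N} upgrade this to algebraic invariance \eqref{invariante} and positive recurrence. You instead pass from the sequence $(u_n)$ to a pointwise limit $u$ via diagonal extraction, use Fatou to obtain the one--sided drift inequality $Lu\le -v\,u$, and run the Foster--Lyapunov machinery ($Lu\le -c$ off the finite set $\{v\le 1\}$, Dynkin with localization, finite mean return to $K$, regeneration). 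Your approach is more self--contained in that it does not import the tightness estimate from \cite{BFG}, and it makes transparent --- as you observe --- that item (vi) of Condition~$C(\sigma)$ plays no role; it also produces an explicit Lyapunov function $u$ with $Lu\le -v\,u$, which is of independent interest. The paper's approach, on the other hand, stays closer to the paper's theme (empirical measures) and is shorter once the cited estimate is in hand. Two minor points worth recording: your Fatou step implicitly uses $v_n\le r$ (which follows from $\sum_y r(x,y)u_n(y)\ge 0$) to ensure $(r(x)-v(x))u(x)\ge 0$ and finite; and if $\{v\le 1\}$ happens to be empty one should replace the threshold $1$ by any $\ell$ with $\{v\le\ell\}\neq\emptyset$, which exists since $v\le r$ pointwise.
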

\begin{proof}
The core of the proof will consist in showing  that there exists a probability measure $\pi $ on $V$ such that $\pi P(s) = \pi$ for some $s>0$, where $P(s) $ is the $V\times V$--matrix such that $P_{y,z}(s)= \bb P_y(\xi_s=z)$.

Before proving this property, let us explain how to deduce  that $\pi$ is invariant in the algebraic sense \eqref{invariante} (as already stressed, uniqueness in (A4) is a consequence of (A3)). Due to \cite{N}[Th. 3.5.5] we only need to prove that the Markov chain $\xi$ is recurrent. To this aim, consider the discrete time Markov chain $\zeta_n:= \xi_{n s} $ with associated stochastic
matrix $P(s)$. Note that the irreducibility of $\xi$ implies the irreducibility of $\zeta$ and that the  condition $\pi P(s)=\pi$ corresponds to the fact that $\pi$ is an invariant distribution for $\zeta$. Hence,
 due to \cite{N}[Th. 1.7.7],  each state is positive recurrent  for the Markov chain $\zeta$ and therefore is recurrent for the Markov chain $\xi$.

   It remains to exhibit $\pi \in \mathcal{P}(V)$ such that $\pi P(s) = \pi$.
 To this aim, we fix $x \in V$ and,  given an integer $n \geq 1$, we define $\pi _n  \in \mc P (V)$ as  $\pi_n (A)= \bb E_x ( \mu_n (A) )$ for all $A \subset V$ ($\mu_n$ denotes the empirical measure at time $n$). We claim that, due to Condition $C(\sigma)$, the sequence $\{\pi_n\}_{n \geq 1}$ is tight in $\mc P(V)$.   In the proof of Proposition  3.6 in \cite{BFG} we have deduced (without using (A4)) that for each $\ell \geq 1$ there exists a finite set $K_\ell \subset V$ such that  $\lim _{n \to \infty} \bb P_x\left( \mu_n(K_\ell^c) > \frac{1}{\ell}\right )=0$. Since
 \begin{equation*}
 \begin{split} \pi_n (K_\ell^c) =  \bb E_x ( \mu_n (K_\ell^c) )& \leq \frac{1}{\ell} \bb P_x \left( \mu_n (K_\ell^c) \leq \frac{1}{\ell}\right)+  \bb P_x \left( \mu_n (K_\ell^c) > \frac{1}{\ell}\right)\\
 & \leq \frac{1}{\ell} + \bb P_x \left( \mu_n (K_\ell^c) > \frac{1}{\ell}\right)\,,
 \end{split}
 \end{equation*}
 it is simple to obtain that the sequence $\{\pi_n\}_{n \geq 1}$ is tight in $\mc P(V)$. By
  Prohorov theorem (cf. \cite{Bi}[Theorem 5.1])  the sequence is relatively compact, and therefore there exists a subsequence   $n_k \nearrow \infty$ and a probability measure $\pi$ in $\mc P(V)$ such that $\pi_{n_k}$ converges weakly to $\pi$. Let us show that for any $s>0$ it holds $\pi P(s)= \pi$. To this aim we show that $\langle \pi, P(s) f \rangle= \langle \pi, f \rangle$ for any  bounded function $f : V \to \bb R$.
 Since $P(s) f : V \to \bb R$ is bounded and continuous, by the weak convergence we can write
 \begin{equation}\label{flauto} \langle \pi, P(s) f \rangle= \lim _{k \to \infty}  \langle \pi_{n_k
}, P(s) f \rangle\,.
\end{equation}
 On the other hand, given $g:V \to \bb R$ bounded it holds
\[
 \pi_{n} (g) =\bb E_x\left(  \frac{1}{n}\int_0^n g(X_u)du \right)=  \frac{1}{n}\int_0^n \bb E_x \left( g(X_u) \right)du=  \frac{1}{n}\int_0^n  \left[P(u) g \right] (x)du\,.
 \]
 In particular, by using  the above identity twice (both with $g:= P(s)f$ and with r $g:=f$) and using the semigroup property $P(u)P(s)=P(u+s)$, we have
 \begin{equation}\label{giustino}
 \begin{split}
  \langle \pi_{n }, P(s) f \rangle
 &= \langle \pi_n, f \rangle -\frac{1}{n}\int_0^{s} \left[P(u) f\right] (x)du +\frac{1}{n}\int_n^{n+s} \left[P(u) f\right] (x)du\\
 & = \langle \pi_n, f \rangle +O\left(\frac{s}{n}\right) \,.
 \end{split}
\end{equation}
By setting   $n:=n_k$  in \eqref{giustino} and afterwards taking the limit $k\to +\infty$, 
 from the weak convergence of $\pi_{n_k}$ to $\pi$ we conclude that  \eqref{flauto} equals $\langle \pi, f \rangle $.
  \end{proof}


\section{Joint LDP for the empirical measure and flow in the strong $L^1_+(E)$ topology}\label{agostino}

As stated in  Theorem 2.7.2. in \cite{Me},   the bounded weak* topology is weaker than the strong topology in $L^1
_+(E)$, i.e. the one coming from the $L^1$--norm. This means that any  bounded  weakly* open (closed) set is also strongly open (closed).

\begin{proposition}\label{luglio}   Under the same hypotheses of 
 Theorem
 \ref{LDP:misura+flusso}  
   a weak\footnote{By weak joint LDP we mean that  \eqref{ubldp} and \eqref{lbldp} are valid for any $\mc C$ compact and any $\mc A$ open} joint LDP  for $(\mu_T, Q_T)$  holds with  the strong topology on  $L^1_+(E)$.
   \end{proposition}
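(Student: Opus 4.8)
The plan is to deduce the weak LDP in the strong $L^1_+(E)$ topology directly from Theorem~\ref{LDP:misura+flusso}, treating the two bounds separately. Denote by $\tau_*$ the product of the weak topology on $\mc P(V)$ with the bounded weak* topology on $L^1_+(E)$ (as in Theorem~\ref{LDP:misura+flusso}), and by $\tau_s$ the product of the weak topology on $\mc P(V)$ with the strong $L^1$ topology on $L^1_+(E)$; thus $\tau_*\subseteq\tau_s$ by Theorem~2.7.2 in \cite{Me}. First one disposes of the upper bound on $\tau_s$-compact sets. The topology $\tau_*$ is Hausdorff: the weak topology on $\mc P(V)$ is metrizable since $V$ is countable, while the bounded weak* topology refines $\sigma(L^1(E),C_0(E))$, which is Hausdorff because $C_0(E)$ separates the points of $L^1(E)$. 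Hence a $\tau_s$-compact set $\mc C$ is $\tau_*$-compact (since $\tau_*\subseteq\tau_s$), and therefore $\tau_*$-closed; applying the upper bound \eqref{ubldp} of Theorem~\ref{LDP:misura+flusso} to $\mc C$ gives $\varlimsup_T T^{-1}\log\bb P_x((\mu_T,Q_T)\in\mc C)\le -\inf_{\mc C}I$, which is exactly what is required.

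The lower bound on $\tau_s$-open sets is the real content, and here the purely topological argument breaks down: a strong $L^1$-ball around a flow $Q$ contains no $\tau_*$-neighbourhood of $Q$, so \eqref{lbldp} cannot simply be restricted, and the family is not assumed exponentially tight in $\tau_s$ (this is precisely what forces the extra hypotheses of Theorem~\ref{freedom}), so the sublevel-set/inverse-contraction route is also unavailable. Instead one revisits the change-of-measure proof of the lower bound in \cite{BFG}. It suffices to show that for every $(\mu,Q)$ with $I(\mu,Q)<+\infty$ and every $\tau_s$-neighbourhood $\mc N$ of $(\mu,Q)$ one has $\varliminf_T T^{-1}\log\bb P_x((\mu_T,Q_T)\in\mc N)\ge -I(\mu,Q)$; exactly as in \cite{BFG}, by a lower-semicontinuity and density argument this can be checked only on a dense class of pairs $(\mu,Q)$ for which the Markov chain $\tilde\xi$ with tilted rates $\tilde r(y,z):=Q(y,z)/\mu(y)$ is irreducible, non-explosive, and has $\mu$ as its unique invariant measure --- the last point using $\div Q=0$.

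The one ingredient that is genuinely new with respect to \cite{BFG} is that, under the tilted law $\tilde{\bb P}_x$, the convergence of $(\mu_T,Q_T)$ takes place in the strong $L^1$ topology. Under $\tilde{\bb P}_x$ the ergodic theorem gives $\mu_T\to\mu$ and $Q_T(y,z)\to\tilde\pi(y)\tilde r(y,z)=Q(y,z)$ a.s.\ for each $(y,z)\in E$, while $\|Q_T\|$ is $T^{-1}$ times the total number of jumps in $[0,T]$ and, by the ergodic theorem together with a martingale law-of-large-numbers estimate, converges a.s.\ to $\langle\tilde\pi,\tilde r\rangle=\sum_{(y,z)\in E}Q(y,z)=\|Q\|<+\infty$ (here one uses that $\tilde\xi$ is non-explosive and $Q\in L^1_+(E)$). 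Coordinatewise convergence together with convergence of the total masses is, by Scheffé's lemma, convergence in $L^1(E)$; hence $(\mu_T,Q_T)\to(\mu,Q)$ $\tilde{\bb P}_x$-a.s.\ in $\tau_s$, and in particular $\tilde{\bb P}_x((\mu_T,Q_T)\in\mc N)\to1$. Inserting this into the change-of-measure inequality of \cite{BFG}, whose relative entropy per unit time of $\tilde{\bb P}_x$ with respect to $\bb P_x$ on $[0,T]$ tends to $\sum_e\Phi(Q(e),Q^\mu(e))=I(\mu,Q)$, produces $\varliminf_T T^{-1}\log\bb P_x((\mu_T,Q_T)\in\mc N)\ge -I(\mu,Q)$, which is the lower bound. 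The main obstacle is concentrated in this last step: controlling the total number of jumps per unit time under the tilted law and keeping the tilted chain inside a class on which it is non-explosive with $\langle\tilde\pi,\tilde r\rangle<+\infty$, so that the ergodic/martingale argument for $\|Q_T\|$ is legitimate and Scheffé's lemma applies; the reduction of a general $(\mu,Q)$ in the domain of $I$ to such a class is identical to the one in \cite{BFG} and adds nothing new, $\tau_s$-neighbourhoods of the approximating pairs being contained in $\tau_s$-neighbourhoods of $(\mu,Q)$.
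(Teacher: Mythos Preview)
Your argument is correct and follows the same skeleton as the paper's: the upper bound on $\tau_s$-compact sets comes from the Hausdorffness of the bounded weak* topology, and the lower bound is obtained by re-running the tilting proof of \cite[Sec.~5]{BFG} and checking it survives the passage to strong $L^1$-neighbourhoods. The paper itself is terser --- it simply asserts that the \cite{BFG} proof ``works also for strongly open sets'' --- so your write-up is in effect the verification the paper omits.

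The two accounts emphasise different checkpoints in that verification. The paper singles out the map $Q\mapsto\langle\phi,\div Q\rangle=-\langle\nabla\phi,Q\rangle$: in \cite{BFG} its continuity in the bounded weak* topology required each vertex to have only finitely many incident edges, whereas in the strong $L^1$ topology continuity is automatic, so that graph-regularity hypothesis can be dropped. You instead make explicit the step where strong convergence of $Q_T$ under the tilted law is needed, and supply it via Scheff\'e's lemma from coordinatewise convergence plus $\|Q_T\|\to\|Q\|$; this is exactly the content hidden behind the paper's one-line assertion, and it is the right argument. One small point to tighten: when you say the density reduction is ``identical to the one in \cite{BFG}'', you are implicitly using that the approximating pairs $(\mu_n,Q_n)$ constructed there converge to $(\mu,Q)$ in $\tau_s$, not merely in $\tau_*$ --- otherwise a $\tau_s$-neighbourhood of $(\mu,Q)$ need not eventually contain them. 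This is true for the explicit approximations in \cite{BFG} (they are essentially $L^1$-truncations plus small divergence corrections), but it is worth saying so.
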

   \begin{proof} Since any strongly compact subset of $L^1_+(E)$ is bounded weak* compact and therefore bounded weak* closed (as the bounded weak* topology is Hausdorff), the  upper bound for 
   strongly compact subsets is a direct consequence of \eqref{ubldp}.

     On the other hand,  one can verify that the direct proof in \cite{BFG}[Sec. 5]  of  the lower bound \eqref{lbldp} works also for strongly open set. In addition, working with the strong topology,  one has not to require  that each vertex in $V$ is the extreme of only a finite family of edges in $E$  as in \cite{BFG}.
   Indeed, this assumption was necessary in \cite{BFG} to assure that, given a function $\phi :V \to \bb R$ vanishing at infinity and defining $\nabla \phi: E \to \bb R$ as $\nabla \phi (y,z)= \phi(z)- \phi(y)$, then
  the map  
  \begin{equation*}L^1_+(E) \ni Q \mapsto \langle \phi, \div Q \rangle = - \langle \nabla \phi, Q \rangle \in \bb R
  \end{equation*}
  is continuous when  $L^1_+(E)$  is endowed with  the bounded weak* topology. 
  The above map is automatically continuous in the strong topology.
  \end{proof}


We now describe a criterion implying  the (full)   joint  LDP  for $(\mu_T, Q_T)$ when $L^1_+(E)$ is  endowed with the strong topology. To this aim, given $E'\subset E$, we define $Q(E') = \sum _{(y,z) \in E'} Q(y,z) $. Moreover,  fixed a subset $\hat E \subset E$,
we define  the $\hat{E}$--dependent function $H: V \mapsto \bb R$  as
\begin{equation}\label{defH}
H(y):= \frac{ \sum _{z: (y,z)\in \hat E} r(y,z) }{  \sum _{z: (y,z)\in  E} r(y,z)}\,.
\end{equation}
Given $a\in (0,1)$ suppose that $H(y) < a$. Then, after arriving in $y$, the Markov chain has probability $H(y)<a$ to jump from $y$ along an edge in $\hat E$. We then call $a$--\emph{unlikely} all edges $(y,z)$ with $H(y) <a$ and $(y,z) \in \hat E$, while we call $a$--\emph{likely} all edges $(y,z)$ with $H(y) <a$ and $(y,z) \in E \setminus \hat E$.


\begin{theorem}\label{freedom}  Assume  Assumptions (A1), (A2), (A3) and Condition $C(\sigma)$  with $\sigma >0$.  
Suppose  there exists a subset $\hat E \subset E$  such that
\begin{itemize}

\item[(i)] for each $y \in V$ there  exists $z\in V$ with $(y,z) \in \hat E$;

\item[(ii)] the function $H : V \to (0, +\infty)$ defined in \eqref{defH} vanishes at infinity;

\item[(iii)] fixed any  $x \in V$, there exist constants $a_0, \gamma >0$ such that for any $a <a_0$
one can find a subset $W= W(x,a)$ in $E$ satisfying the following properties:

\begin{itemize}
\item[(1)] the complement $E \setminus W$
 is finite;

 \item[(2)] each edge in $W$ is $a$--likely or $a$--unlikely, i.e.\
if $ (y,z) \in W$ then  $ H(y) < a $;
 \item[(3)] for each path exiting from $x$ the number of $a$--unlikely  edges in $W$ is at least $\gamma$--times the total number of edges in $W$. Namely,
  for any path
 $x_1,x_2, \dots, x_n$  with
 $x_1=x$ and $(x_i,x_{i+1} ) \in E$
  it holds
 \begin{equation}\label{broccolo}  \sharp\Big \{ i: (x_i , x_{i+1})  \in \hat E \cap W  \Big\}\, \geq \, \gamma\, \sharp \Big\{ i: (x_i , x_{i+1})  \in W\Big\} \,.
 \end{equation}

 \end{itemize}


\end{itemize}
Then   Theorem \ref{LDP:misura+flusso}  remains valid if $L^1_+(E)$ is endowed with the strong topology instead of the bounded weak* topology.
\end{theorem}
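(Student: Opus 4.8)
The plan is to combine the weak large deviation principle already in hand from Proposition~\ref{luglio} with an exponential tightness estimate, the only genuinely new ingredient being a bound on the $L^1$--norm tails of $Q_T$ that exploits the set $\hat E$.

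\emph{Reduction to exponential tightness.} Since the chain satisfies (A1), (A2), (A3) and Condition $C(\sigma)$, assumption (A4) holds by Proposition~\ref{bresaola}, so Theorem~\ref{LDP:misura+flusso} and Proposition~\ref{luglio} apply. By Proposition~\ref{luglio}, $\{\bb P_x\circ(\mu_T,Q_T)^{-1}\}$ satisfies the weak LDP with rate function $I$ on $\mc P(V)\times L^1_+(E)$ with the strong topology on the second factor. A weak LDP together with exponential tightness upgrades to a full LDP with a good rate function, which by uniqueness must be $I$; hence it suffices to prove exponential tightness of $\{\bb P_x\circ(\mu_T,Q_T)^{-1}\}$ in $\mc P(V)\times L^1_+(E)$, the second factor carrying the norm topology. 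Exponential tightness in the factor $\mc P(V)$ is already available from \cite{BFG} (a consequence of Condition $C(\sigma)$, $\sigma>0$, in particular of the compactness of the level sets of $v$). For $L^1_+(E)\subset\ell^1(E)$ we use the classical norm--compactness criterion: a bounded subset is relatively compact iff it is uniformly summable, i.e.\ $\sup_Q Q(E\setminus E')\to0$ along any exhaustion of $E$ by finite sets. Thus exponential tightness in $L^1_+(E)$ follows once we establish: \emph{(a)} $\varlimsup_{T}\frac 1T\log\bb P_x(\|Q_T\|>\ell)\to-\infty$ as $\ell\to\infty$; and \emph{(b)} for every $\delta,M>0$ there is a finite $E'\subset E$ with $\varlimsup_{T}\frac 1T\log\bb P_x(Q_T(E\setminus E')>\delta)\le-M$. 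Given (a) and (b), for a target $M$ one fixes a radius $\ell^{(M)}$ from (a) and finite sets $E^{(M)}_j$ from (b) with $\delta=1/j$ and rate $M+j$; the set $K^{(M)}:=K_0^{(M)}\times\{Q:\|Q\|\le\ell^{(M)},\ Q(E\setminus E^{(M)}_j)\le1/j\ \forall j\}$ (with $K_0^{(M)}$ compact in $\mc P(V)$) is compact, and a union bound, summable over $j$ by the explicit constants below, gives $\varlimsup_T\frac 1T\log\bb P_x((\mu_T,Q_T)\notin K^{(M)})\le-M$.

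\emph{Item (a).} This is the exponential--martingale estimate available under $C(\sigma)$, $\sigma>0$, exactly as in \cite{BFG}: the compensated counting process $N_t-\int_0^t r(\xi_s)\,ds$ is a martingale, so Dynkin's formula gives $\bb E_x[e^{\theta N_T}]=\bb E_x[\exp((e^\theta-1)\int_0^T r(\xi_s)\,ds)]$, while the nonnegative supermartingale $\frac{u_n(\xi_t)}{u_n(\xi_0)}\exp(\int_0^t v_n(\xi_s)\,ds)$ together with properties (ii), (iii), (vi) of $C(\sigma)$ bounds $\bb E_x[\exp(\sigma\int_0^T r(\xi_s)\,ds)]\le\frac{u(x)}{c}e^{CT}$; taking $\theta$ with $e^\theta-1\le\sigma$ and applying Markov's inequality gives $\bb P_x(\|Q_T\|>\ell)\le\frac{u(x)}{c}e^{(C-\theta\ell)T}$, which has an honest constant and is summable over dyadic scales as needed above.

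\emph{Item (b).} Fix $\delta,M>0$, use (a) to pick $\ell$ with $\varlimsup_T\frac 1T\log\bb P_x(\|Q_T\|>\ell)\le-M$, choose $a<a_0$ small (to be fixed at the end), let $W=W(x,a)$ be the set from (iii), and set $E':=E\setminus W$, which is finite by (iii)(1). Pass to the embedded jump chain $x=Y_0,Y_1,\dots$ of $\xi$. By \eqref{defH} the number $H(y)$ is precisely the probability that from $y$ the jump chain moves along an edge of $\hat E$; it is positive by (i), and by (iii)(2) every tail of an edge of $\hat E\cap W$ satisfies $H(y)<a$. Hence, writing $D_k:=\mathds{1}\big((Y_{k-1},Y_k)\in\hat E\cap W\big)$, we have $\bb E_x[D_k\mid Y_0,\dots,Y_{k-1}]<a$ for all $k$, so
\begin{equation*}
M_n^\lambda:=\exp\Big(\lambda\sum_{k=1}^n D_k-n\,a(e^\lambda-1)\Big),\qquad\lambda\ge0,
\end{equation*}
is a nonnegative supermartingale with $M_0^\lambda=1$. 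On the event $\{\|Q_T\|\le\ell\}\cap\{Q_T(W)>\delta\}$, writing $N_T=T\|Q_T\|$ for the number of jumps in $[0,T]$, the trajectory $Y_0,\dots,Y_{N_T}$ has more than $\delta T$ edges in $W$, hence by (iii)(3) more than $\gamma\delta T$ edges in $\hat E\cap W$, so $\sum_{k=1}^{N_T}D_k>\gamma\delta T$; since $N_T\le\ell T$ this gives $\sum_{k=1}^{N_T}D_k>\beta N_T$ with $\beta:=\gamma\delta/\ell$, while $N_T\ge TQ_T(W)>\delta T$. Therefore $M_{N_T}^\lambda\ge\exp\big(N_T(\lambda\beta-a(e^\lambda-1))\big)\ge e^{\rho\delta T}$ with $\rho:=\sup_{\lambda\ge0}\{\lambda\beta-a(e^\lambda-1)\}=\beta\log(\beta/a)-\beta+a>0$. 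Since $N_T\le\lceil\ell T\rceil$, Doob's maximal inequality for $(M_n^\lambda)$ yields
\begin{equation*}
\bb P_x\big(\|Q_T\|\le\ell,\ Q_T(W)>\delta\big)\le\bb P_x\Big(\max_{n\le\lceil\ell T\rceil}M_n^\lambda\ge e^{\rho\delta T}\Big)\le e^{-\rho\delta T}.
\end{equation*}
Combining with (a), $\varlimsup_T\frac 1T\log\bb P_x(Q_T(E\setminus E')>\delta)\le\max\{-M,-\rho\delta\}$; since $\rho\to+\infty$ as $a\to0$, one chooses $a<a_0$ so that $\rho\delta\ge M$, which proves (b).

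\emph{Main obstacle.} The delicate point is (b): one must recognise that edges of $\hat E\cap W$ are traversed with conditional probability below $a$ — which is exactly what (iii)(2) and the notion of $a$--unlikely edge encode — use (iii)(3) to convert an atypically large $L^1$--mass of $Q_T$ on $W$ into an atypically large \emph{count} of $\hat E\cap W$--jumps, and use the activity bound (a) (hence $C(\sigma)$ with $\sigma>0$) to ensure this count is a genuine positive fraction of all jumps, so the supermartingale argument bites. The remaining care concerns the order in which constants are fixed ($\ell$ before $a$) and the standard passage from continuous time to the jump chain via Doob's inequality at the deterministic horizon $\lceil\ell T\rceil\ge N_T$. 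Hypotheses (i) and (ii) play an auxiliary role, making the splitting of $E$ into $a$--likely/$a$--unlikely edges meaningful (so that $H>0$ and the sets $\{H\ge a\}$ are finite, which is what lets the complement of $W$ in (iii)(1) be finite).
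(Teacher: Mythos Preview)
Your argument is correct and takes a genuinely different route from the paper in the core step (your Item~(b)). One caveat: the identity you write in Item~(a), $\bb E_x[e^{\theta N_T}]=\bb E_x[\exp((e^\theta-1)\int_0^T r(\xi_s)\,ds)]$, is not true in general (the exponential martingale gives $\bb E_x[e^{\theta N_T-(e^\theta-1)A_T}]\le 1$, not the factorisation you claim). The bound you need on $\bb P_x(\|Q_T\|>\ell)$ does hold and is exactly \cite{BFG}[Prop.~3.6]; it follows by combining the supermartingale inequality with $\bb E_x[e^{\sigma A_T}]\le C_x e^{CT}$ via Cauchy--Schwarz, losing only a factor $2$ in $\theta$. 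With that correction, your scheme goes through.

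\medskip

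\emph{Comparison with the paper.} Both proofs reduce to exponential tightness of $Q_T$ in $L^1_+(E)$ and both use hypothesis (iii)(3) to transfer control from $Q_T(\hat E\cap W)$ to $Q_T(W)$. The difference lies in how $Q_T(\hat E\cap W)$ is bounded. The paper works in continuous time: it introduces the tilt $F(y,z)=\log(\lambda/H(y))$ on $\tilde E=\{(y,z)\in\hat E:H(y)\le\lambda\}$, checks $r^F-r\le\lambda r$, uses the supermartingale $\bb E_x[e^{T\langle Q_T,F\rangle-T\langle\mu_T,r^F-r\rangle}]\le 1$ from \cite{BFG}[Lemma~3.1], and then separates $\langle Q_T,F\rangle$ from $\langle\mu_T,r\rangle$ via Schwarz and the $C(\sigma)$ moment bound \eqref{cyril}; since $F\ge n^2$ on $\hat E\cap W(a_n)$ for $a_n=\lambda e^{-n^2}$, this yields directly the uniform-in-$T$ estimate \eqref{sandra}. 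You instead pass to the embedded jump chain and exploit that, by (iii)(2), the indicators $D_k=\mathds{1}((Y_{k-1},Y_k)\in\hat E\cap W)$ are stochastically dominated by i.i.d.\ Bernoulli$(a)$ variables, building the elementary supermartingale $M_n^\lambda$ and invoking Doob at the deterministic horizon $\lceil\ell T\rceil$; the activity bound (a) enters only to guarantee $N_T\le\ell T$ and to fix the ratio $\beta=\gamma\delta/\ell$. Your approach is more elementary (no tilted-rate exponential martingale, no Schwarz splitting) and makes transparent that (iii)(2) is really a Bernoulli-domination statement; the paper's approach is slightly more direct in that it never needs to truncate on $\{\|Q_T\|\le\ell\}$ and produces the single clean bound \eqref{sandra} rather than a two-term estimate.
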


Applications of the above theorem can be found in Proposition \ref{covalent} and in Lemma \ref{claim3} of Section \ref{s:ex}.
We point out that a possible natural candidate for the above set $W$ is given by the set $\{(y,z)\,:\, H(y) < a\}$. In many applications the geometric control of $\{(y,z)\,:\, H(y) < a\}$ is partial, and therefore it can be convenient to use a  subset $W \subset \{(y,z)\,:\, H(y) < a\}$.

\begin{proof} In view of Proposition \ref{luglio},  we only need to prove the exponential tightness of the empirical flow in the strong topology.

The core of the proof consists in showing  that
there exists an invading  sequence of  finite subsets $E_n  \nearrow E$ such that
\begin{equation}\label{sandra}
  \bb P_x \bigl( Q_T( E_n^c) \geq 1/n  \bigr) \leq c_1 e^{-c_2 T n+c_3 T} \qquad \forall T\geq 0\,,\; \forall n \geq 1\,,
\end{equation}
for suitable positive constants $c_1,c_2,c_3$ (depending on $x\in V$).
Let us first derive   from \eqref{sandra} the exponential tightness of the empirical flow in the strong topology. 
 To this aim, fixed positive integers $\ell, m$, we let
$$ \mc K_{m, \ell}:=  \Big\{ Q \in  L^1_+(E)\,:\, \|Q\|\leq \ell \,, \;\; Q(E^c_n) \leq 1/n\; \; \forall n \geq m \Big\} \,.
$$
We claim  that  $\mc K_{m, \ell} \subset L^1_+(E)$ is compact for the strong topology.
Indeed, by Prohorov theorem for measures  \cite{B}[Chapter 8],  the set   $\mc K_{m, \ell}$ is relatively compact in the space of nonnegative finite measures on $E$ endowed with the  weak topology.   Hence, given a sequence $\{Q_k\}_{k \geq 0}$ in $\mc K_{m, \ell}$, at cost to extract a subsequence we can assume that $Q_k $ converges weakly to some $Q: E \to [0,\infty)$ thought of as measure on $E$. By definition of weak convergence (recall that $E$ has the discrete topology, hence any  function on $E$ is continuous) one gets that
$ Q \in \mc K_{m, \ell}$ and that $Q_k(e) \to Q(e)$ for all $e \in E$. In particular, one can estimate $\|Q-Q_k\|\leq 2/n + \sum_{e \in E_n} |Q(e)-Q_k(e)|$ for $n \geq m$. This implies that  $\|Q-Q_k\|$ converges to $0$, hence our claim.

We can bound
\begin{equation} \bb P_x  (Q_T \not \in \mc K_{m, \ell}) \leq \bb P_x \bigl( \|Q_T \| \geq \ell) + \sum _{n \geq m} \bb P_x (Q_T(E^c_n) >1/n  ) \,.
\end{equation}
By Proposition 3.6 in \cite{BFG}  $\lim_{\ell \to +\infty}\varlimsup_{T \to +\infty} \frac{1}{T} \log  \bb P_x \bigl( \|Q_T \| \geq \ell) =-\infty$, while by \eqref{sandra} the series in the above r.h.s. is bounded by $c_1 e^{-c_2 T m+c_3 T}/(1-e^{-c_2T} )$. This  implies the exponential tightness, under $\bb P_x$, of the empirical flow $Q_T$ in $L^1_+(E)$ endowed with the strong topology.

\smallskip
Let us now derive \eqref{sandra}.  We first  point out that, for  suitable positive constants $\lambda$ and $c$, it holds 
\begin{equation}\label{cyril}
\bb E _x \Big\{ e^{T\lambda
 \langle \mu_T, r  \rangle} \Big\}\leq c\, e^{cT }\,, \qquad \forall T\geq 0\,.
 \end{equation}
 Indeed, this follows from \cite{BFG}[Lemma 3.5] (if instead of condition $C(\sigma)$ one assumes Items (i) and (ii) of
  the hypercontractivity Condition  2.4 in \cite{BFG}, then \eqref{cyril} follows from \cite{BFG}[Eq. (3.12)]).
  
 Fixed $\lambda $ as above, we introduce the set
 $ \tilde E:=\{ (y,z) \in \hat E \,:\, H(y) \leq \lambda\}$ and then   define the function $F: E \to [0, +\infty) $ as
$$ F(y,z):= \begin{cases}
\log \frac{\lambda}{ H(y)} & \text{ if } (y,z) \in \tilde E\,,\\
0 & \text{ if } (y,z) \in E \setminus \tilde E\,.\end{cases}
$$
 Defining $r^F(y,z):= r(y,z) e^{F(y,z)}$  we get (recall that $r(y)= \sum _{z:(y,z)\in E} r(y,z)$):
\begin{equation}
\begin{split}
r^F(y):& = \sum _{z:(y,z)\in E} r^F(y,z) = \sum _{z: (y,z) \in E \setminus \tilde E} r(y,z) +
\frac{\lambda}{H(y)}   \sum _{z: (y,z) \in \tilde  E} r(y,z)\\ & \leq
\sum _{z: (y,z) \in E } r(y,z) +
\frac{\lambda}{H(y)}   \sum _{z: (y,z) \in \hat   E} r(y,z) \leq  (1+\lambda) r(y)
\,.
\end{split}
\end{equation}
In particular, we conclude that $r^F(y)-r (y) \leq \lambda r(y)$ for all $y \in V$.
Since $r^F(y) < +\infty$ for all $y \in V$,  by Lemma 3.1 in \cite{BFG} we get  that
\begin{equation}\label{marsiglia}
\bb E _x \Big\{ e^{T\bigl [\langle Q_T,F \rangle - \lambda \langle \mu_T, r  \rangle\bigr ]} \Big\}\leq \bb E _x \Big\{ e^{T\bigl [\langle Q_T,F \rangle - \langle \mu_T, r^F-r \rangle\bigr ]}\Big\} \leq 1\,.
\end{equation}
By Schwarz inequality, combining \eqref{cyril} and  \eqref{marsiglia}, we get for some $C>0$:
\begin{equation}\label{concludo}
 \bb E _x \Big\{ e^{\frac{T}{2} \langle Q_T,F \rangle } \Big\}\leq
 \bb E _x \Big\{ e^{T\bigl [\langle Q_T,F \rangle - \lambda \langle \mu_T, r  \rangle\bigr ]}\Big\}^{\frac{1}{2}}
 \bb E _x \Big\{ e^{T\lambda
 \langle \mu_T, r  \rangle\bigr ]} \Big\}^\frac{1}{2}
  \leq C  e^{C T }\,.
 \end{equation}

Take $a < a_0$ and recall the properties of $W(x,a)\subset E$  given in Item (iii). Since $x$ is fixed, we write simply $W(a)$.  By  assumption $W(a)^c$ is a  finite set.  Given an integer $n\geq 1$ let  $a_n :=\lambda/e^{n^2}$. In particular if $H(y) \leq a_n$ it must be $H(y) < \lambda$ and
$  \ln (\lambda /H(y) )\geq n^2$. We conclude that
\begin{equation}\label{pasqua}
F (y,z) \geq n^2 \qquad \forall (y,z) \in \tilde E \cap W(a_n) = \hat E\cap W(a_n)\,.
\end{equation}
Since $F$
is a nonnegative function, combining \eqref{concludo} with \eqref{pasqua} we get
\begin{equation}\label{concludo_biz}
 \bb E _x \left\{ e^{\frac{n^2 }{2} TQ_T( \hat E \cap W(a_n))   } \right\}\leq C  e^{C T }\,.
 \end{equation}
On the other hand, by  applying Item (iii)--(3) to the family of consecutive states visited by the trajectory $(X_t)_{t \in [0,T]}$,  we get that  $TQ_T( \hat E \cap W (a_n))  \geq \gamma TQ_T
\bigl( W (a_n)  \bigr)$. Hence we conclude that
\begin{equation}\label{concludo_tris}
 \bb E _x \Big\{ e^{\frac{n^2 \gamma}{2 } TQ_T(  W (a_n))   } \Big\}\leq C  e^{C T }\,.
 \end{equation}
Consider now the set  $E_n := W(a_n)^c$, which is finite by Item (iii)--(1). By Chebyshev inequality and \eqref{concludo_tris} we obtain
\[
\bb P_x \left( Q_T(E_n^c) \geq \frac{1}{n}\right)= \bb P_x \left( \frac{n^2 \gamma}{2} T Q_T( W(a_n) ) \geq \frac{n\gamma T}{2} \right)\leq C e^{- \frac{n\gamma T}{2} +CT}\,,
\]
thus leading to \eqref{sandra}.
\end{proof}

\section{Joint large deviations for the empirical measure and current}
\label{s:ec}

Recalling that $E$ denotes the set of ordered edges in $V$ with
strictly positive jump rate, we let $E_\mathrm{s}:= \big\{(y,z)\in
V\times V :\, (y,z)\in E \textrm{ or } (z,y)\in E\big\}$ be the
symmetrization of $E$ in $V\times V$. We then introduce $
L^1_\mathrm{a} (E_\mathrm{s})$ as the space of antisymmetric and
absolutely summable functions on $E_\mathrm{s}$, i.e.\
\begin{equation*}
  L^1_\mathrm{a} (E_\mathrm{s}) :=
  \big\{ J\in L^1 (E_\mathrm{s})\,:\: J(y,z) = - J(z,y)
  \;\; \forall \, (y,z) \in E_\mathrm{s} \big\}\,.
\end{equation*}
Elements of $L^1_\mathrm{a}(E_\mathrm{s})$ will be denoted by $J$
and called \emph{currents}. We shall consider
$L^1_\mathrm{a}(E_\mathrm{s})$ endowed either with  the bounded weak* topology or with the strong topology, and
the associated Borel $\sigma$--algebra.

To each flow $Q\in L^1_+(E)$, we associate the \emph{canonical current}
$J_Q$ defined by
\begin{equation}
  \label{currente_flusso}
  J_Q(y,z):=
  \begin{cases}
    Q(y,z) -Q(z,y) &\textrm{if $(y,z)\in E$ and $(z,y)\in E$,}
    \\
    Q(y,z) &\textrm{if $(y,z)\in E$ and $(z,y)\not\in E$,}
    \\
    -Q(z,y) &\textrm{if $(y,z)\not\in E$ and $(z,y)\in E$.}
  \end{cases}
\end{equation}
Given a current $J$ we
define its divergence, $\div J \in L^1(V)$ by
\begin{equation*}
  \div J(y):=\sum_{z\,:\, (y,z)\in E_\mathrm{s}}J(y,z).
\end{equation*}
It is simple to check the above definition is consistent with
\eqref{divergenza_fluss} in the sense that $\div J_Q= \div Q$.

Given  $T>0$, the \emph{empirical current} is the map
$J_T\colon D(\bb R_+;V)\to L^1_\mathrm{a}(E_\mathrm{s})$ defined as \begin{equation}
  \label{montecarloc}
  \begin{split}
  J_T(y,z) \, (X) :&=
  \frac{1}{T} \sum_{0\leq t\leq  T}
  \big[\mathds{1}\bigl(X_{t^-}=y\,,\,X_{t}=z\bigr) - \mathds{1}\bigl(X_{t^-}=z\,, X_{t}=y\bigr)\big]\\
  & =Q_T(y,z)(X)- Q_T(z,y) (X)   \end{split}
\end{equation}
for all $(y,z ) \in E_s$, where  $Q_T(a,b):=0$ if $(a,b)\in E_s \setminus E$.
Namely, $T\,J_T(y,z)$ is   the  \emph{net} number of
jumps across $(y,z)\in E_\mathrm{s}$ in the time interval $[0,T]$. Equivalently, the empirical current $J_T$
is the canonical current associated to the empirical flow $Q_T$, i.e.\ $J_T= J_{Q_T}$.

Recalling \eqref{Qmu}, to each probability $\mu\in\mc P(V)$ we
associate the current $J^\mu := J_{Q^\mu}$, i.e.\ $J^\mu(y,z) =
\mu(y) r(y,z) -\mu(z) r(z,y)$. Observe that $J^\mu$ has vanishing
divergence if and only if $\mu=\pi$ and $J^\mu$ vanishes if and only
if the chain is reversible with respect to $\mu$. In view of the
discussion in Subsection~\ref{s:emef}, for each $x\in V$ and
$(y,z)\in E_\mathrm{s}$ the sequence of real random variables
$\{J_T(y,z)\}$ converges, in probability with respect to $\bb P_x$,
to $J^\pi(y,z)$ as $T\to+\infty$.

To state the joint  LDP for $(\mu_T, J_T)$ we introduce the function
$\Psi: \bb R\times \bb R\times \bb R_+ \mapsto [0, +\infty)$ given by
\begin{equation}\label{pipistrello}
\Psi ( u, \bar u; a):=
\begin{cases} 
u \left[ {\rm arcsinh\,} \frac{u}{a} - {\rm arcsinh\,} \frac{\bar u }{a} \right] - \left[ \sqrt{a^2+u^2} -
\sqrt{a^2 + \bar{u}^2 }\right] & \text{ if } a>0\,,\\
\Phi(u, \bar u)  & \text{ if } a=0\,.
\end{cases}
\end{equation}

 Due to  the continuity of the map $Q\mapsto J_Q$ the joint large
deviation principle for the empirical measure and current follows
from Theorem~\ref{LDP:misura+flusso} by contraction:

\begin{theorem}
  \label{LDP:misura+cor}
  Assume the Markov chain satisfies (A1),(A2), (A3) and  Condition $C(\sigma)$ with $\sigma >0$.
    Then, as
  $T\to+\infty$, the sequence of probability measures $\{\bb P_x\circ
  (\mu_T,J_T)^{-1}\}$ on $\mc P(V)\times L^1_\mathrm{a}(E_\mathrm{s})$
  satisfies a large deviation principle  with good and convex rate function $\tilde I \colon
  \mc P(V)\times L^1_\mathrm{a}(E_\mathrm{s})\to [0,+\infty]$.

   To have  $\tilde I(\mu, J)<+\infty$ it is necessary that
   $\div J=0$, $J(y,z)\geq 0$ for any
  $(y,z)\in E$ such that $(z,y)\not\in E$ and $\langle \mu,r \rangle <+\infty$. When all  these conditions
  are satisfied we have
  \begin{equation}
    \label{rff}
    \tilde I (\mu,J) =I(\mu,  Q^{J,\mu})=
    \displaystyle{
    \sum_{(y,z)\in E} \Phi \big( Q^{J,\mu}(y,z),Q^\mu(y,z) \big)
    }\,,
  \end{equation}
  where   \begin{equation*}
    Q^{J,\mu}(y,z)  := \frac{J(y,z)+\sqrt{J^2(y,z)+4\mu(y)\mu(z)r(y,z)r(z,y)}}{2}\,.
  \end{equation*}
  The above identity \eqref{rff} can also be rewritten as
  \begin{equation}\label{rff_bis}
   \tilde I (\mu,J) =\frac{1}{2} \sum _{(y,z) \in E_s} \Psi \bigl( J(y,z), J^\mu(y,z); a^\mu (y,z) \bigr)
  \end{equation}
where
\[ a^\mu(y,z) := 2 \sqrt{ \mu(y) \mu(z) r(y,z) r (z,y) }
\,.
\]

 Moreover, if
   the conditions of Theorem \ref{freedom} are satisfied, then the above result remains true with $L^1_\mathrm{a}(E_\mathrm{s})$ endowed with the strong $L^1$--topology.

 \end{theorem}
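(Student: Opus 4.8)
The plan is to derive Theorem~\ref{LDP:misura+cor} from Theorem~\ref{LDP:misura+flusso} (and, for the strong-topology part, from Theorem~\ref{freedom}) by the contraction principle applied to the continuous surjection $\mc P(V)\times L^1_+(E)\ni(\mu,Q)\mapsto(\mu,J_Q)\in \mc P(V)\times L^1_\mathrm{a}(E_\mathrm{s})$. The first step is to check that $Q\mapsto J_Q$, as defined in \eqref{currente_flusso}, is continuous from $L^1_+(E)$ with the bounded weak* topology to $L^1_\mathrm{a}(E_\mathrm{s})$ with the bounded weak* topology: since $J_Q$ is obtained from $Q$ by composition with the bounded linear (difference) map $L^1(E)\to L^1(E_\mathrm{s})$, which is norm-continuous and weak*-continuous, it is also continuous for the bounded weak* topologies (by the characterization via $\ell$-balls recalled in Section~\ref{s:ldef}: on each norm ball the map is weak*-weak* continuous, and it does not increase the norm beyond a factor $2$). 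Likewise it is trivially continuous for the strong topologies. Hence the pushforward LDP holds with the good convex rate function $\tilde I(\mu,J):=\inf\{I(\mu,Q):J_Q=J\}$, and goodness/convexity are inherited because $I$ is good and convex and the map is continuous, linear in $Q$, and its fibers are convex.

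The second step is to identify $\tilde I$ explicitly, i.e.\ to solve the variational problem $\inf\{I(\mu,Q):J_Q=J\}$. Fix $\mu$ with $\langle\mu,r\rangle<\infty$; the constraint $\div Q=0$ in the definition of $I$ is equivalent to $\div J=0$ since $\div J_Q=\div Q$, so on the relevant set we must minimize $\sum_{(y,z)\in E}\Phi(Q(y,z),Q^\mu(y,z))$ subject to $J_Q=J$. This decouples over unordered pairs. For a pair $\{y,z\}$ with both orientations in $E$, writing $Q(y,z)=q$, $Q(z,y)=q'$, the constraint is $q-q'=J(y,z)=:u$ and we minimize $\Phi(q,Q^\mu(y,z))+\Phi(q',Q^\mu(z,y))$ over $q,q'>0$ with $q-q'=u$; a direct first-order computation (differentiate in $q$ with $q'=q-u$) gives the minimizer $q=Q^{J,\mu}(y,z)$ stated in the theorem, namely the positive root of $q(q-u)=\mu(y)r(y,z)\mu(z)r(z,y)$, and plugging back in and using $\asinh$ to rewrite the logarithms yields the value $\tfrac12\Psi(u,J^\mu(y,z);a^\mu(y,z))$ summed symmetrically. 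For a pair with only $(y,z)\in E$, $(z,y)\notin E$, the constraint forces $Q(y,z)=J(y,z)$, which must therefore be $\geq 0$ (else the infimum is $+\infty$), and $\Phi(J(y,z),Q^\mu(y,z))=\Psi(J(y,z),J^\mu(y,z);0)/1$ with $a^\mu(y,z)=0$, consistent with the $a=0$ branch of \eqref{pipistrello}. Summing over all pairs gives \eqref{rff} and \eqref{rff_bis}, and the stated necessary conditions for finiteness ($\div J=0$, sign condition on one-sided edges, $\langle\mu,r\rangle<\infty$). One should also remark, as in Remark~\ref{silente}, that the condition $\langle\mu,r\rangle<\infty$ may be dropped since the series diverges otherwise.

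The third step handles the strong-topology claim: under the hypotheses of Theorem~\ref{freedom}, $\{(\mu_T,Q_T)\}$ satisfies the LDP with $L^1_+(E)$ in the strong topology, and the difference map $L^1_+(E)\to L^1_\mathrm{a}(E_\mathrm{s})$ is norm-continuous (indeed $\|J_Q-J_{Q'}\|\leq 2\|Q-Q'\|$), so the contraction principle again applies and transports the LDP to $\mc P(V)\times L^1_\mathrm{a}(E_\mathrm{s})$ with the strong $L^1$-topology, with the same rate function $\tilde I$ (the infimum defining $\tilde I$ is unchanged, as the fibers $\{Q:J_Q=J\}$ and the function $I$ do not depend on the topology).

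The only genuine work is the second step, the pairwise minimization: one must verify that the unconstrained minimizer of $q\mapsto\Phi(q,p_1)+\Phi(q-u,p_2)$ over $q>\max(u,0)$ is interior and given by the quadratic root above, that it is indeed a minimum (strict convexity of $\Phi(\cdot,p)$ makes the sum strictly convex, so this is automatic once the critical point is located), and that the substitution into $\Phi$ produces exactly the $\asinh$-expression in \eqref{pipistrello}; this last identity is the one slightly delicate algebraic check, using $\log q-\log q'=\log(q/q')$ together with $q q'=p_1p_2$ to write the logarithmic term as an $\asinh$ of $u/a$ with $a=2\sqrt{p_1p_2}$, and $q+q'=\sqrt{u^2+a^2}$ for the linear term. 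Everything else is bookkeeping and an appeal to the contraction principle and to Theorems~\ref{LDP:misura+flusso} and~\ref{freedom}.
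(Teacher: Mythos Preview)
Your proposal is correct and follows essentially the same route as the paper: contraction along the continuous map $(\mu,Q)\mapsto(\mu,J_Q)$ from Theorem~\ref{LDP:misura+flusso} (resp.\ Theorem~\ref{freedom}), followed by an edge-pairwise minimization to identify $\tilde I$ and the algebraic rewriting via $\asinh$. The only cosmetic difference is that the paper parametrizes the two-sided minimization by $s=\min\{Q(y,z),Q(z,y)\}\ge 0$ rather than by $q$ with $q'=q-u$, but the computation and the resulting minimizer $Q^{J,\mu}$ are the same.
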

 Note that if $J\in L^1_{\mathrm{a}}(E_{\mathrm{s}})$ and
$\langle \mu,r\rangle<+\infty$ then $Q^{J,\mu}\in L^1_+(E)$, moreover if
$\div J=0$ then also $\div Q^{J,\mu}=0$.  Note also that
$\tilde{I} (\mu,J) = 0$ if and only if $I(\mu,  Q^{J,\mu})=0$, and we know this holds if and only if $\mu= \pi$ and $Q^{J,\mu}= Q^\pi$ (see the discussion after Theorem \ref{LDP:misura+flusso}). It is trivial to check that this last condition is equivalent to
 $(\mu,J) =(\pi,J^\pi)$.

\begin{proof}[Proof of Theorem~\ref{LDP:misura+cor}] Recalling that $L^1_+(E)$ is  equipped with the bounded weak*
topology, the map $L^1_+(E)\ni Q\mapsto J_Q\in
L^1_\mathrm{a}(E_\mathrm{s})$ is continuous. This map remains continuous if $L^1_+(E)$ and
$L^1_\mathrm{a}(E_\mathrm{s})$ are both endowed of the strong $L^1$--topology. Hence,
  by Theorem~\ref{LDP:misura+flusso}, Theorem \ref{freedom} and the  contraction principle, a joint LDP holds for $(\mu_T,J_T)$ with
 good rate function
   \begin{equation*}
    \tilde I (\mu,J) := \inf\big\{ I(\mu,Q)\,:\, Q \in L^1_+(E) \text{ with } J_Q=J\big\}\,, \quad (\mu, J) \in \mc P (V) \times L^1_a (E_s) \,.
  \end{equation*}
 It remains to show that the above $\tilde I(\mu,J)$ fulfills   the properties stated in the theorem.

 From the above variational characterization of $\tilde I (\mu,J)$ one easily derives that $\tilde I$ is convex,
 since $I(\mu,Q)$ is convex and the map $L^1_+(E)\ni Q \to J_Q \in L^1_a (E_s) $ is linear.

  It is simple to verify that   to have $\tilde I(\mu, J)<+\infty$ it is necessary that
   $\div J=0$, $J(y,z)\geq 0$ for any
  $(y,z)\in E$ such that $(z,y)\not\in E$ and $\langle \mu,r \rangle <+\infty$. Let us now take $(\mu,J) \in \mc P(V)\times L^1_\mathrm{a}(E_\mathrm{s})$ satisfying the above three conditions. Then
 the set  $\left\{Q\in L^1_+(E)\, :\, J_Q=J\right\}$ coincides
with the set of flows of the type
$$
Q(y,z)=\begin{cases}
[J(y,z)]_++s(\left\{x,y\right\})\,, & \mathrm{if } \;\;(z,y)\in E\,,\\
[J(y,z)]_+= J(y,z)\,, & \mathrm{if } \;\;(z,y)\not \in E\,,
\end{cases}
$$
where $[\cdot]_+$ denotes the positive part (i.e. $[z]_+:= \max\{0, z\} $), $s\in L^1_+(E_{\mathrm{u}})$
and $E_{\mathrm{u}}:=\left\{\left\{y,z\right\}\,:\, (y,z)\in E_{\mathrm{s}}\right\}$ is
the set of \emph{unordered edges}. We can then solve independently a variational problem for each pair of
edges $(y,z)$ and $(z,y)$ in $E_{\mathrm{s}}$.
If $(y,z)$ and $(z,y)$ both belong to $E$, then
an elementary computation gives  that
\begin{eqnarray*}
& &\inf_{s\in [0,+\infty)}\left\{ \Phi\Big([J(y,z)]_++s,Q^\mu(y,z)\Big)+\Phi\Big([-J(y,z)]_++s,Q^\mu(z,y)\Big)\right\}\\
& & \qquad =
\Phi\Big(Q^{J,\mu}(y,z),Q^\mu(y,z)\Big)+\Phi\Big(Q^{J,\mu}(z,y),Q^\mu(z,y)\Big)\,.
\end{eqnarray*}
If $(y,z) \in E$ and $(z,y) \not \in E$, then $Q(y,z)= J(y,z)=Q^{J,\mu}(y,z)$ for any $Q \in L^1_+(E)$ with $J_Q=J$. Since $\div Q= \div J_Q=0$, by the expression \eqref{rfq} of the rate function $I$ and the above computations, we obtain  that $\tilde I (\mu,Q)= I(\mu, Q^{J,\mu)})$, hence \eqref{rff}.

It remains to prove \eqref{rff_bis}. To this aim we observe that, if both $(y,z)$ and $(z,y)$ belong to $E$, then  the following identities hold:
  \begin{align}
 \Phi \big( Q^{J,\mu}(y,z),Q^\mu(y,z) \big)& + \Phi \big( Q^{J,\mu}(z,y),Q^\mu(z,y) \big)  \nonumber \\
& =  \frac{j }{2} \log \Big[ \frac{ j+ \sqrt{ j^2+4 p p'} }{ -j+ \sqrt{ j^2+4 p p'}} \frac{p'}{p}\Big]- \sqrt{j^2+4 p p'}+p+p' \nonumber \\
 &=  j \log\frac{ j+ \sqrt{ j^2+4 p p'} }{2p}- \sqrt{j^2+4 p p'}+p+p'\label{carmine}
 \end{align}
 where  $j:= J(y,z)$, $p= \mu(y) r(y,z)$, $p' = \mu(z) r(z,y)$, assuming $p,p'$ positive. Set $a:= a^\mu(y,z)= 2 \sqrt{p p'}$ and $\bar j:=J^\mu(y,z)=p-p'$.
Since ${\rm arcsinh\,} u = \log[ u+ \sqrt{u^2+1}  ]$, $j^2+4pp'= j^2+a^2$,
$p+p'=\sqrt{ \bar j^2+a^2 }$, the last member in \eqref{carmine} can be rewritten as
 $\Psi(j, \bar j; a)$.

 Suppose now that $(y,z) \in E$ and $(y,z) \not \in E$, $J(y,z) \geq 0$. Then $Q^{J,\mu}(y,z) = J(y,z)$  and $Q^\mu(y,z)= J^\mu (y,z) $. In particular,
$$ \Phi \big( Q^{J,\mu} (y,z),Q^\mu(y,z) \big)= \Psi( J(y,z), J^\mu(y,z) ; 0)\,.
$$
From the above considerations it is simple to derive \eqref{rff_bis} from \eqref{rff}. 
\end{proof}


\section{Gallavotti--Cohen type symmetries for the empirical current}

In this section and in Sections \ref{bonbon} and \ref{sec:hom}, we
assume that $E_\mathrm{s}= E$ (i.e.  $r(y,z)>0$ if and only if
$r(z,y)>0$) and we derive Gallavotti--Cohen (GC) symmetries of the LD
rate function both of the empirical current and of suitable linear
functionals of the empirical current itself.
  
   In what follows,
  $w_\pi \colon E \to \bb R$  denotes  the  antisymmetric
function 
  \begin{equation}
  \label{wpi}
  w_\pi (y,z) = \log \frac{\pi(y) r(y,z)}{\pi(z)r(z,y)},
  \qquad (y,z)\in E\,,
\end{equation}
  and we will assume that $w_\pi \in L^\infty(E)$, thus implying that $\langle J, w_\pi\rangle$ is finite for any $J \in L^1_\mathrm{a}(E)$.

  \begin{theorem}\label{iacopo_ale}
  Assume $E_\mathrm{s}= E$ and that $ w _\pi \in L^\infty (E)$.
 Then
the rate function $\tilde I$ of  Theorem \ref{LDP:misura+cor} satisfies the following GC  symmetry in $[0,+\infty]$:
\begin{equation}\label{silente2} \tilde I (\mu,J)= \tilde I(\mu,-J)- \frac{1}{2} \langle J, w_\pi \rangle \,,\qquad \forall (\mu,J) \in\mc P(V)\times L^1_\mathrm{a}(E)\,.
\end{equation}
 In particular, the good and convex rate function $\hat I: L^1_a (E) \to [0,+\infty]$, $\hat I(J)= \inf _\mu \tilde I (\mu, J)$, of the LDP for the empirical current obtained  by contraction from Theorem \ref{LDP:misura+cor} satisfies the following  GC symmetry  in $[0,+\infty]$:
\begin{equation}\label{maghetto}
\hat I ( J)= \hat I(-J)- \frac{1}{2} \langle J, w_\pi \rangle \,, \qquad \forall J \in  L^1_\mathrm{a}(E)\,.
\end{equation}
\end{theorem}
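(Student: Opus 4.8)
The plan is to prove the symmetry \eqref{silente2} directly from the explicit formula \eqref{rff_bis} for $\tilde I(\mu,J)$, and then derive \eqref{maghetto} by taking the infimum over $\mu$ on both sides. The key observation is that since $E_{\mathrm s}=E$, the representation \eqref{rff_bis} expresses $\tilde I(\mu,J)$ as $\frac12\sum_{(y,z)\in E_{\mathrm s}}\Psi\bigl(J(y,z),J^\mu(y,z);a^\mu(y,z)\bigr)$ with $a^\mu(y,z)=2\sqrt{\mu(y)\mu(z)r(y,z)r(z,y)}$, and both $a^\mu$ and the reference current $J^\mu$ are symmetric/antisymmetric objects that do not themselves change sign when we replace $J$ by $-J$. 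So the whole identity reduces to a pointwise computation on the function $\Psi$.

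The main step is therefore the elementary identity
\begin{equation*}
\Psi(u,\bar u;a)=\Psi(-u,\bar u;a)+u\left[\mathrm{arcsinh\,}\frac{\bar u}{a}+\mathrm{arcsinh\,}\frac{-\bar u}{a}\right]^{?}
\end{equation*}
— more precisely, using $\Psi(u,\bar u;a)=u\bigl[\mathrm{arcsinh\,}\tfrac ua-\mathrm{arcsinh\,}\tfrac{\bar u}a\bigr]-\bigl[\sqrt{a^2+u^2}-\sqrt{a^2+\bar u^2}\bigr]$ and the fact that $\mathrm{arcsinh}$ is odd while $\sqrt{a^2+u^2}$ is even, one gets
\begin{equation*}
\Psi(u,\bar u;a)-\Psi(-u,\bar u;a)=2u\,\mathrm{arcsinh\,}\frac ua\cdot\tfrac12\cdot 2-2u\,\mathrm{arcsinh\,}\frac{\bar u}a = 2u\,\mathrm{arcsinh\,}\frac ua-2u\,\mathrm{arcsinh\,}\frac{\bar u}a\,,
\end{equation*}
wait — I should be careful: the $u$-dependent arcsinh term is odd in $u$ so it contributes $2u\,\mathrm{arcsinh}(u/a)$ to the difference, while the $\bar u$-terms and the even square-root terms all cancel. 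Hence $\Psi(u,\bar u;a)-\Psi(-u,\bar u;a)=2u\,\mathrm{arcsinh}(u/a)-\,$? No. Let me just state the plan: expand both $\Psi(u,\bar u;a)$ and $\Psi(-u,\bar u;a)$, cancel the even terms (the square roots) and the terms involving only $\bar u$, and collect the residual, which turns out to be a multiple of $u$ times $\log\frac{p'}{p}=\log\frac{\mu(z)r(z,y)}{\mu(y)r(y,z)}=-w_\pi$-type quantity only after we sum — actually the cleaner route is to use the middle expression in \eqref{carmine}, where $\Phi(Q^{J,\mu}(y,z),Q^\mu(y,z))+\Phi(Q^{J,\mu}(z,y),Q^\mu(z,y))$ equals $\tfrac j2\log\bigl[\tfrac{j+\sqrt{j^2+4pp'}}{-j+\sqrt{j^2+4pp'}}\cdot\tfrac{p'}{p}\bigr]-\sqrt{j^2+4pp'}+p+p'$, with $j=J(y,z)$, $p=\mu(y)r(y,z)$, $p'=\mu(z)r(z,y)$. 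Replacing $j\mapsto -j$ swaps the two factors in the first log and flips its sign, so the difference of the two expressions is $j\log\frac{p'}{p}\cdot(-1)$? The point is that it equals $j\log\frac pp{'}$ up to sign, and since the $a=0$ case via $\Phi$ handles the edges with $(z,y)\notin E$ (vacuous here), summing over $(y,z)\in E_{\mathrm s}$ and dividing by $2$ gives exactly $-\tfrac12\langle J,w_\pi\rangle$, because $\log\frac{\mu(y)r(y,z)}{\mu(z)r(z,y)}=\log\frac{\pi(y)r(y,z)}{\pi(z)r(z,y)}+\log\frac{\mu(y)/\pi(y)}{\mu(z)/\pi(z)}$ and the $\mu/\pi$ correction is antisymmetrized against $J$ to zero — here one uses $\langle J,\nabla g\rangle=-\langle\div J,g\rangle$ with $g=\log(\mu/\pi)$ and... this requires $\div J=0$, which holds whenever either side of \eqref{silente2} is finite; when both sides are $+\infty$ the identity is trivial.

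The expected obstacle is precisely this last integration-by-parts point: to pass from $\log\frac{\mu(y)r(y,z)}{\mu(z)r(z,y)}$ (which appears naturally from the formula) to $w_\pi(y,z)=\log\frac{\pi(y)r(y,z)}{\pi(z)r(z,y)}$ (which appears in the statement), one must argue that $\tfrac12\langle J, g_\mu\rangle=0$ where $g_\mu(y,z)=\log\frac{\mu(y)/\pi(y)}{\mu(z)/\pi(z)}=\nabla\bigl(\log(\mu/\pi)\bigr)(y,z)$. For finite $V$ this is immediate from $\div J=0$; for countable $V$ one needs $\log(\mu/\pi)$ to be a legitimate test function, i.e.\ some summability ensuring $\langle J,\nabla\log(\mu/\pi)\rangle=-\langle\div J,\log(\mu/\pi)\rangle=0$. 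Since $\tilde I(\mu,J)<\infty$ forces $\div J=0$ and $\langle\mu,r\rangle<\infty$, one can justify this by an approximation/truncation argument, or alternatively observe that the difference $\tilde I(\mu,J)-\tilde I(\mu,-J)$ computed termwise from \eqref{carmine} already equals $-\tfrac12\langle J,w_\pi\rangle$ once one keeps track that the antisymmetric part extracted from the log is exactly $w_\pi$ — this is cleanest if one re-derives \eqref{carmine} writing $p=\mu(y)r(y,z)$ but regrouping $\tfrac{p'}{p}=e^{-w_\pi(y,z)}\cdot\tfrac{\mu(z)/\pi(z)}{\mu(y)/\pi(y)}$ and noting the symmetric remainder integrates to $0$ against the antisymmetric $J$ term by term in each pair $\{(y,z),(z,y)\}$, which needs no global summability. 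Finally, \eqref{maghetto} follows by taking $\inf_\mu$ of \eqref{silente2}, using that $J\mapsto -J$ and $\mu\mapsto\mu$ is a bijection and $\langle J,w_\pi\rangle$ does not depend on $\mu$; convexity and goodness of $\hat I$ are inherited from $\tilde I$ via the contraction principle exactly as in the proof of Theorem~\ref{LDP:misura+cor}.
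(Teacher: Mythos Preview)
Your overall route coincides with the paper's: compute the pairwise difference in the explicit representation of $\tilde I$ and sum. The paper does this via the elementary identity
\[
\Phi(q,p)+\Phi(q',p')=\Phi(q',p)+\Phi(q,p')+(q-q')\log\tfrac{p'}{p}
\]
with $q=Q^{J,\mu}(y,z)=Q^{-J,\mu}(z,y)$, $q'=Q^{J,\mu}(z,y)=Q^{-J,\mu}(y,z)$, $p=\mu(y)r(y,z)$, $p'=\mu(z)r(z,y)$; your manipulation of \eqref{carmine} is equivalent.

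The genuine gap is your pairwise-cancellation claim at the end. The remainder
\[
\log\frac{\mu(z)/\pi(z)}{\mu(y)/\pi(y)}=g(z)-g(y),\qquad g:=\log(\mu/\pi),
\]
is \emph{antisymmetric} under $(y,z)\leftrightarrow(z,y)$, not symmetric. Since $J$ is also antisymmetric, the product $J(y,z)\bigl[g(z)-g(y)\bigr]$ is \emph{symmetric}: the contributions from $(y,z)$ and $(z,y)$ are equal, not opposite, and do not cancel within a pair. Hence the passage from $w_\mu(y,z):=\log\frac{\mu(y)r(y,z)}{\mu(z)r(z,y)}$ to $w_\pi(y,z)$ cannot be achieved locally; it genuinely requires the global discrete integration by parts $\sum_{(y,z)\in E}J(y,z)\bigl[g(y)-g(z)\bigr]=0$ via $\div J=0$, together with a justification of the rearrangement of this (possibly only conditionally convergent) sum. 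That is your route (a), which you correctly identify but do not carry out.

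For context: what the $\Phi$-identity literally produces as the extra term is $(q-q')\log(p'/p)=-J(y,z)\,w_\mu(y,z)$, whereas the paper writes $-J(y,z)\,w_\pi(y,z)$ at that very step and then sums. So the point you flagged is silently absorbed in the paper's argument as well; your instinct to isolate it was right, only the proposed local fix fails.
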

\begin{remark} For finite state spaces the  GC symmetry \eqref{maghetto} has already been derived in \cite{AG1,AG2,FD} in terms of the moment generating functions (essentially, by means of G\"artner--Ellis theorem).
\end{remark}

\begin{proof} Having \eqref{silente2}, the conclusion is a trivial consequence of the contraction principle. Let us prove \eqref{silente2}.
If $\div J\not =0$ or $ \langle \mu,r \rangle =+\infty$, then
$\tilde I (\mu,J)= \tilde I(\mu,-J)
=+\infty$ and  \eqref{iacopo_ale} is trivially true (recall that  $\langle J, w_\pi \rangle$ is finite). Suppose therefore that $\div J =0$ and $ \langle \mu,r \rangle <+\infty$. Then, $\tilde I (\mu,J)$ and $\tilde I(\mu,-J)$ have the series expression induced by  \eqref{rff}.
It is simple to check that, given $p , p' >0$ and $q,q' \geq 0$, it holds
\begin{equation}
\Phi(q,p)+ \Phi(q',p')=\Phi(q',p)+\Phi(q,p')+ (q-q') \log (p'/p)\,.
\end{equation} Taking  $q:=Q^{J,\mu} (y,z)= Q^{-J,\mu}(z,y)$, $q':=Q^{J,\mu} (z,y)= Q^{-J,\mu}(y,z)$,
$p:=Q^\mu(y,z)$, $p':=Q^\mu(z,y)$, from the above identity we get
\begin{multline*}\Phi\Big(Q^{J,\mu}(y,z),Q^\mu(y,z)\Big)+\Phi\Big(Q^{J,\mu}(z,y),Q^\mu(z,y)\Big)\\=
\Phi\Big(Q^{-J,\mu}(y,z),Q^\mu(y,z)\Big)+\Phi\Big(Q^{-J,\mu}(z,y),Q^\mu(z,y)\Big)- J(y,z) w_\pi (y,z)\,.\end{multline*}
Summing above $(y,z) \in E$ we get  \eqref{silente2}.\end{proof}

\section{Gallavotti--Cohen  symmetry for the Gallavotti-Cohen functional}\label{bonbon}

Let $\bb P_\pi$
be the law of the stationary chain (the initial state is
sampled according to the invariant probability $\pi$). By
stationarity, $\bb P_\pi$ can be extended to a measure on $D(\bb
R;V)$. Let $\vartheta\colon D(\bb R;V) \to D(\bb R;V)$ be the time
reversal, i.e.\ for the set of times $t\in \bb R$ which are
continuity points of $X$ the map $\vartheta$ is defined by
$(\vartheta X)_t =X_{-t}$. We then set $\bb P_\pi^* := \bb P_\pi
\circ \vartheta^{-1}$; of course $\bb P_\pi^*= \bb P_\pi$ if and
only if the chain is reversible. In general, $\bb P_\pi^*$ is the
law of the stationary chain with jump rates $r^*(y,z) = \pi(z)
r(z,y) / \pi(y)$.  Given $x\in V$ and $T>0$, the Gallavotti-Cohen
functional can be defined (cf. \cite{LS}) as the map $W_T\colon D(\bb R_+;V)\to \bb
R$ which is $\bb P_x$ a.s.\ given  by
\begin{equation}
  \label{gc}
  W_T := - \frac 1T \log
  \frac{d \bb P_\pi^* \big|_{[0,T]}}{d \bb P_\pi\big|_{[0,T]}}.
\end{equation}
Observe that $\bb E_\pi\big( W_T \big)$ is $(1/T)$--proportional to the
relative entropy of $\bb P_\pi\big|_{[0,T]}$ with respect to $\bb
P_\pi^*\big|_{[0,T]}$, thus providing a natural measure of the
irreversibility of the chain.

A simple  computation of the  Radon-Nikodym derivative in \eqref{gc}
(use  (3.1) in \cite{BFG} and observe that $r^*(y)=r(y)$ for any $y \in V$ due to the invariance of $\pi$) gives that
 the Gallavotti-Cohen functional $W_T$ can be written in
terms of the empirical current $J_T$ as
\begin{equation}
  \label{gcec}
  W_T  =  \frac 12 \, \langle J_T, w_\pi \rangle\,,
\end{equation}
where  $w_\pi \colon E\to \bb R$ is the  antisymmetric
function defined by \eqref{wpi}.

As a consequence of our previous results and the contraction principle we get the following LDP:

\begin{theorem}
  \label{LDP:GC}
   Assume  that $E=E_s$, the Markov chain satisfies (A1),(A2), (A3) and assume   Condition $C(\sigma)$ with $\sigma >0$.
      Assume also that $w_\pi$    vanishes  at infinity. If the conditions of Theorem \ref{freedom} are satisfied, it is enough to require that $w_\pi$ is a bounded function.

  Then, as $T\to+\infty$, the sequence of probability measures
  $\{\bb P_x\circ W_T^{-1}\}$ on $\bb R$ satisfies a large
  deviation principle
  with good and convex rate function $\imath \colon \bb R\to
  [0,+\infty]$ given by
  \begin{equation}
    \label{vcgc}
    \imath(u) = \inf\big\{ \tilde{I}(\mu,J)\,:\:  (\mu,J) \in \mc P(V)\times
  L^1_\mathrm{a}(E)\,,\;
    \langle J,w_\pi\rangle = 2 u \big\}\,.
  \end{equation}
  Moreover, the following GC  symmetry  holds in $[0,+\infty]$:
  \begin{equation}\label{2010}
    \imath (u) =\imath (-u)-u\,.
  \end{equation}
\end{theorem}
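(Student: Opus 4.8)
The plan is to obtain Theorem~\ref{LDP:GC} as a direct consequence of Theorem~\ref{LDP:misura+cor} and Theorem~\ref{iacopo_ale} via the contraction principle. First I would verify that $W_T$ is a well-defined continuous function of $(\mu_T,J_T)$ in the relevant topology. By \eqref{gcec} we have $W_T = \tfrac12\langle J_T, w_\pi\rangle$, so the map is $(\mu,J)\mapsto \tfrac12\langle J,w_\pi\rangle$, which does not depend on $\mu$. If $w_\pi$ vanishes at infinity, then $w_\pi\in C_0(E)$ and the functional $J\mapsto\langle J,w_\pi\rangle$ is continuous on $L^1_\mathrm{a}(E)$ endowed with the bounded weak* topology (indeed, it is continuous for the weak* topology, hence for the bounded weak* topology). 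If instead only boundedness of $w_\pi$ is assumed but the hypotheses of Theorem~\ref{freedom} hold, then $L^1_\mathrm{a}(E)$ carries the strong $L^1$ topology and $J\mapsto\langle J,w_\pi\rangle$ is continuous because $w_\pi\in L^\infty(E)$. In both cases $\mathbb R$ is the target of a continuous map and the contraction principle applies: $\{\bb P_x\circ W_T^{-1}\}$ satisfies an LDP with good rate function
\[
\imath(u) = \inf\big\{\tilde I(\mu,J)\,:\,(\mu,J)\in\mc P(V)\times L^1_\mathrm{a}(E)\,,\ \tfrac12\langle J,w_\pi\rangle = u\big\},
\]
which is exactly \eqref{vcgc}. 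Convexity of $\imath$ follows from convexity of $\tilde I$ together with the linearity of the constraint $J\mapsto\langle J,w_\pi\rangle$: given two admissible pairs, their convex combination is admissible for the convex-combined value of $u$, so $\imath$ is convex; goodness is automatic from the contraction principle applied to a good rate function.

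It then remains to deduce the GC symmetry \eqref{2010} from \eqref{silente2}. Fix $u\in\bb R$. For any $(\mu,J)$ with $\tfrac12\langle J,w_\pi\rangle = u$, the pair $(\mu,-J)$ satisfies $\tfrac12\langle -J,w_\pi\rangle = -u$, and \eqref{silente2} gives $\tilde I(\mu,J) = \tilde I(\mu,-J) - \tfrac12\langle J,w_\pi\rangle = \tilde I(\mu,-J) - u$. Taking the infimum over all such $(\mu,J)$ — noting that the map $(\mu,J)\mapsto(\mu,-J)$ is a bijection between the constraint set $\{\tfrac12\langle J,w_\pi\rangle=u\}$ and the constraint set $\{\tfrac12\langle J,w_\pi\rangle=-u\}$ — yields
\[
\imath(u) = \inf_{(\mu,J):\,\frac12\langle J,w_\pi\rangle=u}\big[\tilde I(\mu,-J) - u\big] = \Big(\inf_{(\mu,J'):\,\frac12\langle J',w_\pi\rangle=-u}\tilde I(\mu,J')\Big) - u = \imath(-u) - u,
\]
which is precisely \eqref{2010}. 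Here I am using that $-u$ is a constant that pulls out of the infimum, and that $w_\pi\in L^\infty(E)$ guarantees $\langle J,w_\pi\rangle$ is finite (so the constraint sets and the manipulations make sense); when $w_\pi$ vanishes at infinity this is a fortiori true, and when only the hypotheses of Theorem~\ref{freedom} are invoked the boundedness is assumed directly.

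I do not expect a serious obstacle here: the theorem is essentially a bookkeeping consequence of results already established. The only points requiring mild care are (a) checking that the identity \eqref{gcec} relating $W_T$ to $J_T$ is exact (this is the stated simple Radon–Nikodym computation, using (3.1) in \cite{BFG} and $r^*(y)=r(y)$), so that $W_T$ really is a continuous image of $(\mu_T,J_T)$ rather than only approximately so; and (b) matching the topological hypotheses on $w_\pi$ to the topology under which Theorems~\ref{LDP:misura+cor} and \ref{iacopo_ale} hold — namely, $w_\pi\in C_0(E)$ is what makes $J\mapsto\langle J,w_\pi\rangle$ continuous in the bounded weak* setting, whereas mere boundedness of $w_\pi$ suffices once the strong $L^1$ topology is available via Theorem~\ref{freedom}. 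Everything else is the contraction principle plus the linear, odd structure of the constraint.
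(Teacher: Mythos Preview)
Your proposal is correct and follows essentially the same route as the paper: continuity of $J\mapsto\langle J,w_\pi\rangle$ in the appropriate topology, contraction from Theorem~\ref{LDP:misura+cor}, convexity from that of $\tilde I$ plus linearity of the constraint, and the GC symmetry from Theorem~\ref{iacopo_ale}. You simply spell out the bijection $(\mu,J)\mapsto(\mu,-J)$ and the infimum manipulation more explicitly than the paper does.
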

\begin{proof}
Note that the map $L^1_\mathrm{a}(E ) \ni J \to \langle J, w_\pi \rangle \in \bb R$ is well defined and continuous in both the following cases: (i) $L^1_\mathrm{a}(E )$  is endowed of the bounded weak* topology and $w_\pi $ vanishes at infinity, (ii) $L^1_\mathrm{a}(E )$  is endowed of the strong $L^1$--topology and $w_\pi$ is bounded. Hence, due to the contraction principle and  Theorem  \ref{LDP:misura+cor},  we only need to prove  that the rate function $\imath ( \cdot)$ is convex
and that GC  symmetry \eqref{2010} is fulfilled.  The last property follows from Theorem \ref{iacopo_ale}.
The convexity follows easily from the fact that $\tilde I (\mu,J)$ is convex and the constraint $\langle J,w_\pi\rangle = 2 u$ is linear in $J$.
\end{proof}

The Gallavotti-Cohen functional is  defined in \cite{LS} by replacing the function $w_\pi$ above with $w(y,z) =
\log[ r(y,z) / r(z,y)]$.
 In order to be able to discuss applications
to Markov chains with infinitely many states we have chosen the
previous definition with $w_\pi$.  Note that
\begin{equation}
\frac{1}{2}\langle J_T, w _\pi \rangle - \frac{1}{2}\langle J_T,w \rangle = \frac{1}{2} \sum_{(y,z)\in E} J_T(y,z) \log \frac{\pi(y)}{\pi(z)} = \frac{1}{T} \log \frac{\pi(X_T) }{\pi(X_0)}\,.
\end{equation}
Hence,  if $V$ is finite, the term $ \log \frac{\pi(X_T) }{\pi(X_0)}$ is bounded, thus implying that
$\frac{1}{2}\langle J_T, w _\pi \rangle $ and $ \frac{1}{2}\langle J_T,w \rangle $ satisfy the same LDP.
Theorem \ref{LDP:GC}
 provides a variational
characterization of the rate function for the Gallavotti-Cohen
functional which can be compared to the rather implicit one derived
e.g.\ in \cite[with $w$ instead of $w_\pi$]{LS} by using the Perron-Frobenius and the
G\"artner-Ellis theorems.

\section{LDP for the homological coefficients and Gallavotti--Cohen symmetry}\label{sec:hom}

Also in this section we assume that the graph $G=(E,V)$ has the property
$(y,z) \in E \; \Leftrightarrow \; (z,y)\in E$ (i.e. $E=E_s$) and
extend to the infinite case the concept of \emph{cycle space}. We
refer e.g.  to \cite{AG1,AG2,FD,S} for physical applications and
e.g. to \cite{Bo,D} for a mathematical treatment in finite graphs. We
also prove that the cycle space is isomorphic to the first cellular
homological class over $\bb R $ of the graph $G$ (shortly, $H_1(G, \bb
R) $). Then we associate to each trajectory up to time $T$ a cycle
$\mc C_T$ and prove a LDP for the empirical homological coefficients,
which are given by the coefficients in a given basis of the cycle $\mc
C_T$ thought of as element of the cycle space, and therefore of
$H_1(G, \bb R) $.

\subsection{Cycle space of the graph $G$} We point out that, working
with a graph $G=(V,E)$ with $E=E_s$, all information encoded in $G$
corresponds to the one encoded in its unoriented version
$G_u=(V,E_u)$, where $E_u := \bigl\{ \, \{y,z\}\,:\, (y,z) \in
E\bigr\}$.  The subscript "u" stays for \emph{unoriented}.  Hence, the
discussion that follows applies as well to unoriented graphs.

We  fix some notation.
Given an edge $e=(y,z)\in E$ we write $\bar e=(z,y)$ for the reversed edge. A  \emph{cycle} $\mc C$ in
$G$  is a finite string
$(x_1,\ldots,x_k)$ of elements of $V$ such that $(x_{i},x_{i+1})\in E$
when $i=1,\dots, k$, with the convention that $x_{k+1}=x_1$. Given a cycle
$\mc C$ and given $e\in E$ we definite $S_e( \mc C)$ as the number of times the edge $e$ appears in $\mc  C$ minus the number of times the reversed  edge $\bar e$ appears in $\mc C$:
$$ S_e (\mc C):=\sharp \{ i \,:\, 1 \leq i \leq k \,,\; (x_i, x_{i+1})=e \}-\sharp \{ i \,:\, 1 \leq i \leq k \,,\; (x_i, x_{i+1})=\bar e \}\,.$$

Consider now the free real vector space $\mc V$ generated by all cycles $\mc C$. Its elements are the formal sums $\sum _{j=1}^n a_j \mc C_j$, varying   $n \in \bb N$,  $a_j \in \bb R$ and $\mc C_j$ cycles, with the natural rules for sum and multiplication by a   constant.   The empty sum is the zero element of $\mc V$, denoted by $\emptyset$.

The \emph{cycle space} $\mc V_*$ of the graph $G$ is then defined as the  quotient vector space of $\mc V$ imposing that  in $\mc V_*$ it holds
\begin{equation}\label{scappiamo}
\sum _{j=1}^n a_j \mc C_j
= \sum _{i=1}^m b_i \mc C'_i \qquad \text{ iff } \qquad  \sum _{j=1}^n a_j S_e(\mc C_j)
= \sum _{i=1}^m b_i S_e(\mc C'_i)\;\; \forall e \in E
\end{equation}
(we keep the same notation for the elements of $\mc V$ and $\mc V_*$). More precisely,
$ \mc V_*$ is defined as the quotient $\mc V / \mc W$, where the subspace $\mc W$ is given by the sums
$\sum _{j=1}^n a_j \mc C_j
-\sum _{i=1}^m b_i \mc C'_i$ satisfying the identity system in the r.h.s. of \eqref{scappiamo}. Note that
 in the cycle space  $\mc V_*$ the cycle $\mc C=( x_1, x_2, \dots, x_k)$ equals the cycle $\mc C'=(x_i,x_{i+1}, \dots, x_k,x_1, \dots , x_{i-1})$, and that $-\mc C = (x_k, x_{k-1}, \dots, x_1)$. 

Special bases (called \emph{fundamental bases})  of $\mc V_*$ can be obtained starting from a spanning tree  $\mc T= (V, E_{\mc T})$ of the  unoriented graph $G_u=(V,E_u)$.
   Fix such a spanning tree $\mc T$. To each edge in  $E_u \setminus E_{\mc T}$  we assign an orientation and we call 
\emph{chords} the resulting oriented edges.\footnote{Usually, chords are the unoriented edges in  $E_u \setminus E_{\mc T}$  \cite{Bo,D,S}.  To avoid additional notation we have directly included in their definition a fixed orientation.}    
     To each chord $\mathfrak{c}$ we associate a cycle $\mc C_{\mathfrak{c}}\in \mc V_*$ as follows:  
   consider  the unique self-avoiding path $x_1,x_2, \dots, x_k$ in $G$ such that  $\mathfrak{c}=(x_1,x_2)$ and $\{x_i,x_{i+1}\} \in E_{\mc T}$ for all $i=2,3, \dots, k$,  and  set 
 $\mc C_{\mathfrak{c}}:= (x_1,x_2, \dots, x_k)$. Note that by construction
 \begin{equation}\label{deltino} S_{\mathfrak{c}   } (\mc C _{\mathfrak{c}'}) = \delta _{\mathfrak{c}, \mathfrak{c}'}\,.
 \end{equation}

 \begin{proposition}
Given a spanning tree $\mc T$ of the unoriented graph $G_u=(V,E_u)$, the cycles  $\mc C_{\mathfrak{c}}$ -  with $\mathfrak{c}$ varying among the chords of $\mc T$ - form a basis of the quotient space $\mc V_*$. Moreover, for each cycle $\mc C$ the following identity holds  in $\mc V_*$:
\begin{equation}\label{lampone}
\mc C= \sum _{\mathfrak{c} } S_{ \overrightarrow{\mathfrak{c}}   } \bigl( \mc C \bigr)  \mc C _{\mathfrak{c} }\,.
\end{equation}
 \end{proposition}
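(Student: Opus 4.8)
The statement has two parts: (a) the cycles $\mc C_{\mathfrak{c}}$ form a basis of $\mc V_*$, and (b) the decomposition formula \eqref{lampone}. I would prove (b) first and deduce spanning from it, then prove linear independence separately using \eqref{deltino}.

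\textbf{Step 1: Reduction to the identity system.} By the very definition of $\mc V_*$ as the quotient $\mc V/\mc W$, two formal sums of cycles agree in $\mc V_*$ if and only if they assign the same value $S_e(\cdot)$ (extended linearly) to every oriented edge $e \in E$. Note also that $S_{\bar e} = -S_e$, so it suffices to check equality of the functionals $S_e$ as $e$ ranges over a set of representatives of $E_u$, one orientation per unoriented edge; in particular over the chords together with the edges of the spanning tree. This is the workhorse observation: every claimed identity in $\mc V_*$ becomes a family of scalar identities indexed by edges.

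\textbf{Step 2: Proof of \eqref{lampone}.} Fix a cycle $\mc C = (x_1,\dots,x_k)$. I must show $S_e(\mc C) = \sum_{\mathfrak{c}} S_{\mathfrak{c}}(\mc C)\, S_e(\mc C_{\mathfrak{c}})$ for every $e \in E$. First take $e$ to be a chord $\mathfrak{c}'$: by \eqref{deltino}, $S_{\mathfrak{c}'}(\mc C_{\mathfrak{c}}) = \delta_{\mathfrak{c},\mathfrak{c}'}$, so the right-hand side collapses to $S_{\mathfrak{c}'}(\mc C)$, and the identity holds trivially. The real content is for $e = \{x,y\}$ an edge of the spanning tree $\mc T$. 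Here I would invoke the standard fundamental-cut argument: removing a tree edge $e$ disconnects $\mc T$ (hence its vertex set $V$) into two components $V_e^+$ and $V_e^-$, and for any cycle $\mc D$ the ``net flux across the cut'' $\sum_{z:(x,z)\in E,\, x \in V_e^-,\, z\in V_e^+} S_{(x,z)}(\mc D)$ vanishes, since a closed walk crosses any cut the same number of times in each direction. Writing both sides of \eqref{lampone} and projecting onto the edge $e$ via this cut functional, and using that among the edges crossing the cut exactly one is the tree edge $e$ itself while the others are chords, one gets a linear relation expressing $S_e(\mc C)$ in terms of the $S_{\mathfrak{c}}(\mc C)$ for chords $\mathfrak{c}$ crossing the cut — which is exactly what \eqref{lampone} restricted to $e$ asserts. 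This per-edge verification, combined with Step 1, gives \eqref{lampone}. As an immediate corollary, since every element of $\mc V_*$ is a linear combination of cycles and each cycle lies in the span of $\{\mc C_{\mathfrak{c}}\}$, these cycles span $\mc V_*$.

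\textbf{Step 3: Linear independence.} Suppose $\sum_{\mathfrak{c}} a_{\mathfrak{c}}\, \mc C_{\mathfrak{c}} = \emptyset$ in $\mc V_*$ (a finite sum). Apply the functional $S_{\mathfrak{c}'}$ for a fixed chord $\mathfrak{c}'$: by \eqref{deltino} this yields $a_{\mathfrak{c}'} = 0$. As $\mathfrak{c}'$ was arbitrary, all coefficients vanish, so the family is independent. Together with Step 2, $\{\mc C_{\mathfrak{c}}\}$ is a basis.

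\textbf{Main obstacle.} The only nonroutine point is the fundamental-cut computation in Step 2: one must argue cleanly that, for the tree edge $e$, the net flux of any cycle across the cut $(V_e^-, V_e^+)$ is zero, and then account correctly for which chords cross that cut and with which sign (orientation relative to the cut). This is a purely combinatorial graph-theoretic fact, standard in the finite case; since here $V$ may be infinite but cycles and the sums in \eqref{lampone} are finite, I would be slightly careful to note that each cycle meets only finitely many edges and only finitely many chords contribute, so no convergence issue arises and the finite-graph argument applies verbatim after restricting to the (finite) subgraph spanned by the cycle together with the relevant tree paths.
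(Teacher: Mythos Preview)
Your proof is correct and takes a genuinely different route from the paper. Both arguments share the linear-independence step via \eqref{deltino}, but they diverge on how spanning and the coefficient formula \eqref{lampone} are obtained. The paper proceeds constructively: it first reduces to self-avoiding cycles, then argues by induction on the number of chords appearing in the cycle, at each step replacing one chord by the corresponding tree path so that $\mc C = \bar{\mc C} - \mc C_{\mathfrak{c}_*}$ with $\bar{\mc C}$ decomposable into self-avoiding cycles with strictly fewer chords; once spanning is in hand, the formula \eqref{lampone} is read off from \eqref{deltino}. You instead verify \eqref{lampone} directly, edge by edge: for chord edges this is \eqref{deltino}, and for a tree edge $e$ you use the fundamental-cut functional associated with the bipartition $(V_e^-,V_e^+)$ obtained by deleting $e$ from $\mc T$, exploiting that the net flux of any closed walk across the cut vanishes and that the only edges crossing the cut are $e$ itself together with certain chords. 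This yields $S_e(\mc C_{\mathfrak{c}}) = -\sigma(\mathfrak{c})$ and hence $\sum_{\mathfrak{c}} S_{\mathfrak{c}}(\mc C)\,S_e(\mc C_{\mathfrak{c}}) = S_e(\mc C)$, so spanning drops out as a corollary rather than being proved first. Your approach is shorter and more algebraic, relying on the classical cut/cycle duality; the paper's approach is more elementary in that it avoids the cut lemma, at the cost of an inductive decomposition into self-avoiding pieces. Your remark that only finitely many chords contribute (since a cycle has finitely many edges) correctly dispatches the one place where the infinite setting could cause trouble.
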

 We call the above basis   $\{\mc C_{\mathfrak{c}}\,: \, \mathfrak{c} \text{ chord of } \mc T \}$ a fundamental basis associated to the spanning tree $\mc T$ (see Figure \ref{codroipo1}). Not all basis of $\mc V_*$ are fundamental, as can be seen  e.g. from the simple example given in \cite{FD}[Section 7]. Due to the above proposition and \eqref{deltino}, the linear functions $ \mc V_* \ni \sum_{i=1}^n a_i \mc C_i \mapsto \sum_{i=1}^n a_i S_{\mathfrak{c}  } ( \mc C_i )\in \bb R$, as $\mathfrak{c}$ varies among the chords,  form the dual basis of $\{\mc C_{\mathfrak{c}}\,: \, \mathfrak{c} \text{ chord of } \mc T \}$.

\begin{figure}[htbp!]
\centering
\bigskip
\bigskip
\begin{overpic}[scale=0.5]{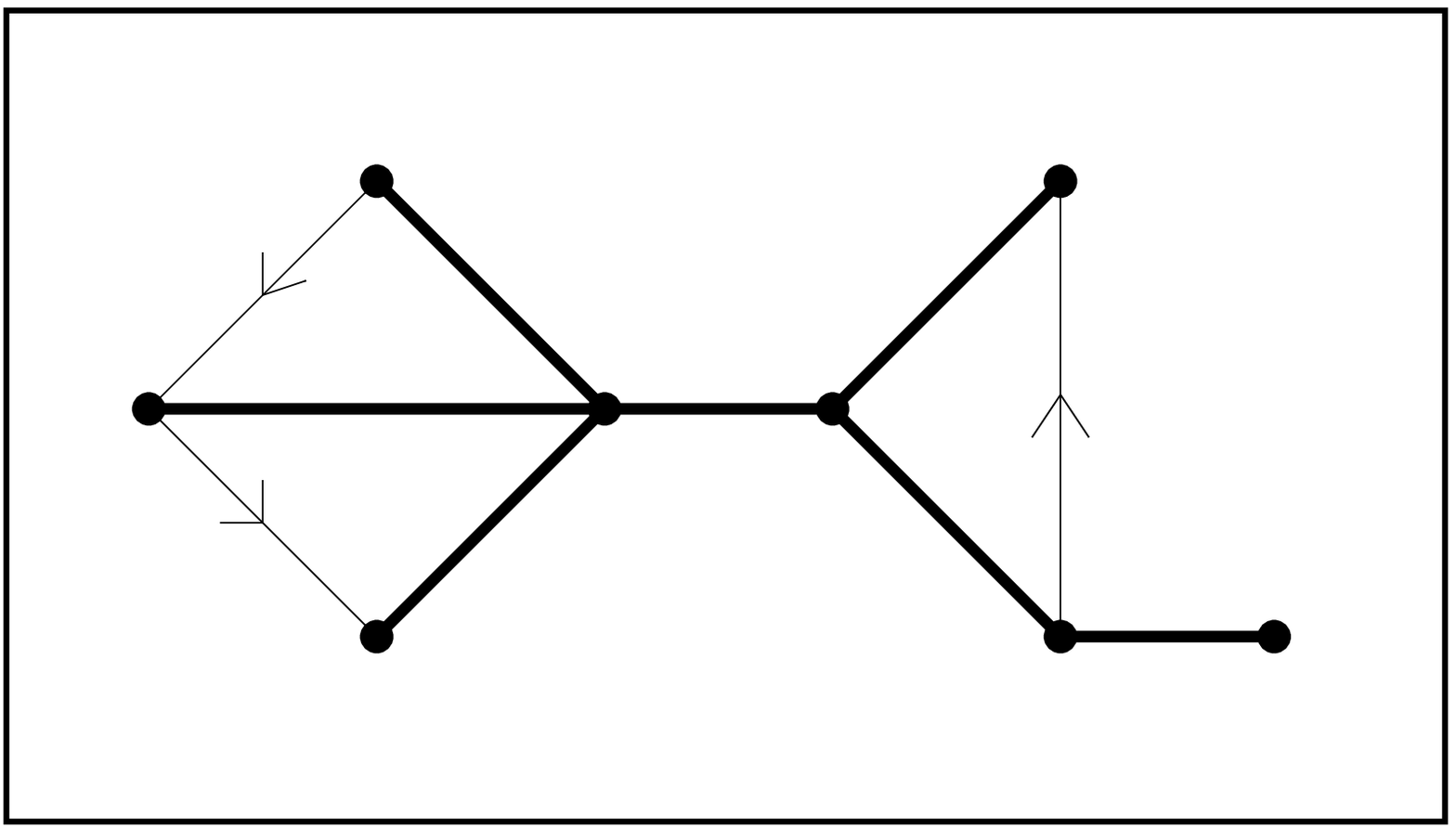}
\put(23,48) {$x_1$}
\put(6,23) {$x_2$}
\put(23,8) {$x_3$}
\put(41,23) {$x_4$}
\put(53,23) {$x_5$}
\put(72,8) {$x_6$}
\put(72,48) {$x_7$}
\put(86,8) {$x_8$}
\put(14,40){$\mathfrak{c}_1$}
\put(14,16){$\mathfrak{c}_2$}
\put(75,30) {$\mathfrak{c}_3$}
\end{overpic}
\bigskip
\caption{Fundamental basis. The bold edges form the spanning tree $\mc T$. The chords $\mathfrak{c}_1=(x_1,x_2)$,  $\mathfrak{c}_2=(x_2,x_3)$ and   $\mathfrak{c}_3=(x_6,x_7)$ correspond to the cycles $\mc C_{\mathfrak{c}_1}=(x_1,x_2,x_4)$, $ \mc C_{\mathfrak{c}_2}=(x_2,x_3,x_4)$ and $\mc C_{\mathfrak{c}_3}= (x_6,x_7,x_5)$, respectively.  }
\label{codroipo1}
\end{figure}

The above proposition is a classical result in the finite setting (cf. \cite{D} when working with the field $\bb F_2$ instead of $\bb R$). The proof for infinite graphs could be recovered by the result for finite graphs. For completeness we give a direct and self--contained proof. 

 \begin{proof} 
   If $\mc C= \sum _{i=1}^n a_i \mc C_{\mathfrak{c}_i }$ with chords
   $\mathfrak{c}_1, \mathfrak{c}_2, \dots, \mathfrak{c}_n$ all
   distinct, by applying $ S_{ {\mathfrak{c}_j} } $ and invoking
   \eqref{deltino} we get that $a_j= S_{ {\mathfrak{c}_j} } (\mc C)$.
   This proves \eqref{lampone} for any cycle $\mc C$ that is generated
   by the \emph{fundamental cycles} $\mc C_\mathfrak{c}$'s.
   We thus need to prove that these cycles form a basis.

 We first prove that the cycles $\mc C_{\mathfrak{c}}$'s are linearly
 independent.  Suppose that $\sum_{i=1}^n a_i \mc
 C_{\mathfrak{c_i}}=0$ for some constants $a_1, \dots, a_n$ and some
 chords $\mathfrak{c_1}, \dots, \mathfrak{c_n}$.  By \eqref{scappiamo}
 and \eqref{deltino} one easily gets that $a_i=0$ for all $i$, hence
 the independence.

We now prove that the cycles $\mc C_{\mathfrak{c}}$'s generate all
$\mc V_*$. To this end, it is enough to show that they generate any
cycle $\mc C$. Since any cycle $\mc C$ is in $\mc V_*$ the sum of
self--avoiding cycles, we can restrict to a self--avoiding cycle $\mc
C$, i.e. $\mc C=(x_1, x_2, \dots, x_k)$ with $x_1,x_2, \dots, x_k$ all
distinct (recall that it must be $(x_i,x_{i+1})\in E$ for all
$i=1,2,\dots, k$ with the convention $x_{k+1}=x_k$). We prove
that the self--avoiding cycle $\mc C$ can be expressed as linear
combination of the fundamental cycles $\mc C_\mathfrak{c}$'s
by induction on the cardinality of the set
 \begin{equation}\label{cavallino}
 \bigl\{ \mathfrak{c} \text{ chord }:S_{ \mathfrak{c}  }  ( \mc C) \not = 0 \bigr\}\,.
 \end{equation}
 If the above set has zero cardinality, i.e. it is empty, then,  
as $\mc C$ is self--avoiding and $\mc T$ is a tree, then $\mc C =
(x_1,x_2)$, which is indeed zero in $\mc V_*$.
Given a positive integer $m$, let us now suppose that $\mc C$ is
generated by fundamental cycles when the set \eqref{cavallino} has
cardinality less then $m$.  Take a self--avoiding cycle $\mc
C=(x_1,x_2, \dots, x_k)$ such that the set \eqref{cavallino} has
cardinality $m$ and fix a chord $ {\mathfrak{c}}_* $ inside
\eqref{cavallino}. Without restriction we can suppose that $(x_2,
x_1)= \mathfrak{c}_* $ (at cost to replace $\mc C$ by $- \mc C$ and to
relabel the points $x_1, x_2, \dots, x_n$). The cycle $\mc
C_{{\mathfrak{c}}_*}$ is then of the form $(x_2, x_1, y_3, \dots ,
y_r)$, where $\{x_1, y_3\}$, $\{y_3,y_4\}$,...,$\{y_{r-1}, y_r\}$, $\{
y_r, x_2\}$ are edges of the tree $T$. Consider now the cycle
 \[ \bar{ \mc C}:= (x_1, y_3, \dots , y_r, x_2, x_3, \dots, x_m)\,, \]
obtained by removing from $\mc C$ the edge $(x_1,x_2) $ and replacing
it with the path $ x_1, y_3, \dots , y_r$. Note that $ \mc C= \bar{\mc
  C} -\mc C_{{\mathfrak{c}}_*}$ in $\mc V_*$. By construction, 
\begin{equation}\label{bionda1}
\bigl\{ \mathfrak{c} \text{ chord }: S_{  {\mathfrak{c}}  } ( \bar{\mc C}) \not = 0 \bigr\}=
 \bigl\{ \mathfrak{c} \text{ chord }: S_{ {\mathfrak{c}}  } ( \mc C) \not = 0 \bigr\}\setminus \{\mathfrak{c}_* \}\,.
 \end{equation}
 At this point, write $\bar{\mc C}$ as sum  $\sum _{u=1}^s \bar {\mc C}_u$ of self--avoiding cycles simply by cutting $\bar {\mc C}$ at its intersection points. Since the support of $\bar{\mc C}_u$ is included in the support of $\bar {\mc C} $ we have
 \begin{equation}\label{bionda2}
 \bigl\{ \mathfrak{c} \text{ chord }: S_{ {\mathfrak{c}}  }  ( \bar{\mc C}_u) \not = 0 \bigr\}\subset
\bigl\{ \mathfrak{c} \text{ chord }: S_{  {\mathfrak{c}}  } ( \bar{\mc C}) \not = 0 \bigr\} \,,
 \end{equation}
 hence by \eqref{bionda1} the set in the l.h.s. of \eqref{bionda2} has cardinality less than $m$. By applying the inductive hypothesis we finally get
 \[ \bar{\mc C}_u= \sum _{\mathfrak{c} } S_{ {\mathfrak{c}}  }\bigl( \bar{\mc C}_u \bigr)  \mc C _{\mathfrak{c} }\,.\]
 Putting all together we then conclude
 \[
 \mc C = 
 \bar{\mc C}- \mc C_{{\mathfrak{c}}_*}
= \sum _{u=1}^s \bar {\mc C}_u - \mc C_{ {\mathfrak{c}}_*}
= \sum _{u=1}^s\sum _{\mathfrak{c} }
 S_{\mathfrak{c} }\bigl( \bar{\mc C}_u \bigr) \mc C _{\mathfrak{c}
 }- \mc C_{{\mathfrak{c}}_*}\,,
 \]
 hence $\mc C$ is a (finite) linear combination of cycles $\mc
 C_{\mathfrak{c}}$'s. By applying \eqref{deltino} one gets that
 \eqref{lampone} is satisfied. 
 \end{proof}


 \subsection{Cellular homology}
 Consider the graph $G_u= (V, E_u)$, for each unordered edge in $E_u$
 fix a canonical orientation and call $E_o$ the set of canonically
 ordered edges (the subscript ``o" stays for \emph{ordered}, or
 \emph{oriented}). In other words, $E_o$ is any subset $E_o \subset E$
 such that if $(y,z) \in E$ then either $(y,z) \in E_o$ or $(z,y)\in
 E_o$.

We recall the definition of the first cellular homology class  $H_1( G, \bb R)$ (the field $\bb R$ could be replaced by a generic ring $\bb F$). To this aim, we introduce a proper terminology: the vertexes in $V$ are  called 
 $0$--cells and  the edges in $E_o$ are  called  $1$--cells. For $k=0,1$ we define  the space $C_k(\bb R)$ of $k$--chains  as the free vector space  over $\bb R$  (in general as the free $\bb F$--module) with basis given by the $k$--cells.
  Finally, we define the boundary operator
 \[ \partial : C_1 (\bb R) \mapsto C_0 (\bb R) \]
 as the unique linear map such that $\partial (y,z)= z-y$ for any $(y,z) \in E_o$.
The first cellular homology class $H_1( G, \bb R)$ is then given by the kernel of $\partial$. We point out that the definition depends on the choice of the set $E_o$ of canonically oriented edges, but any other choice of $E_o$ would lead to a isomorphic vector space.

Since    the graph has no facets of dimension $2 $, the family of $2$--cells is empty  and the space $C_2(\bb R)$ of $2$--chains is zero, hence the boundary operator from $C_2(\bb R)$ to $C_1(\bb R)$ would be the zero map. In particular, the zero $1$--chain is the only exact chain, while the closed 1-chains form the kernel of the boundary operator
 $ \partial : C_1 (\bb R) \mapsto C_0 (\bb R) $. Hence the above definition of $H_1( G, \bb R)$ coincides indeed with the standard one, as  quotient of the closed 1-chains over the exact 1-chains.

To a  given a cycle $\mc C=(x_1, x_2, \dots, x_k)$  in $\mc V$  we associate the homological class
\begin{equation}\label{setona}
[\mc C]:=\sum _{e \in E_o} S_e ( \mc C) e
\end{equation} in $H_1( G, \bb R)$.
 Note that  the above series is indeed a finite sum and that $\partial [\mc C]=0$.

Then we have the following result:
\begin{proposition}
The linear map $\psi: \mc V \ni  \sum _{i=1}^n a_i \mc C_i \mapsto \sum _{i=1}^n a_i [\mc C_i] \in H_1( G, \bb R)$ induces  the quotient linear map
\[\phi: \mc V _* \ni  \sum _{i=1}^n a_i \mc C_i \mapsto \sum _{i=1}^n a_i [\mc C_i] \in H_1( G, \bb R)\,,\] which is a linear isomorphism.
\end{proposition}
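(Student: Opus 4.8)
The plan is to verify, one after the other, that $\psi$ passes to the quotient $\mathcal{V}_*$, that the induced map $\phi$ is injective, and that it is surjective; the first two steps are a direct unwinding of the definitions, while surjectivity is where the argument has content.

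First I would observe that, by \eqref{setona} and linearity, $\psi\bigl(\sum_i a_i\mathcal{C}_i\bigr)=\sum_{e\in E_o}\bigl(\sum_i a_i S_e(\mathcal{C}_i)\bigr)e$, so the image of $\xi=\sum_i a_i\mathcal{C}_i\in\mathcal{V}$ under $\psi$ depends on $\xi$ only through the family of numbers $\bigl(\sum_i a_i S_e(\mathcal{C}_i)\bigr)_{e\in E}$ (using $S_{\bar e}=-S_e$ to pass freely between $E_o$ and $E$). Comparing this with the defining relation \eqref{scappiamo} of $\mathcal{V}_*$ shows at once that $\psi$ is constant on each coset of the subspace $\mathcal{W}$, hence descends to a well-defined linear map $\phi\colon\mathcal{V}_*\to H_1(G,\mathbb{R})$. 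The same computation read backwards yields injectivity: if $\phi(\xi)=0$ then $\sum_i a_i S_e(\mathcal{C}_i)=0$ for all $e\in E_o$, hence for all $e\in E$, which by \eqref{scappiamo} (applied against the empty sum) means precisely $\xi=\emptyset$ in $\mathcal{V}_*$.

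For surjectivity I would invoke the fundamental basis constructed above. Fix a spanning tree $\mathcal{T}=(V,E_{\mathcal{T}})$ of $G_u$ — one exists because $G_u$ is connected by irreducibility (A3) — and, exploiting the freedom in the choice of $E_o$, take $E_o$ to consist of the chords of $\mathcal{T}$ (with their assigned orientations) together with an arbitrary orientation of each tree edge. Let $\omega=\sum_{e\in E_o}\omega_e\,e\in H_1(G,\mathbb{R})=\ker\partial$; since $1$--chains are finitely supported, only finitely many $\omega_e$ are nonzero. Put $\omega':=\omega-\sum_{\mathfrak{c}\text{ chord}}\omega_{\mathfrak{c}}\,[\mathcal{C}_{\mathfrak{c}}]$, a finite sum. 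By \eqref{deltino} the coefficient of a chord $\mathfrak{c}_0$ in $[\mathcal{C}_{\mathfrak{c}}]$ is $\delta_{\mathfrak{c}_0,\mathfrak{c}}$, so $\omega'$ has vanishing coefficient on every chord, i.e.\ $\omega'$ is supported on $E_{\mathcal{T}}$; moreover $\partial\omega'=0$ because $\partial[\mathcal{C}_{\mathfrak{c}}]=0$. It then remains to note that a finitely supported real $1$--chain carried by a forest and annihilated by $\partial$ must vanish: if not, its support is a nonempty finite forest, which has a leaf $v$, and then the $v$--coordinate of $\partial\omega'$ equals $\pm\omega'_e\neq0$ for the unique support edge $e$ at $v$, a contradiction. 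Hence $\omega'=0$ and $\omega=\phi\bigl(\sum_{\mathfrak{c}}\omega_{\mathfrak{c}}\mathcal{C}_{\mathfrak{c}}\bigr)$, so $\phi$ is onto; together with injectivity this makes $\phi$ a linear isomorphism.

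The only point that is not purely formal is this surjectivity step, and the care it requires is precisely the passage to infinite graphs: one must use that $1$--chains have finite support, that a spanning tree of the connected graph $G_u$ exists, and that the classical ``leaf of a forest'' argument still applies once the support has been cut down to a finite subtree. Everything else is just unwinding the definitions of $\mathcal{V}_*$ and of $[\mathcal{C}]$. (Equivalently, surjectivity can be phrased as saying that $\phi$ sends the basis $\{\mathcal{C}_{\mathfrak{c}}\}$ of $\mathcal{V}_*$ onto a spanning — hence, by injectivity, basis — subset of $H_1(G,\mathbb{R})$.)
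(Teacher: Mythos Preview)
Your argument is correct. The well-definedness and injectivity steps match the paper essentially word for word. For surjectivity, however, you take a genuinely different route. The paper does not use the fundamental basis at all: given $f=\sum_{e\in E_o}b_e e\in\ker\partial$, it builds a nonnegative flow $Q\in L^1_+(E)$ with $Q(e)-Q(\bar e)=b_e$, checks $\div Q=0$ from $\partial f=0$, and then invokes the cycle decomposition of divergence-free flows (Lemma~4.1 in \cite{BFG}) to write $Q=\sum_{\mc C}\alpha_{\mc C}\mathds{1}_{\mc C}$, exhibiting $\sum_{\mc C}\alpha_{\mc C}\mc C$ as a preimage. Your approach---subtract off $\sum_{\mathfrak{c}}\omega_{\mathfrak{c}}[\mc C_{\mathfrak{c}}]$ to push the chain onto the tree and kill it by the leaf argument---is more elementary and self-contained, and it makes immediate use of the fundamental basis proposition proved just above; it also shows directly that $\{[\mc C_{\mathfrak{c}}]\}$ is a basis of $H_1(G,\bb R)$. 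The paper's approach, by contrast, ties the homological isomorphism to the flow-decomposition machinery that drives the rest of the article, which is thematically satisfying but imports an external lemma. One small remark: your appeal to ``the freedom in the choice of $E_o$'' is legitimate (the paper notes different choices yield isomorphic $H_1$, and the map $\phi$ is natural under that isomorphism), but you could equally well avoid it by observing that for an arbitrary $E_o$ the coefficient of the $E_o$-representative of $\mathfrak{c}_0$ in $[\mc C_{\mathfrak{c}}]$ is $\pm\delta_{\mathfrak{c}_0,\mathfrak{c}}$, which is all the subtraction step needs.
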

 \begin{proof}
 To see that the map $\phi$ is well defined, we need to show that $\psi$ is zero on $\mc W$ (recall that $\mc V_* = \mc V / \mc W$). To this aim, given  $\sum_{i=1}^n a_i \mc C_i$  in  $\mc V$  such that $\sum_{i=1}^n a_i S_e( \mc C_i)=0 $ for any $e \in E$, we  have to prove that $\sum_{i=1}^n a_i [\mc C_i]=0$. This follows easily from  definition \eqref{setona}.

 Let us prove that $\phi$ is injective. Suppose that, for some $\sum _{i=1}^n a_i \mc C_i\in \mc V_*$, it holds $ \sum _{i=1}^n a_i [\mc C_i] =   0$. Since,  by \eqref{setona}, $ \sum _{i=1}^n a_i [\mc C_i]  = \sum _{e \in E_o} \bigl( \sum _{i=1}^n a_iS_e ( \mc C_i)\bigr)e $ (note that the series over $e \in E_o$ is   indeed a  finite sum) we conclude that $\sum _{i=1}^n a_iS_e ( \mc C_i)=0$ for any $e \in E_o$, which implies that $\sum _{i=1}^n a_i \mc C_i=0$ in $\mc V_*$ by \eqref{scappiamo}.

  Let us prove that $\phi$ is surjective. To this aim fix $f = \sum _{e \in E_0 }b_e e$ in $H_1( G, \bb R)$ (in particular, the above series over $e\in E_o$ is a finite sum).  Since $ \phi( \emptyset)=0$ we can assume $f \not =0$. We define the flow $Q\in L^1_+(E)$ as follows:
 for any $e \in E_o$ with $b_e >0 $ we put $Q(e):= b_e$, while for any $e \in E_o$ with $b_e < 0$ we put $Q(\bar e):=- b_e$, and we set the flow $Q$ equal to zero in all other edges.  By the above definition it is simple to check that $Q(e)- Q(\bar e)= b_e $ for any $e \in E_o$. We now show that $\div Q=0$. Indeed
\begin{equation*}
\begin{split} \div Q(y)&= \sum _z \bigl( Q(y,z)- Q(z,y)\bigr)\\& = \sum _{z: (y,z) \in E_o}
 \bigl( Q(y,z)- Q(z,y)\bigr) +\sum _{z: (z,y) \in E_o}
 \bigl( Q(y,z)- Q(z,y)\bigr)\\
 &= \sum _{z: (y,z) \in E_o} b_{(y,z)} - \sum _{z: (z,y) \in E_o} b_{(z,y)}\,.
 \end{split}
 \end{equation*}
      On the other hand
      the last member equals the value
      of the $0$--chain $-\partial f$ in $y$  and we know that $\partial f =0$, thus proving the zero--divergence of  $Q$. By Lemma 4.1 in \cite{BFG}  and since the flow $Q$ has finite support, we then conclude that there exist  self--avoiding cycles $\mc C_1, \mc C_2 ,\dots,  \mc C_n$
      and positive constants $a_1,a_2, \dots, a_n$ such that
      \begin{equation}\label{biauzzo}
      Q(e)= \sum _{i=1}^n a_i \mathds{1}(e \in \mc C_i)\,.
      \end{equation} In general  we write $e \in \mc C$ if $\mc C=(x_1, x_2, \dots, x_k)$ and  $e=(x_j,x_{j+1}) $ for some $j\in \{1,2, \dots , k \} $. We claim that $\phi$ maps $\sum _{i =1}^n a_i \mc C_i$, thought of as element of $\mc V_*$, to $f\in H_1( G; \bb R)$. To this aim we need to show that
      \begin{equation}\label{tagliamento}
      \sum _{i=1}^n a_iS_e ( \mc C_i)=b_e
      \end{equation}
      for any
        $e \in E_o$.   If $b_e=0$, then by construction $Q(e)=Q(\bar e)=0$, hence by \eqref{biauzzo} $e, \bar e$ are not in the support of the $\mc C_i$'s, thus implying \eqref{tagliamento}.      If $b_e >0$ then $Q(e)= b_e$ and $Q(\bar e)=0$, hence by \eqref{biauzzo}
     $ \sum _{i=1}^n a_i \mathds{1}(e \in \mc C_i)=b_e$, while
           $ \bar e$ is not in the support of the $\mc C_i$'s. This implies \eqref{tagliamento}. If $b_e <0$, then
$Q(e)= 0$ and $Q(\bar e)=-b_e$, hence by \eqref{biauzzo}  $  e$ is not in the support of the $\mc C_i$'s and   $ \sum _{i=1}^n a_i \mathds{1}(\bar e \in \mc C_i)=-b_e$.  This implies \eqref{tagliamento}.
      \end{proof}

\subsection{LDP  for the  homological coefficients}
 Given a  cycle $\mc C$ in $G$, its \emph{affinity} $\mc A( \mc C)$ is  defined as (cf. \cite{SCH})
 \begin{equation}
 \mc A( \mc C):= \sum_{j=1} ^k \log  \frac{ r(x_j, x_{j+1})}{r(x_{j+1},x_j)}= \sum_{j=1}^k \log 
   \frac{\pi(x_j)  r(x_j, x_{j+1})}{\pi(x_{j+1} )r(x_{j+1},x_j)}
 =
 \sum_{j=1} ^k w_\pi ( x_j,x_{j+1})\,,
 \end{equation}
 where $ \mc C=(x_1,x_2, \dots, x_k)$
 and $w_\pi:E \to \bb R$ is the function defined in \eqref{wpi}. Note that we can also write \[ \mc A(\mc C)= \frac{1}{2} \sum _{e \in E} S_e(\mc C) w_\pi (e)\,,\]
 hence the above affinity induces a linear map on the cycle space $\mc V_*$.

 \smallskip

 From now on we fix a spanning  tree $\mc T$ in $G_u=(V,E_u)$ and chords $\mathfrak{c}'s$.
Given distinct elements $ y\not = z$ in $V$, we call $\gamma_{y,z}$  the unique self--avoiding path 
 $y=y_1,y_2, y_3, \dots , y_n=z$   from $y$ to $z$ in the tree  $\mc T$.

 Finally we  come back to our Markov chain.   To the
 trajectory read up to time $T$, $(X_t)_{0\leq t \leq T}$, we associate the  cycle $\mc C_T$ as  follows.
Let $X_0=x_1, x_2, \dots, x_n=X_T$ be the states visited by the path  $(X_t)_{0\leq t \leq T}$, chronologically ordered. If $X_T=X_0$,   then we set  $\mc C_T:=( x_1, x_2, \dots, x_n)$. If $X_T\not=X_0$,  then $\mc C_T:=( x_1, x_2, \dots, x_n, y_2, \dots, y_m)$ where $(x_n, y_1, \dots, y_m)$ is the canonical path $\gamma _{X_T,X_0}$. Roughly speaking the cycle $\mc C_T$ is obtained by gluing the trajectory $(X_t)_{0\leq t \leq T}$ with the canonical path $\gamma _{X_T,X_0}$ and then keeping knowledge only of the visited sites (disregarding the jump times).

\medskip

Enumerating the chords as $\mathfrak{c}_k$, $k \in K$, we consider   the  fundamental basis $ \mc C_k $, $k \in K$,  where $\mc C_k:= \mc C _{\mathfrak{c}_k}$.
For  each $k \in  K$  and $T \geq 0$   we define the \emph{ empirical homological coefficient }  $ a_T(k)$ as the map
$a_T(k): D( \bb R_+, V) \mapsto \bb R$ characterized by the identity in $\mc V_*$
\begin{equation}\label{sprint}
 \mc C_T[X] = \sum _{k \in K} T a_T (k) [X] \mc C_k\,,
 \end{equation} where 
 $X=(X_t)_{t \in \bb R_+}$ and  
$\mc C_T[X]$ denotes the cycle associated to the trajectory $(X_t)_{t \in [0,T]}$.   Note that we can think of $a_T$ as a map $a_T: D( \bb R_+, V) \to L_1(K)$. We endow $L_1(K)$ with the bounded weak* topology. When $K$ is finite this  reduces to the standard $L_1$--topology. We write $ \underline{a}= \bigl( a(k)
\,:\, k \in K\}$ for a generic element of $L_1(K)$.

 Before stating our LDP for $a_T$ we give a representation result. To this aim,
  for each $k$,  let $J_k$ be the current  in $L^1_\mathrm{a}(E)$ satisfying $ J_k(e):= S_e (\mc C_k) $ for all $e \in E$.

\begin{lemma}\label{bambini} If $J \in
L^1_\mathrm{a}(E)$ has zero divergence, then $ J= \sum _{k} J(\mathfrak{c}_k ) J_k$ pointwise:  
$ J(e)= \sum _{k} J(\mathfrak{c}_k ) J_k(e)$ for all $e \in E$ and the series $ \sum _{k} J(\mathfrak{c}_k ) J_k(e)$  is absolutely convergent for all $e \in E$.
\end{lemma}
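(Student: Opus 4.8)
The plan is to verify the pointwise identity $J(e)=\sum_k J(\mathfrak c_k)\,J_k(e)$ one edge $e$ at a time, distinguishing whether $e$ is a chord (or a reversed chord) or an edge of the fixed spanning tree $\mc T$; since both sides are antisymmetric in $e$, it suffices to treat one orientation of each unoriented edge. If $e=\mathfrak c_j$ is a chord, then $J_k(e)=S_{\mathfrak c_j}(\mc C_k)=\delta_{j,k}$ by \eqref{deltino}, so the series reduces to the single term $J(\mathfrak c_j)=J(e)$ and absolute convergence is trivial. Hence all the content is in the case of a tree edge.

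So fix an oriented edge $e=(p,q)\in E$ with $\{p,q\}\in E_{\mc T}$. Removing the unoriented edge $\{p,q\}$ from $\mc T$ disconnects it into exactly two parts $V_q\ni q$ and $V_p\ni p$ with $V_p\sqcup V_q=V$, and $\{p,q\}$ is the only edge of $\mc T$ joining $V_p$ and $V_q$ (otherwise $\mc T$ would contain a cycle). Set $h:=\mathds{1}_{V_q}\colon V\to\{0,1\}$ and $\nabla h(y,z):=h(z)-h(y)$. The key combinatorial observation is that $S_e(\mc C_k)=-\nabla h(\mathfrak c_k)$ for every chord $\mathfrak c_k=(x_1,x_2)$: the chord part of $\mc C_k$ does not use the tree edge $e$, while the tree part of $\mc C_k$ is the self-avoiding path $\gamma_{x_2,x_1}$ along $\mc T$, which crosses the bridge $\{p,q\}$ exactly once (in the direction determined by which of $V_p,V_q$ contains $x_1$ and which contains $x_2$), so $S_e(\mc C_k)=S_e(\gamma_{x_2,x_1})=h(x_1)-h(x_2)=-\nabla h(\mathfrak c_k)$. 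In particular $|S_e(\mc C_k)|\le 1$, hence $\sum_k|J(\mathfrak c_k)J_k(e)|\le\sum_k|J(\mathfrak c_k)|\le\|J\|<\infty$, which gives the claimed absolute convergence.

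To evaluate the sum I would compute $\langle J,\nabla h\rangle:=\sum_{e'\in E}J(e')\,\nabla h(e')$ in two ways; this series converges absolutely since $h$ is bounded and $J\in L^1_{\mathrm a}(E)$. First, using antisymmetry of $J$ and then grouping by source vertex, $\langle J,\nabla h\rangle=-2\sum_{y\in V}h(y)\,\div J(y)=0$ because $\div J\equiv 0$. Second, grouping the same sum by unoriented edges: every tree edge other than $\{p,q\}$ has both endpoints in the same component and so contributes $0$, the edge $\{p,q\}$ contributes $2J(p,q)\,\nabla h(p,q)=2J(e)$, and the non-tree edges contribute $2\sum_k J(\mathfrak c_k)\,\nabla h(\mathfrak c_k)$ through the chosen chord representatives. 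Comparing, $0=2J(e)+2\sum_k J(\mathfrak c_k)\,\nabla h(\mathfrak c_k)$, so $\sum_k J(\mathfrak c_k)\,\nabla h(\mathfrak c_k)=-J(e)$, and therefore $\sum_k J(\mathfrak c_k)\,J_k(e)=-\sum_k J(\mathfrak c_k)\,\nabla h(\mathfrak c_k)=J(e)$. Together with the chord case, this proves the lemma.

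The only genuine obstacle is the orientation/sign bookkeeping: establishing $S_e(\mc C_k)=-\nabla h(\mathfrak c_k)$ with the right sign (the chord enters $\mc C_k$ as $(x_1,x_2)$, but the tree portion runs from $x_2$ back to $x_1$), and choosing $h=\mathds{1}_{V_q}$ consistently with the orientation $e=(p,q)$ of the tree-edge representative so that the ``boundary'' contribution is $+2J(e)$ and not $-2J(e)$. The analytic steps---justifying the two regroupings of $\langle J,\nabla h\rangle$---are routine, since $h$ takes values in $\{0,1\}$ and $J$ is absolutely summable.
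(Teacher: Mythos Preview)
Your proof is correct and takes a genuinely different route from the paper's.

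The paper argues via cycle decomposition: it passes to the nonnegative flow $Q(e)=[J(e)]_+$, invokes the decomposition Lemma~4.1 of \cite{BFG} to write $Q=\sum_{\mc C}\alpha_{\mc C}\mathds 1_{\mc C}$ as an $L^1$--convergent sum over self--avoiding cycles, expands each $S_e(\mc C)$ in the fundamental basis via \eqref{lampone}, and then checks that the resulting double series is absolutely summable so that it may be rearranged into $\sum_k J(\mathfrak c_k)J_k(e)$. Your argument instead verifies the identity edge by edge: for chord edges it is immediate from \eqref{deltino}, while for a tree edge $e=(p,q)$ you exploit that removing $\{p,q\}$ from $\mc T$ yields a cut $(V_p,V_q)$, observe that $J_k(e)=-\nabla\mathds 1_{V_q}(\mathfrak c_k)$, and then use the discrete integration--by--parts identity $\langle J,\nabla h\rangle=-2\sum_y h(y)\,\div J(y)=0$ to balance the single tree--edge contribution against the chord contributions. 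Your approach is self--contained (it does not call on the flow decomposition lemma from \cite{BFG}) and gives the absolute convergence $\sum_k|J(\mathfrak c_k)J_k(e)|\le\|J\|$ in one line; the paper's approach, on the other hand, treats all edges uniformly and makes explicit the cycle structure underlying the result.
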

\begin{proof} Let $Q(e):= [ J(e) ]_+$ for any $e \in E$, where $[x]_+:= \max \{x,0\}$. Then $Q\in L^1 (E)$ and $\div Q=0$. By Lemma 4.1 in \cite{BFG} we can write $Q= \sum _{\mc C} \alpha _{\mc C} \mathds{1}_{\mc C}$ with $\alpha _{\mc C} \geq 0$ and $\mc C$ varying among  the self--avoiding cycles (the function  $\mathds{1}_{\mc C}:E \to \{0,1\}$ is defined as  $\mathds{1}_{\mc C}(e):=\mathds{1}(e \in \mc C$)). Since
$J(e)= Q(e)- Q( \bar e) $ we have
$$ J(e)=  \sum _{\mc C} \alpha _{\mc C} \mathds{1} ( e \in \mc C)- \sum _{\mc C} \alpha _{\mc C} \mathds{1} (\bar e \in \mc C)\,,$$
and both series in the r.h.s. are  convergent (and therefore absolutely convergent). Hence we can arrange the terms as we prefer and get  the identities
\begin{equation}\label{soldi} J(e)=  \sum _{\mc C} \alpha _{\mc C}  \bigl(  \mathds{1} ( e \in \mc C)-\mathds{1} (\bar e \in \mc C))=  \sum _{\mc C}   \alpha _{\mc C} S_e(\mc C)\,,
\end{equation}
and the above series in \eqref{soldi} are absolutely convergent.
 By \eqref{lampone} we can write
$
\mc C = \sum _{k } S_{ \mathfrak{c}_k   } \bigl( \mc C \bigr)  \mc C _k$, which is 
 indeed  a finite sum. In particular, 
$ S_e(\mc C)$ is given by the finite sum $ \sum _{k } S_{ \mathfrak{c}_k  } \bigl( \mc C \bigr)  S_e(\mc C _k)$.
Coming back to \eqref{soldi} we get
\begin{equation}\label{silenzioso}J(e) =  \sum _{\mc C}   \alpha _{\mc C} \Big( \sum _{k } S_{ \mathfrak{c}_k   } \bigl( \mc C \bigr)  S_e(\mc C _k) \Big)\,.\end{equation}
Since $S_e( \mc C_k)\in \{0,-1,1\}$ we can bound (recall that $\mc C$ is self--avoiding)
\begin{equation*}
\begin{split}
\sum _{\mc C}
  \sum _{k }  | \alpha_{\mc C}  S_{ \mathfrak{c}_k  } \bigl( \mc C \bigr)  S_e(\mc C _k) |
 & \leq
 \sum _{\mc C}    \sum _{k }  \alpha_{\mc C}  |S_{ \mathfrak{c}_k   } \bigl( \mc C \bigr)  |\\
 &=\sum _{\mc C}    \sum _{k }  \alpha_{\mc C}   \bigl(  \mathds{1} ( \mathfrak{c}_k   \in \mc C)+\mathds{1} ( \bar {\mathfrak{c}_k}   \in \mc C))\\
 &
=\sum_k \sum _{\mc C}    \alpha_{\mc C}  \mathds{1} ( \mathfrak{c}_k    \in \mc C)
 +\sum_k \sum _{\mc C}    \alpha_{\mc C}  \mathds{1} ( \bar{\mathfrak{c}_k}   \in \mc C) \\
 &=\sum _k Q(  \mathfrak{c}_k )+ \sum _k Q(  \bar{\mathfrak{c}_k}) \leq  \|Q\|_1<+\infty\,.
\end{split}
\end{equation*}
Hence the series in \eqref{silenzioso} is absolutely convergent, and we can rearrange its terms getting  the following identities concerning absolutely convergent series (recall \eqref{soldi}):
$$ J(e)= \sum_k  S_e(\mc C _k)  \Big( \sum _{\mc C}   \alpha _{\mc C} S_{ \mathfrak{c}_k   } \bigl( \mc C \bigr)  \Big)= \sum_k  S_e(\mc C _k) J(\mathfrak{c}_k )= \sum_k J_k(e) J(\mathfrak{c}_k ) \,. \qedhere
$$
\end{proof}

\bigskip 
Due to \eqref{deltino} and \eqref{lampone} it holds \[
a_T (k) =\frac{1}{T} S_{\mathfrak{c}_k  } (\mc C_T)  = J_T( \mathfrak{c}_k)\,.
\]
Indeed,
since  $\gamma _{y,z}$ is the only self--avoiding  path from $y$ to $z$   inside the spanning tree $\mc T$, we have  $S_{\mathfrak{c}_k } \left(\, \gamma_{X_T,X_0} \,\right)= 0$. In conclusion, \begin{equation}\label{chiave}
 \{a_T (k ) : k \in K\}= \{ J_T( \mathfrak{c}_k) \,:\, k \in K\}\,.
 \end{equation}

We have now all the tools to prove the following result (recall the definition of $J_k \in L^1_\mathrm{a}(E)$ given before Lemma \ref{bambini}):
\begin{theorem}\label{anonimo}
 Assume the Markov chain satisfies (A1),(A2), (A3) and  Condition $C(\sigma)$ with $\sigma >0$.
    Then  the following holds:

    \begin{itemize}
    \item[(i)] As
  $T\to+\infty$ the sequence of probability measures $\{\bb P_x\circ
  a_T^{-1}\}$ on $ L^1(K)$ (endowed with the bounded weak* topology)
  satisfies a large deviation principle  with good and convex rate function $ I_c:L^1(K) \to [0,+\infty]$ such that
  \begin{equation}\label{ninnananna}
  I_c( \underline{a} ) =
  \begin{cases}
     \hat I( \sum _{k\in K} a_k J_k ) & \text{ if }   \sum_e |\sum _{k\in K} a_k J_k(e)|<+\infty\,,\\
  +\infty & \text{othewise}\,,
  \end{cases}
   \end{equation}
   where $\hat I$ is the good and convex rate function of the LDP for  the empirical current obtained  by contraction from Theorem \ref{LDP:misura+cor}.

\item[(ii)]
  Suppose in addition that   the function $w_\pi $ introduced in \eqref{wpi} is in  $C_0(E)$. If
  $\sum_e |\sum _{k\in K} a_k J_k(e)|=+\infty$, then  it holds  $I_c( \underline{a} )= I_c( -\underline{a} )=+\infty$; otherwise
  in $\bb R \cup \{ +\infty\}$ it holds
\begin{equation}\label{bigne1}
  I_c( \underline{a} )=I_c(- \underline{a} )  - \sum_e \Big(\sum _{k\in K} a_k J_k(e)\Big)w_\pi(e)\,.
\end{equation}

 \medskip  When   $\sum_{e \in E}  \sum _{k\in K} |a_k J_k(e)|<+\infty$, then \eqref{bigne1} can be rewritten as
  \begin{equation}\label{bigne2}
    I_c( \underline{a} )=I_c(- \underline{a} ) -\sum_k a_k \mc A( \mc C_k)\,,
    \end{equation}
    and the last series is indeed absolutely convergent.

\item[(iii)]   If
   the conditions of Theorem \ref{freedom} are satisfied, then the above results remain true with $L^1(K)$ endowed with the strong $L^1$--topology and $w_\pi$  bounded.

\end{itemize}
\end{theorem}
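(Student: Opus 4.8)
The plan is to deduce all three parts by contracting the large deviation principles already at our disposal along the linear projection
\[
\Pi\colon L^1_\mathrm{a}(E)\longrightarrow L^1(K),\qquad \Pi(J):=\bigl(J(\mathfrak c_k)\bigr)_{k\in K},
\]
and then to collapse the resulting variational rate function by means of Lemma~\ref{bambini}. By \eqref{chiave} we have $a_T=\Pi(J_T)$ $\bb P_x$--a.s., so everything reduces to understanding $\Pi$ together with the contraction of the empirical--current rate function $\hat I$. Since the chords are distinct edges, $\|\Pi(J)\|_{L^1(K)}=\sum_k|J(\mathfrak c_k)|\le\|J\|_{L^1(E)}$, so $\Pi$ is a linear contraction, hence continuous for the strong topologies. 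For the bounded weak$^*$ topologies one argues as in the proof of Theorem~\ref{LDP:misura+cor}: $\Pi$ maps the ball $\{\|J\|\le\ell\}$ into the corresponding ball of $L^1(K)$, and on that ball $\Pi$ is weak$^*$--to--weak$^*$ continuous since for every $g\in C_0(K)$ the map $J\mapsto\langle\Pi(J),g\rangle=\langle J,\Pi'g\rangle$ is weak$^*$ continuous, $\Pi'g\in C_0(E)$ denoting the function equal to $g(k)$ on $\mathfrak c_k$ and $0$ elsewhere; by the characterisation of bounded weak$^*$ open sets recalled in Section~\ref{s:ldef}, $\Pi$ is then continuous also for the bounded weak$^*$ topologies.

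Granting continuity of $\Pi$, the contraction principle applied to the LDP for $\{J_T\}$ with rate $\hat I$ (which follows from Theorem~\ref{LDP:misura+cor}, and in the strong--topology case also from Theorem~\ref{freedom}) gives a good convex LDP for $a_T=\Pi(J_T)$ with rate
\[
I_c(\underline a)=\inf\bigl\{\hat I(J):J\in L^1_\mathrm{a}(E),\ J(\mathfrak c_k)=a_k\ \forall k\in K\bigr\}.
\]
The key observation is that this infimum runs over at most one point: if $\hat I(J)<+\infty$ then $\div J=0$, because $\hat I(J)=\inf_\mu\tilde I(\mu,J)$ and by Theorem~\ref{LDP:misura+cor} finiteness of $\tilde I(\mu,J)$ forces $\div J=0$; Lemma~\ref{bambini} then yields $J(e)=\sum_k J(\mathfrak c_k)J_k(e)=\sum_k a_kJ_k(e)$ for every $e$, with absolutely convergent series. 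Hence the only candidate of finite rate is $J^{\underline a}:=\sum_k a_kJ_k$, and it belongs to $L^1_\mathrm{a}(E)$ precisely when $\sum_e|\sum_k a_kJ_k(e)|<+\infty$: when this fails no admissible current has finite rate and $I_c(\underline a)=+\infty$, while when it holds $I_c(\underline a)=\hat I(J^{\underline a})=\hat I(\sum_k a_kJ_k)$. This is \eqref{ninnananna}; convexity of $I_c$ is inherited from that of $\hat I$ through the linear $\Pi$.

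For part (ii) assume $w_\pi\in C_0(E)$, in particular $w_\pi\in L^\infty(E)$, so the Gallavotti--Cohen symmetry \eqref{maghetto} of Theorem~\ref{iacopo_ale} applies. If $\sum_e|\sum_k a_kJ_k(e)|=+\infty$ then $I_c(\underline a)=I_c(-\underline a)=+\infty$ by part (i). Otherwise $J^{-\underline a}=-J^{\underline a}$, so \eqref{maghetto} gives
\[
I_c(\underline a)=\hat I(J^{\underline a})=\hat I(-J^{\underline a})-\tfrac12\langle J^{\underline a},w_\pi\rangle=I_c(-\underline a)-\tfrac12\Bigl\langle\textstyle\sum_k a_kJ_k,\,w_\pi\Bigr\rangle,
\]
which is \eqref{bigne1} once $\tfrac12\langle\cdot,w_\pi\rangle$ is written as a sum over (oriented) edges; the correction term is finite since $|\langle J^{\underline a},w_\pi\rangle|\le\|J^{\underline a}\|\,\|w_\pi\|_\infty$, so the identity holds in $\bb R\cup\{+\infty\}$ (if $I_c(-\underline a)=+\infty$ then $I_c(\underline a)=+\infty$ by the symmetric use of \eqref{maghetto}). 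When moreover $\sum_e\sum_k|a_kJ_k(e)|<+\infty$, interchanging the two summations and using $\langle J_k,w_\pi\rangle=\sum_e S_e(\mc C_k)w_\pi(e)=2\mc A(\mc C_k)$ turns $\tfrac12\langle\sum_k a_kJ_k,w_\pi\rangle$ into $\sum_k a_k\mc A(\mc C_k)$, which is absolutely convergent because $\sum_k|a_k\mc A(\mc C_k)|\le\tfrac12\|w_\pi\|_\infty\sum_e\sum_k|a_kJ_k(e)|<+\infty$; this gives \eqref{bigne2}.

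Part (iii) is the same argument with the strong $L^1$ topology everywhere: $\Pi$ is a linear contraction hence strongly continuous, the strong--topology LDP for $\{J_T\}$ is provided by Theorem~\ref{LDP:misura+cor} under the hypotheses of Theorem~\ref{freedom}, and \eqref{maghetto} requires only $w_\pi\in L^\infty(E)$, so boundedness of $w_\pi$ suffices; the variational formula and its simplification are unchanged. The only genuine difficulty lies in the step invoking Lemma~\ref{bambini}: once one observes that finiteness of $\hat I$ forces zero divergence and that the lemma then determines the current uniquely, the variational formula collapses to the closed expression \eqref{ninnananna}; the continuity of $\Pi$ and the propagation of the symmetry \eqref{maghetto} are then routine.
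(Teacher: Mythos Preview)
Your proof is correct and follows the same strategy as the paper: contract the LDP for $J_T$ along the chord--evaluation map $\Pi$, then use Lemma~\ref{bambini} to see that the only divergence--free current in a given fiber is $J^{\underline a}=\sum_k a_kJ_k$, which collapses the variational formula to \eqref{ninnananna}; parts (ii) and (iii) then come from \eqref{maghetto} and Theorem~\ref{freedom} exactly as you do. The paper's own proof is terser (it simply invokes Lemma~\ref{bambini}, \eqref{chiave}, and contraction without spelling out the continuity of $\Pi$ or the collapse of the infimum), so your write--up is a faithful, more detailed version of the same argument.
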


Some comments on the above theorem:

\bigskip

{\bf Comment 1}.
Since $J_k(e)= \mathds{1}( e\in \mc C_k) -  \mathds{1}(\bar e\in \mc C_k)$ and the cycle $\mc C_k$ is self--avoiding, we have $J_k(e) \in \{-1,0,1\}$ and therefore
\begin{equation}\sum _{k\in K} |a_k J_k(e)|<+\infty \qquad \forall e \in E \end{equation}
 for any $\underline{a}\in L^1(K)$. Hence the map $E \ni e \mapsto J(e):=
 \sum _{k\in K} a_k J_k  (e)\in \bb R $ is well defined (indeed the r.h.s. is absolutely convergent) and antisymmetric. If  in addition $\sum_e |\sum _{k\in K} a_k J_k(e)|<+\infty$ then the above $J$ belongs to $L^1(E)$, hence   $J$ is a summable current in $L^1_\mathrm{a}(E)$.

\bigskip

{\bf Comment 2}.
 We have
\begin{equation}
\sum_{e \in E}  \sum _{k\in K} |a_k J_k(e)|= \sum _{k\in K} |a_k|  \ell (\mc C_k)
\end{equation}
where  $\ell (\mc C_k) $ denotes the number of edges in $\mc C_k$. Hence, the condition leading to \eqref{bigne2} can be rewritten as $\sum _{k\in K} |a_k|  \ell (\mc C_k)<+\infty$.
It is therefore useful to know if a graph admits a fundamental basis whose cycles have uniformly  bounded length. Only some partial results in this direction have been achieved in graph theory\cite{diestel}. For example, working with the field $\bb F_2$ instead of $\bb R$, the following result is proved in \cite{MH}: if a    locally finite transitive\footnote{A graph $G$ is called \emph{transitive}  if for any two vertices $v,w$ one can exhibit a graph automorphism of $G$ mapping $v$ to $w$}   graph has the property that the cycle space is generated by cycles of uniformly bounded length, then the graph must be  accessible (we refer to \cite{MH} for the terminology).  

The lattice  $\bb Z^d$ does not admit a 
 fundamental basis whose cycles have uniformly  bounded length (see the appendix). 
 Positive examples can be easily constructed.

 \bigskip

{\bf Comment 3}.
If ${\sum}_{e \in E} { \sum }_{k\in K} |a_k J_k(e)|<+\infty$ then $J$
  has zero divergence. Indeed, given $y \in V$ we have (in the third identity we use that the series is absolutely convergent)
\begin{equation*}
\begin{split}
 \div J&= \sum _{z } J(y,z)= \sum _z \Big(  \sum _{k\in K} a_k J_k  (y,z)\Big)\\
 &=  \sum _{k\in K} a_k \Big( \sum _z J_k  (y,z) \Big) = \sum _{k\in K} a_k\cdot 0 =0\,.
 \end{split}
\end{equation*} 
 
 \bigskip

{\bf Comment 4}. When working with finite graphs, one can deal  with  an arbitrary basis of the cycle space, fixing arbitrarly once and for all the paths $\gamma _{y,z}$ from $y$ to $z$ and defining the homological coefficients as in \eqref{sprint}  referred to the chosen basis. Then the above theorem remains true and \eqref{bigne2} is always satisfied (cf. \cite{FD}).

\bigskip

{\bf Comment 5}.
 For infinite graphs $G$, the LDP stated in Theorem \ref{anonimo} refers to the coefficients in a given basis of $H_1(G,\bb R)$ and is not intrinsic to $H_1(G,\bb R)$. Indeed, for suitable graphs $G$, by choosing different fundamental trees $\mc T_1$, $\mc T_2$ and associated fundamental cycle bases $\{\mc C_k^{(1)}: \, k \in \bb N_+ \}$, $\{ \mc C_k^{(2)}:\, k\in \bb N_+ \}$,
 one can exhibit a sequence of cycles $\bigl(\mc C_n\bigr)_{n \geq 1}$ with the following property: 
 setting  $\mc C_n= \sum _{k =1}^\infty a_k^{(n)} \mc C_k^{(1)} = \sum _{k =1}^\infty b_k^{(n)} \mc C_k^{(2)}$, the $n$--sequence 
 $(a_k^{(n)}\,:\, k \in \bb N_+)_{n \geq 1 }$ does not converge in $L^1(\bb N_+)$ (endowed with the bounded weak* topology), while the $n$--sequence   $(b_k^{(n)}\,:\, k \in \bb N_+)_{n\geq 1}$ does. See Figure \ref{contrario} where $(a_k^{(n)}\,:\, k \in \bb N_+)$ is the string $ (1,1, \dots, 1 , 0,0 \dots)$   with $n$ 1's, while $(b_k^{(n)}\,:\, k \in \bb N)$ is the string $(0,0, \dots,0,1,0,0 ,\dots)$ with a single 1 located at position $n$ (note that $(b_k^{(n)}\,:\, k \in \bb N)$ converges to the zero element of $L^1(\bb N_+)$ in the bounded weak* topology).

 \bigskip

 \begin{figure}[htbp!]
\centering
\begin{overpic}[scale=0.45]{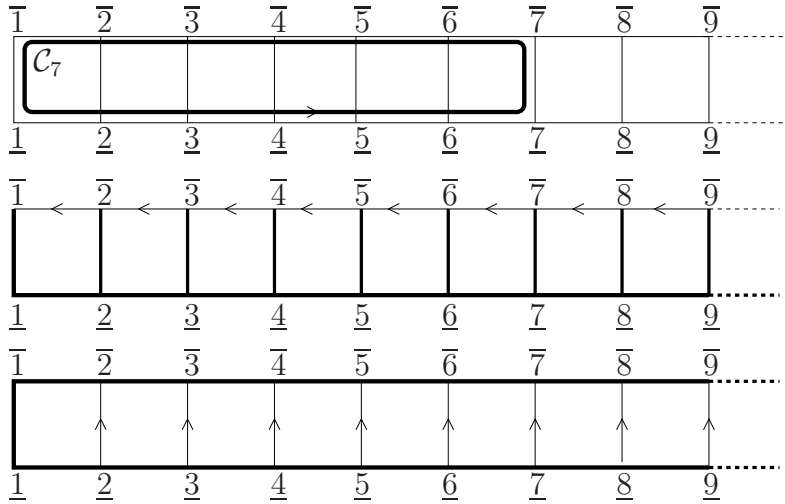}
\put(0,42){$\underline{1} $}
\put(11.2,42){$\underline{2} $}
\put(22.4,42){$\underline{3} $}
\put(33.6,42){$\underline{4} $}
\put(44.4,42){$\underline{5} $}
\put(55.7,42){$\underline{6} $}
\put(66.9,42){$\underline{7} $}
\put(78.1,42){$\underline{8} $}
\put(89.3,42){$\underline{9} $}
\put(0,57){$\overline{1} $}
\put(11.2,57){$\overline{2} $}
\put(22.4,57){$\overline{3} $}
\put(33.6,57){$\overline{4} $}
\put(44.4,57){$\overline{5} $}
\put(55.7,57){$\overline{6} $}
\put(66.9,57){$\overline{7} $}
\put(78.1,57){$\overline{8} $}
\put(89.3,57){$\overline{9} $}
\put(0,19){$\underline{1} $}
\put(11.2,19){$\underline{2} $}
\put(22.4,19){$\underline{3} $}
\put(33.6,19){$\underline{4} $}
\put(44.4,19){$\underline{5} $}
\put(55.7,19){$\underline{6} $}
\put(66.9,19){$\underline{7} $}
\put(78.1,19){$\underline{8} $}
\put(89.3,19){$\underline{9} $}
\put(0,34.6){$\overline{1} $}
\put(11.2,34.6){$\overline{2} $}
\put(22.4,34.6){$\overline{3} $}
\put(33.6,34.6){$\overline{4} $}
\put(44.4,34.6){$\overline{5} $}
\put(55.7,34.6){$\overline{6} $}
\put(66.9,34.6){$\overline{7} $}
\put(78.1,34.6){$\overline{8} $}
\put(89.3,34.6){$\overline{9} $}
\put(0,-3){$\underline{1} $}
\put(11.2,-3){$\underline{2} $}
\put(22.4,-3){$\underline{3} $}
\put(33.6,-3){$\underline{4} $}
\put(44.4,-3){$\underline{5} $}
\put(55.7,-3){$\underline{6} $}
\put(66.9,-3){$\underline{7} $}
\put(78.1,-3){$\underline{8} $}
\put(89.3,-3){$\underline{9} $}
\put(0,12.5){$\overline{1} $}
\put(11.2,12.5){$\overline{2} $}
\put(22.4,12.5){$\overline{3} $}
\put(33.6,12.5){$\overline{4} $}
\put(44.4,12.5){$\overline{5} $}
\put(55.7,12.5){$\overline{6} $}
\put(66.9,12.5){$\overline{7} $}
\put(78.1,12.5){$\overline{8} $}
\put(89.3,12.5){$\overline{9} $}
\put(3,52){$\mc C_7$}
\end{overpic}
\bigskip
\caption{\emph{Top}. The graph $G$ is  the ladder with vertex set $\{\underline{1}, \underline{2}, \dots\}\cup \{\overline{1}, \overline{2} , \dots \}$,  cycle $\mc C_n$ is given by  $(\underline{1}, \underline{2}, \dots, \underline{n}, \overline{n}, \overline{n-1}, \dots, \overline{1}$).  \emph{Center}. The fundamental tree $\mc T^{(1)}$ is the bold comb. The arrows correspond to  the chords. The basis cycle $\mc C^{(1)}_k$ is given by $(\overline{k+1}, \overline{k},\underline{k}, \underline{k+1})$. \emph{Bottom}.  The fundamental tree $\mc T^{(2)}$ is in boldface.  The basis cycle $\mc C^{(2)}_k$  equals $\mc C_k$. }
\label{contrario}
\end{figure}

\begin{proof} Item (i) as well as the first part of Item (ii)  are a  consequence of Lemma \ref{bambini}, identity \eqref{chiave} and  the LDP for the empirical current obtained  by contraction from Theorem \ref{LDP:misura+cor}.     Let us now prove  \eqref{bigne2} when $\sum_{e \in E}  \sum _{k\in K} |a_k J_k(e)|<+\infty$. By Item (i)   we have   $I_c( \underline{a} )=\hat I (J) $ and $I_c( -\underline{a} ) = \hat I(-J)$, where
$J= \sum _{k\in K} a_k J_k$ (which is indeed a  summable current with zero divergence due to Comment 3 above).
Due to \eqref{maghetto} we then have
\[ I_c( \underline{a} )=I_c(- \underline{a} ) - \frac{1}{2} \langle J, w_\pi \rangle = I_c(- \underline{a} ) - \frac{1}{2} \sum _e\Big (\sum _{k} a_k J_k(e)\Big)w_\pi(e)\,.
\]
Since $w_\pi$ is bounded and since $\sum_{e \in E}  \sum _{k\in K} |a_k J_k(e)|<+\infty$, the last series is absolutely convergent and we can rearrange it as
\[\frac{1}{2} \sum _e\Big (\sum _{k} a_k J_k(e)\Big)w_\pi(e)
= \frac{1}{2} \sum _{k} a_k \sum _e\Big( J_k(e) w_\pi(e) \Big) = \sum_k a_k \mc A(\mc C_k)\]
 By the same observations we also have
 $$ \sum_k |a_k \mc A(\mc C_k)| \leq \frac{1}{2} \sum _{k} \bigl| a_k \sum _e\left ( J_k(e) w_\pi(e) \right) \bigr| \leq \frac{\|w_\pi\|_\infty}{2} \sum _{k}\sum_e \bigl| a_k J_k(e) \bigr| <+\infty\,,$$
thus proving our thesis.

Finally, Item (iii) follows from the previous items and from Theorem \ref{freedom}.

\end{proof}


\section{Examples}
\label{s:ex}

\subsection{Markov chain with two states}

We start by the simplest possible situation: a Markov chain with
two states (a similar analysis is given in \cite{Maes2}). Let $0$ and $1$ be the two states, and denote by
$r_0=r(0,1)$ and $r_1=r(1,0)$ the corresponding jump rates. To avoid
trivialities we assume that $r_0,r_1>0$. The unique invariant  measure $\pi$ is also
reversible and is given by $\pi (0)= r_1 /(r_0+r_1)$, $\pi (1)= r_0/(r_0+r_1)$. Given $T>0$ we let $q_T :=
Q_T(0,1)+Q_T(1,0)$ be the mean  total number of jumps in the time
interval $[0,T]$. We shall here derive the large deviation principle
for the family of random variables $\{q_T\}_{T>0}$. We point out that the empirical current $J_T(0,1)$ is of order $O(1/T)$, hence the associated LDP is trivial.

By Theorem~\ref{LDP:misura+flusso} and  the contraction principle, the
family of positive random variables $\{q_T\}_{T>0}$ satisfies a large
deviation principle with rate function $f\colon\bb R_+\to \bb R_+$
given by
\begin{equation*}
  f (q) = \inf\big\{I(\mu,Q)\,:\, (\mu,Q)\in \mc P (V) \times L^+_1 (E) \;,\; Q(0,1)+Q(1,0)=q\big\}.
\end{equation*}
In view of the constraint $\div Q=0$ in \eqref{rfq} we can assume
$Q(0,1)=Q(1,0)=q/2$ and therefore
\begin{equation*}
  f(q)= \inf  \Big\{
    \Phi\big( \tfrac q2 ,\mu(0)r_0\big)+ \Phi\big(\tfrac q2 ,\mu(1)r_1\big)\,:\, \mu \in \mc P(V)
    \Big\}.
\end{equation*}
If $q=0$ we have to minimize $\mu(0) r_0+\mu(1) r_1$, getting therefore $f(0)= \min \{r_0,r_1\}$. If $q>0$, writing $\mu(0)= 1/2-\gamma$ and $\mu(1)= 1/2 + \gamma $, we need to minimize the function
$$ \psi (\gamma):=\frac{q}{2} \log \frac{q^2}{4 r_0r_1} -q + \frac{r_0+r_1}{2} + \gamma (r_1-r_0)- \frac{q}{2} \log\bigl( \frac{1}{4} - \gamma^2\bigr)
$$
over $\gamma \in [-1/2, 1/2]$.

\medskip
 Since $\psi'(\gamma)= \left[\frac{1}{4}-\gamma^2 \right] ^{-1}
\left[ (r_0-r_1) \gamma^2 + q \gamma + \frac{r_1-r_0}{4} \right]$, the optimal $\gamma$ is given by $$\left[-q+ \sqrt{q^2 +(r_0-r_1)^2} \right]/2(r_0-r_1)\,.$$
Hence the optimal $\mu$   is given by
\begin{align*}
&    \mu(0) = \frac 12 \Big( 1 +\tfrac {q}{r_0-r_1} - \frac{\sqrt{q^2 +(r_0-r_1)^2 }}{r_0-r_1}
 \Big)\,,\\
 &   \mu(1) =
 \frac 12 \Big( 1 -\tfrac {q}{r_0-r_1} +
  \frac{\sqrt{q^2 +(r_0-r_1)^2 }}{r_0-r_1}
 \Big)\,,
 \end{align*}
understanding $\mu(0)=\mu(1)=1/2$ when $r_0=r_1$.  In particular, we get
\begin{equation*}
  f(q)= \frac 12 \Big\{ q
  \log \Big[ \frac{q}{2r_0r_1}
  \bigl( \sqrt{q^2 +(r_0-r_1)^2} +q \bigr)
  \Big]
  + r_0+r_1 -q -\sqrt{q^2+(r_0-r_1)^2} \Big\}
\end{equation*}
and, in the special case $r_0=r_1=r$, $f(q)= q \log \frac{q}{r}-q+r$, which  coincides with  the
rate function of $N_T/T$ where $N_T$ is a Poisson process with
intensity $r$.
Set $\upbar{q}:= 2r_0r_1/(r_0+r_1)$ and observe that, by the law of
large numbers for the empirical flow, $q_T$ converges in probability
to $\upbar{q}$. It is simple to check that $f$ is a uniformly convex
function which achieves its minimum, as it must be the case, for
$q=\upbar{q}$.

\subsection{A random watch}
We consider the following random watch in which an hour consists of
$n$ minutes.  At time $t=0$ the minute hand is at $0$, it stays
there for an exponential time of parameter $r_0$ then it moves at
$1$, \dots, it stays at $n-1$ for   an exponential time of
parameter $r_{n-1}$ then it moves to $0$ and the hour hand advances
by one, \dots (the exponential times are all independent). Observe
that for $n>2$ the chain just defined is not reversible while for $n=2$ one recovers the previous 2 states Markov chain. The above random watch can be thought of also as a totally asymmetric random walk  on a ring with site disorder.

 Let $\mc
N_T$ be the number of hours marked by such a watch in the time
interval $[0,T]$. Taking the discrete torus $\bb T_n=\bb Z/n\bb Z$ as state space, note that $\mc N_T=\lfloor \sum _{i=0}^{n-1} TQ_T(i,i+1)/n \rfloor$, $\lfloor x \rfloor $ denoting the integer part of $x$. Hence
$\mc N_T/T$ satisfies the same large deviation principle of  $\sum _{i=0}^{n-1} Q_T(i,i+1)/n$.
  In particular, by using Theorem~\ref{LDP:misura+flusso}  and the contraction principle, we can
compute the large deviation rate function $f$ for $\mc N_T/T$. Since  the only divergence--free flows are the constant flows,
 the rate function $f : \bb R_+ \to \bb R_+$ is given by
\begin{equation}\label{matilde}
 f(q)= \inf\left \{ \sum _{i=0}^{n-1} \Phi( q, \mu_i r_i ) \,:\,   \mu_i \geq0\,, \;  \sum _{i=0}^{n-1} \mu_i=1  \right\}\,.
 \end{equation}
Trivially, $f(0)= \min \{r_i\,:\, 0 \leq i \leq n-1\}$. Let us assume $q>0$.
 Since  $\sum _{i=0}^{n-1} \Phi( q, \mu_i r_i ) \geq q \sum_{i=0}^{n-1}   \log \frac{1}{\mu_i} -C$ for a suitable constant $C$ independent from $\{\mu_i\}$, we conclude that
the above infimum is indeed achieved inside the region $\{\mu_i>0\; \forall i\}$.
Introducing the  Lagrangian multiplier $\lambda$, we first look for the extremal  points of
$$ \psi \left ( \{\mu_i\}, \zeta\right)=\sum _{i=0}^{n-1} \Phi( q , \mu_i r_i )+  \lambda \left( \sum _{i=0}^{n-1} \mu_i-1\right) \,.$$
These are characterized by the system
\begin{equation}\label{germania}
\begin{cases}
-\frac{q}{ \mu_i}+r_i + \lambda=0\,,\\
\sum _{i=0}^{n-1} \mu_i=1\,.
\end{cases}
\end{equation}
We restrict   to the region $\{\mu_i >0 \;\forall i \}$ as it must be. From the first identity we get that $\lambda >- r_\mathrm{min}$ where $r_\mathrm{min} :=\min_i r_i$.
Let $R\colon
(-r_\mathrm{min},+\infty)\to (0,+\infty)$ be the strictly increasing
function defined by
\begin{equation*}
  \frac 1{R(\lambda)} = \sum_{i=0}^{n-1} \,\frac 1{r_i+\lambda}.
\end{equation*}
We denote by $R^{-1}\colon (0,+\infty) \to
(-r_\mathrm{min},+\infty)$ the corresponding inverse function. Then the unique solution of \eqref{germania} is given by $\lambda=R^{-1}(q)$ and  $\mu_i= q/(R^{-1}(q)+r_i)$. This gives also the minimizer of \eqref{matilde}. In particular,
the large deviation rate function $f\colon \bb R_+\to
[0,+\infty)$ associated to $\mc N_T/T$  is given by
\begin{equation*}
  f(q) = \sum_{i=0}^{n-1} q \log \big( 1+ \tfrac{R^{-1}(q)}{r_i}\big)
        -R^{-1}(q)
\end{equation*}
where we understand $f(0)=r_\mathrm{min}$.

\medskip

Note that the invariant measure $\pi_i$ is given by $\pi_i= r_i^{-1} / \sum _{k=0}^{n-1} r_k^{-1}$. Hence, $\mc N_T/T$ converges in probability to $ n^{-1} \sum_{i=0}^{n-1} \pi_i r_i= R(0)$. Indeed, we have
$ f( R(0))=0$ as it must be.

Finally, we point out that $J_T(i,i+1)= Q_T(i,i+1)$, hence the large deviations for the current and for the flow coincide.

\subsection{One particle on a ring}
 Consider a homogeneous simple random walk on the discrete one dimensional torus with $N$ sites $\mathbb T_N:=\frac{\mathbb Z}{N\mathbb Z}$.
The generator of the process is
\begin{equation}
 L_Nf(x)=\lambda p \bigl[f(x+1) -f(x)\bigr] + \lambda (1-p) \bigl[ f(x-1)-f(x) \bigr]\,,
 \label{gen-1d}
\end{equation}
where $x\in \mathbb T_N$, $\lambda$ is a positive parameter and $p \in [0,1]$. We are interested
in the rate function for the empirical current $J_T(x,x+1)=Q_T(x,x+1)-Q_T(x+1,x)$. By symmetry the rate function does not  depend on $x$ (we refer to \cite{Maes3} for related results).

The rate function can be computed directly since it coincides with the rate function of
$X_T/N$ where $X_T$ is a simple random walk on $\mathbb Z$ having
generator \eqref{gen-1d}. Indeed, if for example the random walk starts at $x$, $\lfloor X_T/N \rfloor$ corresponds to the  number of  cycles made by the walker, with the rule that a clockwise cycle has weight $1$ and a unclockwise cycle has weight $-1$. In particular,  $TJ_T(x,x+1)$ differs from  $\lfloor X_T/N \rfloor$ by at most one, hence $|J_T(x,x+1) -X_T/NT| \leq 2/T$. Note that for $p=\pm 1$, we have $X_T= \pm N_T$, $(N_t)_{t \in \bb R_+}$ being a Poisson process of parameter $\l$. To simplify the treatment below, we restrict to $p\in (0,1)$ excluding the trivial cases $p=\pm 1$.

The rate function of  $X_T/N$  can be easily computed by means of  G\"artner-Ellis Theorem using the representation
$$
X_T=\sum_{i=1}^{N_T}Y_i\,,
$$
where $(N)_{t \in \bb R_+}$ is a Poisson process of parameter $\lambda$ and $Y_i$ are independent i.i.d.
random variables taking values $ 1,-1$ with  probability $p,1-p$, respectively.
We have
that the corresponding rate function $W_N$ is obtained as
\begin{equation}
W_N(j)=\sup_{\alpha\in \bb R} \left\{j\alpha-\Lambda_N(\alpha)\right\}\,,
\label{morbistenza}
\end{equation}
where
\begin{equation}
\begin{split}
 \Lambda_N(\alpha)&=\lim_{T\to +\infty}\frac 1T\log \mathbb E\left(e^{\frac{\alpha X_T}{N}}\right)\\
&  =\lim_{T\to +\infty}\frac 1T\log\left(\sum_{k=0}^{+\infty}e^{-\lambda T}
\frac{\left(\lambda T\right)^k}{k!}\mathbb E\left(e^{\frac \alpha N (Y_1+\dots +Y_k}\right)\right)\\
& =\lim_{T\to +\infty}\frac 1T\log \left(\sum_{k=0}^{+\infty}e^{-\lambda T}
\frac{\left(\lambda T\right)^k}{k!}\left( p e^{\frac \alpha N}+(1-p)e^{-\frac \alpha N} \right)^k\right)\\
& =\lambda \,p \,e^{\frac \alpha N}+\lambda (1-p) e^{-\frac \alpha N} -\lambda\,.
\label{sciop}
\end{split}
\end{equation}
Putting \eqref{sciop} into \eqref{morbistenza}, one gets that the  supremum in \eqref{morbistenza} is attained at $\alpha=N \log\left( Nj/2 p \lambda+   (1/2p\lambda) \sqrt{ (Nj)^2+4p(1-p)\lambda^2}\right)$, hence
\begin{align}
W_N(j)=& Nj\log\Big(\frac{Nj}{2p\lambda}+\frac{1}{2p\lambda}\sqrt{ (Nj)^2+4p(1-p)\lambda^2}\,\Big)\nonumber\\& -
\sqrt{(Nj)^2+4p(1-p)\lambda^2}+\lambda\,.
\label{ilratetoro}
\end{align}
Note that,  when $p=1/2$ and  $\lambda=\lambda_N=\gamma N^2$ (diffusive rescaling), it holds
$$
\lim_{N\to +\infty}W_N(j)=\frac {j^2}{2\gamma}\,,
$$
in agreement with formula (58) in \cite{Maes1} for the large deviation rate function 
for  the current of a diffusion on the circle. 
\smallskip

The same result, i.e.\ the LD rate functional for $J_T(x,x+1)$,  can be obtained by  a purely variational approach.   We write $J$ for the unique 
 zero divergence   current  such that $J(x,x+1)= j$. By Theorem \ref{LDP:misura+cor} and the contraction principle, we get
 $$ W_N(j) =   \inf \left\{\tilde I (\mu,J ) \,:\,\mu \in \mc P (V)\right\}\,,
 $$ 
 where $\tilde I (\mu,J )$ has been defined in \eqref{rff_bis}.  Since $\tilde I (\cdot,J )$
is l.s.c. on the compact space $\mc P(V)$, the above infimum is obtained at some  minimizer.  
We call $\Gamma $ the   set of minimizers $ \mu \in \mc P (V)$ and observe that $\Gamma $ is convex since $\tilde I$ is convex.  As $\tilde I(\cdot, J)$ is left invariant by the  transformation  $
 \mu \to \mc T \mu$ with $\mc T \mu=\{ \mu_{y+1}\}_{y \in \bb T_N}$ if $\mu= \{\mu_y\}_{y \in \bb T_N}$,
  also $\Gamma$ is $\mc T$--invariant.   Fix $\mu \in \Gamma$. Then, $\mu, \mc T \mu, \mc T^2 \mu, \dots , \mc T^{N-1} \mu$ all belong to $\Gamma$. By convexity of $\Gamma$,  the uniform measure $\mu_*= \frac{1}{N} \sum _{j=0}^{N-1} \mc T^j \mu$ is in $\Gamma$.
 Hence, $W_N(j)= \tilde I( \mu_*,J)$ and from \eqref{rff_bis} one recovers 
 \eqref{ilratetoro}.

\subsection{Birth and death chains}\label{ex:bd}

Consider the birth and death Markov chain on $\bb Z_+=\{0,1,2,\dots
\}$ with rates $r(k,k+1)= b_k>0$ for $k \geq 0$ and $r(k,k-1) = d_k>0$ for $k
\geq 1$. This chain has been treated in details in \cite{BFG}.  Here
we restrict to investigate when the joint LDP for the empirical
measure and flow holds with the $L^1$--strong topology instead of the
bounded weak* topology.

As proved in \cite{BFG}[Sec. 9], if $\lim_{k \to \infty} d_k=+\infty$
and $\varlimsup _{k \to \infty}b_k/d_k < 1$, then Condition
$C(\sigma)$ is satisfied for some $\sigma>0$ (as well the basic
assumptions (A1),...,(A4)). Then the following holds

\begin{proposition}\label{covalent} 
  Suppose that $\lim_{k \to \infty} d_k=+\infty$ and $\varlimsup _{k
    \to \infty}b_k/d_k < 1$.
\begin{itemize}
\item[(i)]If $\lim_{k\to \infty} b_k/d_k =0 $, then the joint LDP for
  $(\mu_T, Q_T)$ holds with $L^1_+(E)$ endowed with the strong topology;
\item[(ii)] If $\varliminf_{k\to \infty} b_{k}/d_k >0 $, then the joint
  LDP for $(\mu_T, Q_T)$ does not hold with $L^1_+(E)$ endowed with the
  strong topology.
\end{itemize}
\end{proposition}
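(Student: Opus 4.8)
The plan is to deduce~(i) from Theorem~\ref{freedom} and to prove~(ii) by exhibiting a level set of the rate function $I$ that fails to be compact in the strong topology.

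\emph{Part (i).} I would apply Theorem~\ref{freedom} with $\hat E:=\{(k,k+1):k\ge 0\}$, the set of ``upward'' edges. Item~(i) of that theorem is immediate since $(k,k+1)\in\hat E$ for every $k$. For Item~(ii), note that $H(0)=1$ while for $k\ge 1$
\[
H(k)=\frac{b_k}{b_k+d_k}=\frac{b_k/d_k}{1+b_k/d_k}\xrightarrow[k\to\infty]{}0 ,
\]
so $H$ vanishes at infinity. For Item~(iii), fix $x\in V$, let $k_a$ be the least integer with $H(k)<a$ for all $k\ge k_a$ (it exists because $H(k)\to 0$), choose $a_0>0$ so small that $k_{a_0}>x$, and for $a<a_0$ put
\[
W=W(x,a):=\{(k,k+1):k\ge k_a\}\cup\{(k,k-1):k\ge k_a+1\}.
\]
Then $E\setminus W$ is finite (Item~(1)) and $(y,z)\in W$ forces $y\ge k_a$, hence $H(y)<a$ (Item~(2)). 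For Item~(3), given a path $x_1=x,x_2,\dots,x_n$ with consecutive edges in $E$, let $U_m,D_m$ count the upward, resp.\ downward, traversals of $\{m,m+1\}$ along the path; the edges of the path lying in $\hat E\cap W$ then number $\sum_{m\ge k_a}U_m$, and those lying in $W$ number $\sum_{m\ge k_a}(U_m+D_m)$. Since $x_1=x<k_a\le m$, the elementary identity $U_m-D_m=\mathds 1(x_n\ge m+1)-\mathds 1(x_1\ge m+1)$ gives $D_m\le U_m$ for every $m\ge k_a$, whence $\sum_{m\ge k_a}(U_m+D_m)\le 2\sum_{m\ge k_a}U_m$. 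This is \eqref{broccolo} with $\gamma=\tfrac12$, so Theorem~\ref{freedom} applies and the strong-topology LDP holds.

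\emph{Part (ii).} For a birth and death chain $\div Q=0$ forces $Q(k,k+1)=Q(k+1,k)=:c_k$, so (using $\Phi(0,p)=p$, $Q^\mu(k,k+1)=\mu(k)b_k$, $Q^\mu(k+1,k)=\mu(k+1)d_{k+1}$) we have $I(\mu,Q)=\sum_{k\ge0}\bigl[\Phi(c_k,\mu(k)b_k)+\Phi(c_k,\mu(k+1)d_{k+1})\bigr]$. Fix $\epsilon>0$ and, for $n$ large, let $Q^{(n)}$ be the flow with $c_n=\epsilon/2$ and $c_k=0$ for $k\ne n$, and let $\mu^{(n)}$ be the probability measure with $\mu^{(n)}(n)=\epsilon/(2b_n)$, $\mu^{(n)}(n+1)=\epsilon/(2d_{n+1})$ and remaining mass at $0$; one checks $\div Q^{(n)}=0$. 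Then $Q^{\mu^{(n)}}(n,n+1)=Q^{\mu^{(n)}}(n+1,n)=\epsilon/2$, so the two nonzero terms of $I$ vanish and
\[
I(\mu^{(n)},Q^{(n)})=\langle\mu^{(n)},r\rangle-\epsilon
=\mu^{(n)}(0)\,b_0+\tfrac\epsilon2\Bigl(1+\tfrac{d_n}{b_n}\Bigr)+\tfrac\epsilon2\Bigl(\tfrac{b_{n+1}}{d_{n+1}}+1\Bigr)-\epsilon .
\]
This is exactly where the hypothesis $\varliminf_k b_k/d_k>0$ (together with $\varlimsup_k b_k/d_k<1$) is used: it keeps $d_n/b_n$ and $b_{n+1}/d_{n+1}$ bounded, so there is a finite $\ell$ with $I(\mu^{(n)},Q^{(n)})\le\ell$ for all large $n$. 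But $\|Q^{(n)}\|=\epsilon$ while $Q^{(n)}\to 0$ pointwise, so $\{Q^{(n)}\}$ has no $L^1$-norm convergent subsequence; hence $\{I\le\ell\}$ is not compact in $\mc P(V)\times L^1_+(E)$ with the strong topology. Since the bounded weak* LDP of Theorem~\ref{LDP:misura+flusso} holds with good rate function $I$, uniqueness of the rate function forces any strong-topology LDP to have rate function $I$; as $I$ is not good for the strong topology, the joint LDP (with good rate function) cannot hold there. Equivalently, $\{(\mu_T,Q_T)\}$ is not exponentially tight in the strong topology: any strongly compact $K$ satisfies $\sup_{(\mu,Q)\in K}\bigl(Q(L,L+1)+Q(L+1,L)\bigr)<\epsilon$ for $L$ large, while the weak* lower bound applied to the open set $\{Q(L,L+1)+Q(L+1,L)>\epsilon/2\}$, combined with the bound above for the minimizing $(\mu,Q)$, gives $\varliminf_T\frac1T\log\bb P_x\bigl(Q_T(L,L+1)+Q_T(L+1,L)>\epsilon/2\bigr)\ge-\ell$ uniformly in large $L$.

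The main obstacle is Item~(3) of Theorem~\ref{freedom} in Part~(i): one must \emph{omit} from $W$ the single downward edge $(k_a,k_a-1)$, since otherwise a path oscillating between $k_a-1$ and $k_a$ would traverse an edge of $W$ (namely $(k_a,k_a-1)$) while never traversing an edge of $\hat E\cap W$, violating \eqref{broccolo}; with this edge removed the crossing inequality $D_m\le U_m$ for $m\ge k_a$ closes the argument. In Part~(ii) the only delicate step is the passage from ``$I$ is not strongly good'' to ``no strong LDP'', which relies on uniqueness of the rate function and on the convention that an LDP is required to carry a good rate function.
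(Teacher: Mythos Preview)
Your proof is correct and follows the paper's strategy closely. Part~(i) is essentially identical to the paper's argument: same choice of $\hat E$, same $W$ (the paper writes it as $\{(k,k+1),(k+1,k):k\ge k_*\}$, which is your set), same crossing argument yielding $\gamma=1/2$, and the same observation that the boundary edge $(k_a,k_a-1)$ must be excluded.

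For Part~(ii) you take a somewhat different route than the paper. The paper perturbs the invariant pair by setting $\mu^n=\gamma_n\pi+\delta_n/d_n+\delta_{n+1}/d_{n+1}$ and $Q^n=\gamma_n Q^\pi+\delta_{(n,n+1)}+\delta_{(n+1,n)}$, and then has to estimate four $\Phi$-terms (one of which requires extracting a subsequence along which $b_n/d_n$ stays bounded away from~$0$). Your construction --- flow supported on the single bond $\{n,n+1\}$ and the measure concentrated at $\{0,n,n+1\}$ with weights matched so that the two nonzero $\Phi$-terms vanish --- is cleaner: it reduces the rate function to $\langle\mu^{(n)},r\rangle-\epsilon$, from which boundedness follows immediately from $\varliminf b_k/d_k>0$ and $\varlimsup b_k/d_k<\infty$. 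Both constructions deliver a sequence in a sublevel set of $I$ with no strongly convergent subsequence, which is the substantive content; the paper simply concludes from this that the level sets of $I$ are not strongly compact (so the LDP with good rate function fails), which is exactly what your uniqueness argument spells out more explicitly.
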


\begin{proof}
  We first derive (i) by applying Theorem \ref{freedom} to which we
  refer for the notation.  
  We define $\hat E := \{ (k,k+1)\,:\, k \in \bb Z_+\}$.  Then $H(k) =
  b_k/(b_k+d_k)$ so that, by assumption, $\lim_{k \to \infty} H(k) =
  0$. Hence, Items (i) and (ii) of Theorem \ref{freedom} are
  satisfied. 

  Given $a>0$ and a state $x\in \bb Z_+$ , we choose $k_*\ge x$ such
  that $H(k) < a$ for any $k\ge k_*$ and define 
  $W=W(x,a): = \{ (k,k+1), \; (k+1,k) \,,\, k \ge k_*\}$. In
  particular, Items (iii.1) and (iii.2) in Theorem \ref{freedom} are
  satisfied. 

  It remains to check Item (iii.3). For any path exiting from $x$,
  given $k \geq k_*$ we get that the number of times the path uses
  the edge $(k,k+1)$ is at least the number of times the path uses the
  edge $(k+1,k)$
  (more precisely, we have equality when the path ends inside
  $[0,k]\cap \bb Z_+$
  while we have a difference of one unit if the path ends outside
  $[0,k]$). In conclusion \eqref{broccolo} is valid with
  $\gamma=1/2$.

\smallskip
To prove Item (ii) we generalize the argument used at the end of
Section 9 in \cite{BFG}. We restrict to $n $ large enough that
$1/d_n+1/d_{n+1}<1$. In this case we define
 \begin{equation*}
  \begin{split}
   \gamma_n & := 1-\frac{1}{d_n}-\frac{1}{d_{n+1}},\\
   \mu^n & := \gamma_n \, \pi
             +\frac{\delta_n}{d_n} +\frac{\delta_{n+1}}{d_{n+1} },
    \\
    Q^n  & := \gamma_n Q^\pi
             +   \delta_{(n,n+1)} +\delta_{(n+1,n)} \,.
  \end{split}
\end{equation*}
Note that $Q^n$ is divergence--free.  For all edges $(y,z)$ different
from $(n,n-1),(n,n+1), (n+1,n),(n+1,n+2)$ it holds $\Phi\bigl( Q^n
(y,z), \mu^n (y) r(y,z)\bigr)=0$ since 
$ Q^n (y,z)= \mu^n (y) r(y,z)$.

On the other hand
\begin{align*}
&
\Phi\bigl( Q^n (n,n-1), \mu^n (n) r(n,n-1)\bigr)= \Phi \bigl( q^{(1)}_n,  p^{(1)}_n\bigr)\,,\\
&   \Phi\bigl( Q^n (n,n+1), \mu^n (n) r(n,n+1)\bigr)=   \Phi \bigl( q^{(2)}_n,  p^{(2)}_n\bigr)\,,\\
&   \Phi\bigl( Q^n (n+1,n), \mu^n (n+1) r(n+1,n)\bigr)= \Phi \bigl( q^{(3)}_n,  p^{(3)}_n\bigr)\,,\\
&   \Phi\bigl( Q^n (n+1,n+2), \mu^n (n+1) r(n+1,n+2)\bigr)=  \Phi \bigl( q^{(4)}_n,  p^{(4)}_n\bigr)\,,
\end{align*}
where
\begin{align*}
&
q^{(1)}_n:= \gamma_n  Q^\pi(n,n-1)\,,&  p_n^{(1)}:=  \gamma_n Q^\pi (n,n-1)
             + 1,\\
& q^{(2)}_n:=  \gamma_nQ^\pi(n,n+1)
  +1 \,,  & p_n^{(2)}:=  \gamma_n   Q^\pi (n,n+1) + \frac{b_n}{d_n} \,,\\
& q^{(3)}_n:=   \gamma_n Q^\pi(n+1,n) +1 \,,
&  p_n^{(3)}:= \gamma_n Q^\pi (n+1,n )+ 1\,,\\
& q^{(4)}_n:= \gamma_n   Q^\pi(n+1,n+2)
             \,,
&  p_n^{(4)}:= \gamma_n Q^\pi (n+1,n+2)+ \frac{b_{n+1}}{d_{n+1} }.
\end{align*}

Trivially, $\Phi \bigl( q^{(3)}_{n}, p^{(3)}_{n}\bigr) =0$.  For $p
\geq q$ we have $0\leq \Phi(q,p) \leq p-q$; hence $\Phi \bigl(
q^{(i)}_{n}, p^{(i)}_{n}\bigr) $ is uniformly bounded for $i=1,4$.
Since $\varliminf_{k\to \infty} b_{k}/d_k \in (0,1)$, we can extract a
subsequence $\{n_k\}_{k\geq 1}$ such that $0<  c \le b_{n_{k}}/d_{n_k}
\le c'$ for some fixed $c,c'>0$ and for all $k \geq 1$.  As $Q^\pi$ is
summable then $\gamma_n Q^\pi(n,n+1)$ is uniformly bounded.  We
conclude that
$\sup_{ k \geq 1}  \Phi \bigl( q^{(2)}_{n_k},  p^{(2)}_{n_k}\bigr) <
 +\infty$. 

 We have thus shown that $\varlimsup_{k \to \infty} I(
 \mu^{n_k},Q^{n_k}) <+\infty$. We cannot therefore have a LDP with
 $L^1_+(E)$ endowed with the strong topology since the level sets of $I$
 would be compact while the sequence $\big\{ (
 \mu^{n_k},Q^{n_k})\big\}_{k \geq 1}$ is not relatively compact in 
 $L^1_+(E)$ with the strong topology.
\end{proof}


\begin{remark}
Since the only current associated to the birth--death chain  with vanishing divergence   is the zero current, the LDP for the empirical current becomes trivial.
\end{remark}

\subsection{Random walks with confining potential and external force}
We now apply some of our previous considerations to the nearest
neighbor random walk on $\bb Z^d$  with jump rates
\begin{equation}
  \label{rld}
  r(y,z) = \exp\Big\{-\frac 12 \big[ U(z)-U(y)\big] + \frac 12 F(y,z)\Big\},
  \qquad (y,z)\in E\,,
\end{equation}
where $E:= \big\{ (y,z) \in \bb Z^d\times \bb Z^d\,, \: |x-y| =1
\big\} $,  
$U\colon \bb Z^d \to \bb R$ is a function satisfying $\sum_{y\in \bb
  Z^d } \exp\{-U(y)\}<+\infty$ (in particular $U$ has compact level
sets) and $F\in L^\infty(E)$.  It is convenient to set
\begin{equation}
  \label{rld_0}
  r_0(y,z) = \exp\Big\{-\frac 12 \big[ U(z)-U(y)\big] \Big\},
  \qquad (y,z)\in E\,.
\end{equation}
Note that when $r(\cdot, \cdot)= r_0(\cdot, \cdot)$,
 the random walk is reversible with respect to the probability
$\pi = \exp\{-U\}$, where we assume that 
$U$ has been chosen so that $\pi$ is properly
normalized. 
As usual, we denote
by $r$ the holding time parameters, i.e.\ $r(y) =\sum_{z\sim y} r(y,z)$ where the
summation is carried out over the nearest neighbors of $y$.

If one regards the random  walk with rates \eqref{rld}  as a model for the position of
a charged particle in the confining potential $U$, the function $F$
is naturally interpreted as the external field.

We start discussing explosion, i.e. Assumption (A.2).  A sufficient
condition for non explosion is given by Theorem 4.6 in \cite{Var}:
explosion does not occur if there exist a constant $\gamma \geq 0$ and
a nonnegative function $G$ such that $G(x_n)\to +\infty$ when
$r(x_n)\to +\infty$ and such that (recall \eqref{Lf})
\begin{equation}\label{nonexp}
LG(y)\leq \gamma G(y)\,,\qquad  \forall y\in \bb Z^d.
\end{equation}

 Consider the function $G(y)=e^{\frac{U(y)}{2}}$. This is nonnegative
and has compact level sets. We have
\begin{eqnarray*}
& &\sum_z r(y,z)\Big(G(z)-G(y)\Big)\leq \sum_z r(y,z)G(z)\\
& &=G(y)\sum_z e^{\frac{F(y,z)}{2}}\leq 2de^{\frac{\|F\|_\infty}{2}}G(y)\,.
\end{eqnarray*}
We therefore conclude that  explosion never occurs.

To continue our investigation of the other assumptions,
 we   consider the radial and the transversal variation of the
 potential. More precisely,  when $U \in C^1 (\bb R^d)$ we  consider the orthogonal decomposition
\begin{equation}\label{tennis} \nabla U(y)=  \langle \nabla U (y), \hat y\, \rangle \,\hat y
 + W(y) \,, \qquad y \in \bb R^d\setminus\{0\}
\end{equation}
with $\hat y:= y/ |y|$   and  $ \langle y, W(y) \rangle=0$. Above 
$\langle\cdot,\cdot \rangle$ is the inner product in $\bb R^d$.

We   say that the potential $U\in C^1(\bb R^d)$ has  \emph{diverging
  radial variation  which dominates  the transversal variation} if
\begin{equation}
\lim _{|y| \to \infty}   \langle \nabla U (y), \hat y \, \rangle=+\infty \,,\label{mela1}
\end{equation} and
 \begin{equation}
    |W(y)| \leq  \frac{\alpha}{\sqrt{d} } \langle \nabla U (y), \hat y\, \rangle + C\,,\label{mela2}
 \end{equation}
for some $\alpha\in  [0,1)$ and some $C\geq 0$.
Note that if $W$ in \eqref{tennis} is bounded, then \eqref{mela2}  is trivially satisfied with $\alpha=0$.
Moreover, note that \eqref{mela1} implies that $\lim _{|y| \to \infty} U(y)/|y|=+\infty$. 

We give a criterion assuring Condition $C(\sigma)$.

\begin{lemma}[Condition $C(\sigma)$]\label{claim2}
If  $\lim_{|y|\to\infty} r_0(y)=+\infty$, then
Condition $C(\sigma)$ holds for some $\sigma >0$. In particular, if
$U\in C^1(\bb R^d)$ has  \emph{diverging  radial variation which 
dominates the transversal variation}, 
then Condition $C(\sigma)$ holds for some $\sigma >0$. 
\end{lemma}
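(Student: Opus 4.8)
The plan is to verify Condition $C(\sigma)$ directly, with the \emph{constant} sequence $u_n \equiv u$ where $u(y) := e^{U(y)/2}$, which will give $\sigma = 1$; the second assertion will then follow as soon as one checks that the $C^1$ hypothesis on $U$ forces $\lim_{|y|\to\infty} r_0(y) = +\infty$, so that the first assertion applies.

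For the first assertion, items (i)--(iii) of Condition $C(\sigma)$ are immediate: each $y$ has only $2d$ neighbours, so $\sum_z r(y,z)u(z)$ is a finite sum; $U$ is bounded below (since $\sum_y e^{-U(y)}<+\infty$ forces the level sets of $U$ to be finite), hence $u(y) \ge e^{U_{\min}/2}=:c>0$; and $u(x) \le e^{U(x)/2}=:C_x$. For (iv), $v_n := -Lu/u$ is independent of $n$, and the crucial computation uses the reversible structure of \eqref{rld}: since $r(y,z)\,e^{\frac12(U(z)-U(y))} = e^{\frac12 F(y,z)}$,
\[
\frac{Lu(y)}{u(y)}
= \sum_{z\sim y} r(y,z)\bigl(e^{\frac12(U(z)-U(y))}-1\bigr)
= \sum_{z\sim y} e^{\frac12 F(y,z)} - r(y),
\]
so $v(y) = r(y) - \sum_{z\sim y} e^{\frac12 F(y,z)}$. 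Since $F\in L^\infty(E)$ the subtracted term lies in the interval $[\,2d\,e^{-\|F\|_\infty/2},\,2d\,e^{\|F\|_\infty/2}\,]$, whence $v \ge r - C$ with $C := 2d\,e^{\|F\|_\infty/2}$, which is item (vi) with $\sigma=1$. Finally, from $e^{-\|F\|_\infty/2}r_0(y)\le r(y)\le e^{\|F\|_\infty/2}r_0(y)$ the hypothesis $r_0(y)\to+\infty$ gives $v(y)\to+\infty$, so every level set $\{v\le\ell\}\subset\{r\le\ell+C\}$ is finite, which is item (v).

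It remains to show that a potential $U\in C^1(\bb R^d)$ with diverging radial variation dominating the transversal one satisfies $r_0(y)\to+\infty$. Set $g(\xi):=\langle\nabla U(\xi),\hat\xi\rangle$, so $g(\xi)\to+\infty$ by \eqref{mela1}. Given $y$ with $|y|$ large, I would pick a canonical basis vector $e_i$ with $|\langle\hat y,e_i\rangle|$ maximal, hence $\ge 1/\sqrt d$, set $s:=-\operatorname{sgn}\langle\hat y,e_i\rangle$, and estimate the neighbour $z:=y+se_i$ via $U(y)-U(z) = -\int_0^1 s\,\partial_i U(y+tse_i)\,dt$. For $\xi$ on the unit segment from $y$ one has, by \eqref{tennis}, $s\,\partial_i U(\xi) = g(\xi)\,s\langle\hat\xi,e_i\rangle + s\langle W(\xi),e_i\rangle$; since $\hat\xi$ differs from $\hat y$ by $O(1/|y|)$ when $|\xi-y|\le 1$, for $|y|$ large one has $s\langle\hat\xi,e_i\rangle \le -\tfrac1{\sqrt d}+\delta$ with $\delta<\tfrac{1-\alpha}{2\sqrt d}$ and $g(\xi)>0$. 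Combining with $|\langle W(\xi),e_i\rangle|\le|W(\xi)|\le \tfrac{\alpha}{\sqrt d}g(\xi)+C$ from \eqref{mela2} yields $s\,\partial_i U(\xi) \le -\tfrac{1-\alpha}{2\sqrt d}\,g(\xi) + C$, and integrating gives $U(y)-U(z) \ge \tfrac{1-\alpha}{2\sqrt d}\inf_{|\xi-y|\le1}g(\xi) - C$, where the infimum tends to $+\infty$ with $|y|$ by \eqref{mela1}. Hence $r_0(y)\ge e^{\frac12(U(y)-U(z))}\to+\infty$, and the first part applies.

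The main obstacle is the estimate in the last paragraph: one must guarantee that the radial part of $\partial_i U$ dominates its transversal part \emph{in the chosen coordinate direction}. This is exactly where the hypothesis $\alpha<1$ and the normalisation $1/\sqrt d$ in \eqref{mela2} are used, together with the fact that $\hat\xi$ is close to (but not equal to) $\hat y$ for $\xi$ near a distant point $y$; the surplus $\tfrac{1-\alpha}{2\sqrt d}>0$ is precisely what absorbs the small discrepancy $|\hat\xi-\hat y|$. The rest of the argument is a routine verification.
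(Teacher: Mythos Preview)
Your proof is correct and follows essentially the same route as the paper: the constant choice $u_n\equiv e^{U/2}$, the computation $v(y)=r(y)-\sum_{z\sim y}e^{F(y,z)/2}\ge r(y)-2d\,e^{\|F\|_\infty/2}$, and for the second assertion the choice of a coordinate direction $e$ with $\langle y,e\rangle\ge |y|/\sqrt d$ (equivalently your $s e_i$) together with the decomposition \eqref{tennis} and the bound \eqref{mela2} to show $U(y)-U(z)\to+\infty$. The only cosmetic difference is that the paper applies the mean value theorem at a single point $\xi$ on the segment while you integrate, and the paper bounds $\langle\hat\xi,e\rangle$ directly via $\langle\xi,e\rangle\ge |y|/\sqrt d-1$ and $|\xi|\le |y|+1$ rather than invoking $|\hat\xi-\hat y|=O(1/|y|)$; these yield the same estimate.
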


\begin{proof} We first prove the first part.
As $u_n$ we pick the constant
sequence $u =\exp\{U/2\big\}$. Items (i)--(iv) in Condition
$C(\sigma)$] then hold trivially.
Moreover,
\begin{equation*}
\begin{split}
  v(y) & = -\frac{Lu }{u} \,(y) = \sum_{z:z\sim y} r(y,z) -
  \sum_{z:z\sim y} \exp\big\{ \tfrac 12 \, F(y,z) \big\}
  \\
  & \ge r(y) -2d \, \exp\big\{ \tfrac 12 \|F\|_\infty \big\} \ge r_0(y)
  \exp\big\{ -\tfrac 12 \|F\|_\infty \big\} -2d \, \exp\big\{ \tfrac
  12 \|F\|_\infty \big\}
  \end{split}
\end{equation*}
which imply Items (v) and (vi).

Let now $U$ be as in the second part of the lemma.  Fix $y \in \bb Z^d
\setminus \{0\}$.  There must exist a unit vector $e \in \bb Z^d$ such
that $\langle y, e \rangle \geq |y|/\sqrt{d}$. Set $z= y-e$.  Then,
for some $\xi = y-s e$ and $s \in [0,1]$, we can write
\begin{equation*}
  \begin{split}
  U(y)-U(z) & =\langle \nabla U (\xi), e \rangle =\langle \nabla U (\xi),
\hat \xi \rangle \langle \hat \xi, e \rangle + \langle W(\xi), e
\rangle \\
& \geq \langle \nabla U (\xi), \hat \xi \rangle \left[\langle
  \hat \xi, e \rangle-\frac{\alpha}{\sqrt{d} } \right]-C\,,
\end{split}
\end{equation*}
where in the last bound we used \eqref{mela2}.
Since  $\langle \xi, e \rangle = \langle y, e\rangle - s  \geq  |y|/\sqrt{d}-1$ while $|\xi |\leq |y|+1$,   we conclude that
\begin{equation}\label{caffe}
 U(y)-U(z) \geq\frac{ \langle \nabla U (\xi), \hat \xi \rangle }{\sqrt{d}} \left[ \frac{|y|-\sqrt{d}}{|y|+1} - \alpha\right]-C\,.
\end{equation}
The above inequality gives a lower  bound for $r_0(y,z)$, and therefore for $r_0(y)$, which implies  that $\lim _{|y| \to \infty} r_0(y)=+\infty$ under assumption \eqref{mela1}.
\end{proof}


We now give a criterion assuring that the joint LDP of Theorem
\ref{LDP:misura+flusso} holds with $L^1_+(E)$ endowed with the strong
$L^1$--topology.
\begin{lemma}[LDP in $L^1$--strong topology]\label{claim3} Suppose that $U\in C^1(\bb R^d)$ has
 \emph{diverging  radial variation which dominates the transversal variation}.
 Consider one of the two following cases:

\medskip

{\sl Case 1}: $W$ is bounded (which automatically implies \eqref{mela2});

\medskip

{\sl Case 2}: \eqref{mela2} holds for some $\alpha \in [0,1/2)$ and
\begin{equation}\label{silurino}
\lim _{\substack{ |y|,|z| \to \infty\\ |y-z|\leq 1 }} \frac{ \langle \nabla U(y) , \hat y\,\rangle }{ \langle \nabla U(z) , \hat z\,\rangle}=1
\end{equation}

 \medskip

 Then, both in Case 1 and in Case 2,    Theorem \ref{LDP:misura+flusso} holds  with $L^1_+(E)$ endowed with the strong $L^1$--topology.
\end{lemma}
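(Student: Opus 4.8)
The plan is to apply Theorem~\ref{freedom}. Condition $C(\sigma)$ with $\sigma>0$ holds by Lemma~\ref{claim2}, since the diverging-radial-variation hypothesis is assumed in both Cases; (A1) is automatic, (A3) is the obvious irreducibility of the positive-rate nearest-neighbour walk, and (A2) was already checked above via the Lyapunov function $e^{U/2}$. So it suffices to exhibit $\hat E\subset E$ satisfying requirements (i)--(iii) of Theorem~\ref{freedom}. The guiding observation is that $\phi(y):=\|y\|_1=\sum_i|y_i|$ changes by exactly $\pm1$ along every edge of $\bb Z^d$, and $\phi(y)\to\infty$ iff $|y|\to\infty$. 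I would take $\hat E:=\{(y,z)\in E:\ \|z\|_1=\|y\|_1+1\}$, i.e.\ the edges that move some coordinate away from $0$. Requirement (i) is then immediate: from $0$ any step increases $\|\cdot\|_1$, and from $y\neq0$ one increases it by moving a nonzero coordinate away from the origin.

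For requirement (ii) fix a $\phi$-increasing edge $(y,z)$, say $z=y+\sigma e_j$ with $\sigma\in\{\pm1\}$. Writing $U(z)-U(y)=\langle\nabla U(\xi),\sigma e_j\rangle$ for a point $\xi$ on the segment $[y,z]$ and decomposing as in \eqref{tennis}, the radial contribution $\langle\nabla U(\xi),\hat\xi\rangle\,\sigma\xi_j/|\xi|$ is nonnegative (because $\sigma\xi_j\ge0$ along such an edge), so by \eqref{mela2} one has $U(z)-U(y)\ge-|W(\xi)|\ge-\tfrac{\alpha}{\sqrt d}\langle\nabla U(\xi),\hat\xi\rangle-C$. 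In Case~1, where $W$ is bounded and one may take $\alpha=0$, this gives $r(y,z)\le e^{(\|W\|_\infty+\|F\|_\infty)/2}$ uniformly in the edge; since $r(y)\ge e^{-\|F\|_\infty/2}r_0(y)\to\infty$ by Lemma~\ref{claim2} and $y$ has at most $2d$ outgoing $\hat E$-edges, $H(y)\le 2d\,e^{(\|W\|_\infty+\|F\|_\infty)/2}/r(y)\to0$. In Case~2 one keeps $\alpha<1/2$ and combines the same upper bound with the lower bound \eqref{caffe} on the inward rate $r(y,z^-)$; invoking \eqref{silurino} to replace $\langle\nabla U(\xi),\hat\xi\rangle$ by $(1+o(1))\langle\nabla U(y),\hat y\rangle$ at every $\xi$ within distance $1$ of $y$, one obtains $H(y)\le\exp\{\tfrac{\langle\nabla U(y),\hat y\rangle}{2\sqrt d}[(2\alpha-1)+o(1)]+O(1)\}$, which tends to $0$ since $\alpha<1/2$ and $\langle\nabla U(y),\hat y\rangle\to\infty$ by \eqref{mela1}.

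For requirement (iii), fix the starting point $x$ and set $\gamma=1/2$. Given small $a$, use (ii) to choose an integer $R=R(a)>\|x\|_1$ with $H(y)<a$ whenever $\|y\|_1>R$, and put $W(x,a):=\{(y,z)\in E:\ \|y\|_1>R\ \text{and}\ \|z\|_1>R\}$; then $E\setminus W(x,a)$ only involves edges incident to the finite set $\{\|y\|_1\le R\}$, so it is finite, and every edge of $W(x,a)$ has $H(y)<a$. For \eqref{broccolo}, take a path $x_1,\dots,x_n$ with $x_1=x$: a step $x_i\to x_{i+1}$ lies in $W(x,a)$ iff both endpoints have $\phi>R$, it lies in $\hat E\cap W(x,a)$ exactly when it is $\phi$-increasing from a level $\ge R+1$, and in $W(x,a)\setminus\hat E$ exactly when it is $\phi$-decreasing from a level $\ge R+2$. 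Denoting by $u_m$ and $d_m$ the numbers of upward and downward crossings of the cut separating $\{\phi\le m\}$ from $\{\phi\ge m+1\}$, the up/down-crossing identity $u_m-d_m=\mathds{1}\{\phi(x_n)\ge m+1\}-\mathds{1}\{\phi(x_1)\ge m+1\}\in\{0,1\}$ holds because $\phi(x_1)=\|x\|_1\le R<m+1$ for every $m\ge R+1$, giving $u_m\ge d_m$; summing over $m\ge R+1$ yields $\#\{\text{steps in }\hat E\cap W(x,a)\}=\sum_{m\ge R+1}u_m\ge\sum_{m\ge R+1}d_m=\#\{\text{steps in }W(x,a)\setminus\hat E\}$, hence $\#(\hat E\cap W(x,a)\text{-steps})\ge\tfrac12\#(W(x,a)\text{-steps})$, which is \eqref{broccolo} with $\gamma=1/2$. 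With (i)--(iii) in hand, Theorem~\ref{freedom} delivers the LDP of Theorem~\ref{LDP:misura+flusso} in the strong $L^1_+(E)$ topology.

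The main obstacle --- and the reason the two Cases appear --- is requirement (ii): one must check that the (small) rate along $\hat E$-edges is beaten by the divergent inward rate $r_0(y)$, which forces control of the transversal variation $W$. In Case~1 that variation is bounded and the estimate is crude; in Case~2 both rates are exponentially large and of comparable order, so one genuinely needs $\alpha<1/2$ (to keep $\tfrac{\alpha}{\sqrt d}\langle\nabla U,\hat y\rangle$ strictly below $\tfrac{1-\alpha}{\sqrt d}\langle\nabla U,\hat y\rangle$) together with the slow-variation hypothesis \eqref{silurino} in order to compare the radial gradient at the endpoints of adjacent edges. Requirement (iii) is comparatively soft once $\phi=\|\cdot\|_1$ is recognised as a height function with $\pm1$ increments, and it is the exact $\bb Z^d$ analogue of the up/down-crossing count used in the proof of Proposition~\ref{covalent}(i).
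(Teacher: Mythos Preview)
Your proof is correct and follows the same strategy as the paper's: the set $\hat E$ you pick (the $\|\cdot\|_1$--increasing edges) coincides with the paper's choice $\{(y,y+e):\langle y,e\rangle\ge 0\}$, since for a unit lattice step $e$ one has $\|y+e\|_1=\|y\|_1+1$ iff $\langle y,e\rangle\ge 0$; the verification of $H\to 0$ in the two Cases and the up/down--crossing count for requirement~(iii) with $\gamma=1/2$ are carried out in the same way. The only cosmetic difference is that the paper takes $W$ to be the edges with \emph{at least one} endpoint outside the $\ell^1$--ball (rather than both, as you do), but either choice works.
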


\begin{proof}  We apply
 Theorem \ref{freedom}   with
$$
 \hat E:=\{ (y,y+e) \in \bb Z^d \times \bb Z^d \,:\, |e |=1  \,,\; \langle y , e \rangle \geq 0 \}\,.
$$
 The validity of Item (i) of Theorem \ref{freedom} is trivial.
Let us check Item  (ii) of Theorem \ref{freedom}.
 We restrict to Case 2 (Case 1 follows the main lines and is simpler, we give some comments below).
To this aim fix $y \in \bb Z^d \setminus \{0\}$.  Take  $z \in \bb Z^d$ with $z=y+e$,  $|e|=1$ and $\langle y, e \rangle \geq 0$. Then, for some $\xi = y+s e$ and $s \in [0,1]$, we can write
$$ U(z)-U(y)=\langle \nabla U (\xi), e \rangle =\langle \nabla U (\xi), \hat \xi \rangle \langle \hat \xi, e \rangle + \langle W(\xi),
 e \rangle\,.
$$
Since $\langle \xi, e \rangle = \langle y, e\rangle + s \geq 0$, for $|y|$ large we can bound
$$ U(z)-U(y) \geq - |\langle W(\xi)| \geq - \frac{\alpha }{\sqrt{d} }  \langle \nabla U(\xi) , \hat \xi \rangle-C \,.$$
This implies for $|y| $ large  that
\begin{equation}\label{primino}
\sum _{z: (y,z) \in \hat E} r(y,z) \leq  2 d \exp \left\{   \frac{\alpha \gamma _+(y)}{2\sqrt{d} }+ \frac{\|F\|_\infty+C}{2} \right\}\,,
\end{equation}
 where
 $$ \gamma_+(y) := \sup \left\{  \langle \nabla U(\xi) , \hat \xi\, \rangle \,:\,          \xi \in \bb R^d\,,\; |\xi-y|\leq 1 \right\}\,.$$
   In Case 1 \eqref{primino} remains valid with $\frac{\alpha \gamma _+(y)}{2\sqrt{d} }$ replaced by $\sup_{i }\|W_i\|_\infty /2$.

Take $e'$ a unit vector such that $\langle y, e'\rangle \ge |y|/
\sqrt{d}$ and set $z'= z -e'$. Being in the same setting of
\eqref{caffe}, we conclude that 
  \begin{equation}\label{secondino}
    r(y,z')  \geq\exp\left\{   \frac{ \gamma_-(y) }{2\sqrt{d}} \left[
        \frac{|y|-\sqrt{d}}{|y|+1} - \alpha\right]-\frac{C}{2} -
      \frac{1}{2} \|F\|_\infty 
    \right\}\,,
 \end{equation}
 where
$$
\gamma_-(y) := \inf \left\{  \langle \nabla U(\xi) , 
\hat \xi\, \rangle \,:\,          \xi \in \bb R^d\,,\; |\xi-y|\leq 1 \right\}\,.
$$
By using \eqref{primino} and \eqref{secondino}  we get
\begin{equation}\label{cottissimi}
 H(y) \leq \frac{\sum _{z: (y,z) \in \hat E} r(y,z)}{r(y,z')} \leq C' \exp\left\{
  \frac{   \gamma_-(y)}{2 \sqrt{d} } \left(  \alpha \frac{\gamma_+(y)}{\gamma_-(y)}+\alpha 
 - \frac{|y|-\sqrt{d}}{|y|+1} 
 \right) \right\}\,.
 \end{equation}

Since the map $ \xi \to \langle \nabla U(\xi) , \hat \xi\, \rangle$ is continuous, we can write
$\gamma_+(y)=  \langle \nabla U(\xi_0) , \hat \xi_0\, \rangle$ and $\gamma_-(y)=  \langle \nabla U(\xi_1) , \hat \xi_1\, \rangle$  for suitable $\xi_0,\xi_1 $ satisfying $|\xi_0-y|, |\xi_1-y| \leq 1$. Writing
\[ \frac{\gamma_+(y)}{\gamma_-(y)}= \frac{   \langle \nabla U(\xi_0) , \hat \xi_0\, \rangle}{\langle \nabla U(y) , \hat y \, \rangle} \frac{\langle \nabla U(y) , \hat y \, \rangle}{   \langle \nabla U(\xi_1) , \hat \xi_1\, \rangle}\,,\]
by \eqref{silurino}  we deduce that $\gamma_+(y)/ \gamma_-(y)=1+o(1)$ as $|y| \to +\infty$. In particular, we can rewrite \eqref{cottissimi} as 
\[ H(y) \leq C' \exp\left\{
  \frac{   \gamma_-(y)}{2 \sqrt{d} } \bigl(  2 \alpha-1+o(1)
 \bigr)\right\}\,.\]
Using that $ \gamma_-(y) \to +\infty$ as $|y|\to +\infty$ and that $\alpha<1/2$ (we restrict to Case 2), we get Item (ii) of Theorem \ref{freedom}, i.e. that  the function $H$ defined in \eqref{defH} vanishes at infinity.
\medskip

Let us finally check Item (iii) of Theorem \ref{freedom}.
To this aim,   given a positive integer $r$,   we introduce the diamond   $B(r):=\{y \in \bb Z^d\,:\, |y|_1\leq r \}$. Given  $x \in \bb Z^d$  and $a>0$, we take $r$ large enough that $x \in B(r)$ and $\{H \geq  a\} \subset B(r-1)$ (recall that $H $  vanishes at infinity). Finally we define
$W=W(x,a)$ as the family of oriented edges  in $\bb Z^d$  not  inside  $B(r)$:
 $$W:=\{(y,z)\in E\,:\, y \not \in B(r) \text{ or } z \not \in B(r) \}\,.$$
 Trivially $W$ satisfies Items (iii.1) and (iii.2)  in Theorem \ref{freedom}.
 We
 claim that also Item (iii.3) holds:   given any path $x_1=x, x_2, x_3 \dots x_n $ of
 nearest--neighbor points in $\bb Z^d$  starting at $x$,  the number
 of its  edges   in $W \cap \hat E$ is at least $1/2$ of the
 total number of its edges  in $W$.  To prove the above
 claim it is enough to observe that, considering the pieces of the
 path in  $\{y \in \bb Z^d\,:\, |y|_1\geq r \}$, we can restrict to a
 path  $x_1, x_2, x_3 \dots x_n $  with $|x_1|_1=r$ and with $|x_i|_1
 \geq  r $ for all $i=2,\dots, n$. To prove the thesis for this path,
 we observe  that 
 $|x_{i+1} |_1=  |x_{i} |_1+1$ if $x_{i+1}-x_i \in \hat E$ while
 $|x_{i+1} |_1=  |x_{i} |_1-1$ if $x_{i+1}-x_i \not  \in \hat E$. Therefore,
 \begin{multline*} \sharp\{ i:  1\leq i <n \,,\; x_{i+1}-x _i \in \hat E \} - \sharp\{ i:  1\leq i <n \,,\; x_{i+1}-x _i \not \in \hat E \}\\ = |x_n|_1-
  |x_1|_1= |x_n|_1-r \,.
  \end{multline*}
    Since by assumption $|x_n|_1\geq r$ we get the thesis.
 \end{proof}

We next discuss some choices of the field  $F$ allowing to apply
Theorem~\ref{LDP:GC} and to deduce the large deviation principle for
the Gallavotti-Cohen functional. These hypotheses will be in the
same spirit of those introduced in \cite{bg} for continuous
diffusions. Observing that in this example it holds
$E=E_\mathrm{s}$, we restrict to the physically relevant case in which $F$ is
antisymmetric, i.e.\ $F(y,z)=-F(z,y)$, $(y,z)\in E$. We then require
that the chain with rates $r$ has the same invariant measure
$\pi=\exp\{-U\}$ as the one with rates $r_0$, that is
\begin{equation}
  \label{ort}
  \sum_{z:z\sim y} \exp\big\{-\tfrac 12 [ U(z) -U(y)] \big\}
  \, \sinh\Big( \frac 12 \, F(y,z)\Big)=0,
  \qquad \forall\: y \in V.
\end{equation}
We stress that the knowledge of $\pi$ is necessary to know the function $w_\pi$  in \eqref{wpi}, we consider here models where the external force field does not change the invariant distribution.

For simplicity  we restrict to $d=2$.
Functions $U$ and $F$ satisfying \eqref{ort} can be
easily constructed. For instance one can take $U$
``radial'', i.e.\ $U(y)=\tilde{U}(|y|_1)$ for some $\tilde{U}\colon\bb
Z_+\to \bb R$. Then the discrete vector field $F$ has to be fixed as in Fig.
\ref{fig1}. In that figure we represent the level curves of $U$ with black lines
and use arrows of different colors to represent the force field. To each color we arbitrarily associate
 a real number varying in a fixed interval $[-A,A]$
  representing the value  of the discrete vector field.
Consider an oriented edge $(y,z)$.  If in Figure \ref{fig1} there is a colored arrow from $y$ to $z$
 then $F(y,z)$ assumes the value corresponding to that color, while if there is a
 colored arrow from $z$ to $y$
 then $F(y,z)$ assumes the value corresponding to that color with a minus sign.
If there is no arrow associated either to $(y,z)$ or to $(z,y)$ then $F(y,z)=0$. Note that by construction $\|F\|_\infty$ is bounded and  \eqref{ort} is satisfied.

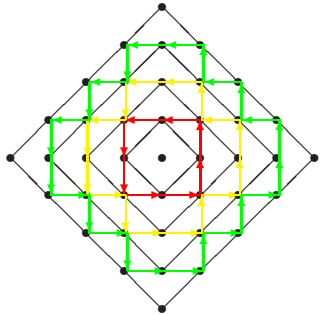
\begin{figure}
\setlength{\unitlength}{5cm}
\begin{picture}(1,1)

\put(0,0.5){\circle*{0.02}}

\put(0.1,0.4){\circle*{0.02}}
\put(0.1,0.5){\circle*{0.02}}
\put(0.1,0.6){\circle*{0.02}}

\put(0.2,0.3){\circle*{0.02}}
\put(0.2,0.4){\circle*{0.02}}
\put(0.2,0.5){\circle*{0.02}}
\put(0.2,0.6){\circle*{0.02}}
\put(0.2,0.7){\circle*{0.02}}

\put(0.3,0.2){\circle*{0.02}}
\put(0.3,0.3){\circle*{0.02}}
\put(0.3,0.4){\circle*{0.02}}
\put(0.3,0.5){\circle*{0.02}}
\put(0.3,0.6){\circle*{0.02}}
\put(0.3,0.7){\circle*{0.02}}
\put(0.3,0.8){\circle*{0.02}}

\put(0.4,0.1){\circle*{0.02}}
\put(0.4,0.2){\circle*{0.02}}
\put(0.4,0.3){\circle*{0.02}}
\put(0.4,0.4){\circle*{0.02}}
\put(0.4,0.5){\circle*{0.02}}
\put(0.4,0.6){\circle*{0.02}}
\put(0.4,0.7){\circle*{0.02}}
\put(0.4,0.8){\circle*{0.02}}
\put(0.4,0.9){\circle*{0.02}}

\put(0.5,0.2){\circle*{0.02}}
\put(0.5,0.3){\circle*{0.02}}
\put(0.5,0.4){\circle*{0.02}}
\put(0.5,0.5){\circle*{0.02}}
\put(0.5,0.6){\circle*{0.02}}
\put(0.5,0.7){\circle*{0.02}}
\put(0.5,0.8){\circle*{0.02}}

\put(0.6,0.3){\circle*{0.02}}
\put(0.6,0.4){\circle*{0.02}}
\put(0.6,0.5){\circle*{0.02}}
\put(0.6,0.6){\circle*{0.02}}
\put(0.6,0.7){\circle*{0.02}}

\put(0.7,0.4){\circle*{0.02}}
\put(0.7,0.5){\circle*{0.02}}
\put(0.7,0.6){\circle*{0.02}}

\put(0.8,0.5){\circle*{0.02}}





\put(0.4, 0.4){\line(1,1){0.1}}
\put(0.4, 0.4){\line(-1,1){0.1}}
\put(0.3, 0.5){\line(1,1){0.1}}
\put(0.5, 0.5){\line(-1,1){0.1}}

\put(0.4, 0.3){\line(1,1){0.2}}
\put(0.4, 0.3){\line(-1,1){0.2}}
\put(0.6, 0.5){\line(-1,1){0.2}}
\put(0.2, 0.5){\line(1,1){0.2}}

\put(0.4, 0.2){\line(1,1){0.3}}
\put(0.4, 0.2){\line(-1,1){0.3}}
\put(0.7, 0.5){\line(-1,1){0.3}}
\put(0.1, 0.5){\line(1,1){0.3}}

\put(0.4, 0.1){\line(1,1){0.4}}
\put(0.4, 0.1){\line(-1,1){0.4}}
\put(0.8, 0.5){\line(-1,1){0.4}}
\put(0, 0.5){\line(1,1){0.4}}

\textcolor{red}{
\put(0.4, 0.4){\vector(1,0){0.1}}
\put(0.5, 0.4){\vector(0,1){0.1}}
\put(0.5, 0.5){\vector(0,1){0.1}}
\put(0.5, 0.6){\vector(-1,0){0.1}}
\put(0.4, 0.6){\vector(-1,0){0.1}}
\put(0.3, 0.6){\vector(0,-1){0.1}}
\put(0.3, 0.5){\vector(0,-1){0.1}}
\put(0.3, 0.4){\vector(1,0){0.1}}
}

\textcolor{yellow}{
\put(0.376, 0.3){\vector(1,0){0.1}}
\put(0.276, 0.3){\vector(1,0){0.1}}
\put(0.476, 0.3){\vector(0,1){0.1}}
\put(0.476, 0.4){\vector(1,0){0.1}}
\put(0.576, 0.4){\vector(0,1){0.1}}
\put(0.576, 0.5){\vector(0,1){0.1}}
\put(0.576, 0.6){\vector(-1,0){0.1}}
\put(0.476, 0.6){\vector(0,1){0.1}}
\put(0.476, 0.7){\vector(-1,0){0.1}}
\put(0.376, 0.7){\vector(-1,0){0.1}}
\put(0.276, 0.7){\vector(0,-1){0.1}}
\put(0.276, 0.6){\vector(-1,0){0.1}}
\put(0.176, 0.6){\vector(0,-1){0.1}}
\put(0.176, 0.5){\vector(0,-1){0.1}}
\put(0.176, 0.4){\vector(1,0){0.1}}
\put(0.276, 0.4){\vector(0,-1){0.1}}
}

\textcolor{green}{
\put(0.353, 0.2){\vector(1,0){0.1}}
\put(0.453, 0.2){\vector(0,1){0.1}}
\put(0.453, 0.3){\vector(1,0){0.1}}
\put(0.553, 0.3){\vector(0,1){0.1}}
\put(0.553, 0.4){\vector(1,0){0.1}}
\put(0.653, 0.4){\vector(0,1){0.1}}
\put(0.653, 0.5){\vector(0,1){0.1}}
\put(0.653, 0.6){\vector(-1,0){0.1}}
\put(0.553, 0.6){\vector(0,1){0.1}}
\put(0.553, 0.7){\vector(-1,0){0.1}}
\put(0.453, 0.7){\vector(0,1){0.1}}
\put(0.453, 0.8){\vector(-1,0){0.1}}
\put(0.353, 0.8){\vector(-1,0){0.1}}
\put(0.253, 0.8){\vector(0,-1){0.1}}
\put(0.253, 0.7){\vector(-1,0){0.1}}
\put(0.153, 0.7){\vector(0,-1){0.1}}
\put(0.153, 0.6){\vector(-1,0){0.1}}
\put(0.053, 0.6){\vector(0,-1){0.1}}
\put(0.053, 0.5){\vector(0,-1){0.1}}
\put(0.053, 0.4){\vector(1,0){0.1}}
\put(0.153, 0.4){\vector(0,-1){0.1}}
\put(0.153, 0.3){\vector(1,0){0.1}}
\put(0.253, 0.3){\vector(0,-1){0.1}}
\put(0.253, 0.2){\vector(1,0){0.1}}
}

\end{picture}
\caption{The vector field F when $U=\tilde{U}(|x|_1)$ } \label{fig1}
\end{figure}

If instead we consider $U$ of the form $U(y)=\tilde{U}(|x|_\infty )$ for some $\tilde{U}\colon\bb
Z_+\to \bb R$ then to have \eqref{ort} we need to fix the discrete vector field $F$
as in Fig. \ref{fig2}, following the same construction as above. In both cases the discrete vector field $F$ is associated to ``rotations'' along the level
curves of $U$.

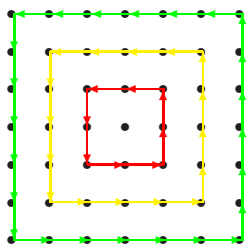
\begin{figure}
\setlength{\unitlength}{5cm}
\begin{picture}(1,1)


\put(0.1,0.2){\circle*{0.02}}
\put(0.1,0.3){\circle*{0.02}}
\put(0.1,0.4){\circle*{0.02}}
\put(0.1,0.5){\circle*{0.02}}
\put(0.1,0.6){\circle*{0.02}}
\put(0.1,0.7){\circle*{0.02}}
\put(0.1,0.8){\circle*{0.02}}

\put(0.2,0.2){\circle*{0.02}}
\put(0.2,0.3){\circle*{0.02}}
\put(0.2,0.4){\circle*{0.02}}
\put(0.2,0.5){\circle*{0.02}}
\put(0.2,0.6){\circle*{0.02}}
\put(0.2,0.7){\circle*{0.02}}
\put(0.2,0.8){\circle*{0.02}}

\put(0.3,0.2){\circle*{0.02}}
\put(0.3,0.3){\circle*{0.02}}
\put(0.3,0.4){\circle*{0.02}}
\put(0.3,0.5){\circle*{0.02}}
\put(0.3,0.6){\circle*{0.02}}
\put(0.3,0.7){\circle*{0.02}}
\put(0.3,0.8){\circle*{0.02}}

\put(0.4,0.2){\circle*{0.02}}
\put(0.4,0.3){\circle*{0.02}}
\put(0.4,0.4){\circle*{0.02}}
\put(0.4,0.5){\circle*{0.02}}
\put(0.4,0.6){\circle*{0.02}}
\put(0.4,0.7){\circle*{0.02}}
\put(0.4,0.8){\circle*{0.02}}

\put(0.5,0.2){\circle*{0.02}}
\put(0.5,0.3){\circle*{0.02}}
\put(0.5,0.4){\circle*{0.02}}
\put(0.5,0.5){\circle*{0.02}}
\put(0.5,0.6){\circle*{0.02}}
\put(0.5,0.7){\circle*{0.02}}
\put(0.5,0.8){\circle*{0.02}}

\put(0.6,0.2){\circle*{0.02}}
\put(0.6,0.3){\circle*{0.02}}
\put(0.6,0.4){\circle*{0.02}}
\put(0.6,0.5){\circle*{0.02}}
\put(0.6,0.6){\circle*{0.02}}
\put(0.6,0.7){\circle*{0.02}}
\put(0.6,0.8){\circle*{0.02}}

\put(0.7,0.2){\circle*{0.02}}
\put(0.7,0.3){\circle*{0.02}}
\put(0.7,0.4){\circle*{0.02}}
\put(0.7,0.5){\circle*{0.02}}
\put(0.7,0.6){\circle*{0.02}}
\put(0.7,0.7){\circle*{0.02}}
\put(0.7,0.8){\circle*{0.02}}


\textcolor{red}{
\put(0.4, 0.4){\vector(1,0){0.1}}
\put(0.5, 0.4){\vector(0,1){0.1}}
\put(0.5, 0.5){\vector(0,1){0.1}}
\put(0.5, 0.6){\vector(-1,0){0.1}}
\put(0.4, 0.6){\vector(-1,0){0.1}}
\put(0.3, 0.6){\vector(0,-1){0.1}}
\put(0.3, 0.5){\vector(0,-1){0.1}}
\put(0.3, 0.4){\vector(1,0){0.1}}
}

\textcolor{yellow}{
\put(0.376, 0.3){\vector(1,0){0.1}}
\put(0.476, 0.3){\vector(1,0){0.1}}
\put(0.576, 0.3){\vector(0,1){0.1}}
\put(0.576, 0.4){\vector(0,1){0.1}}
\put(0.576, 0.5){\vector(0,1){0.1}}
\put(0.576, 0.6){\vector(0,1){0.1}}
\put(0.576, 0.7){\vector(-1,0){0.1}}
\put(0.476, 0.7){\vector(-1,0){0.1}}
\put(0.376, 0.7){\vector(-1,0){0.1}}
\put(0.276, 0.7){\vector(-1,0){0.1}}
\put(0.176, 0.7){\vector(0,-1){0.1}}
\put(0.176, 0.6){\vector(0,-1){0.1}}
\put(0.176, 0.5){\vector(0,-1){0.1}}
\put(0.176, 0.4){\vector(0,-1){0.1}}
\put(0.176, 0.3){\vector(1,0){0.1}}
\put(0.276, 0.3){\vector(1,0){0.1}}
}

\textcolor{green}{
\put(0.353, 0.2){\vector(1,0){0.1}}
\put(0.453, 0.2){\vector(1,0){0.1}}
\put(0.553, 0.2){\vector(1,0){0.1}}
\put(0.653, 0.2){\vector(0,1){0.1}}
\put(0.653, 0.3){\vector(0,1){0.1}}
\put(0.653, 0.4){\vector(0,1){0.1}}
\put(0.653, 0.5){\vector(0,1){0.1}}
\put(0.653, 0.6){\vector(0,1){0.1}}
\put(0.653, 0.7){\vector(0,1){0.1}}
\put(0.653, 0.8){\vector(-1,0){0.1}}
\put(0.553, 0.8){\vector(-1,0){0.1}}
\put(0.453, 0.8){\vector(-1,0){0.1}}
\put(0.353, 0.8){\vector(-1,0){0.1}}
\put(0.253, 0.8){\vector(-1,0){0.1}}
\put(0.153, 0.8){\vector(-1,0){0.1}}
\put(0.053, 0.8){\vector(0,-1){0.1}}
\put(0.053, 0.7){\vector(0,-1){0.1}}
\put(0.053, 0.6){\vector(0,-1){0.1}}
\put(0.053, 0.5){\vector(0,-1){0.1}}
\put(0.053, 0.4){\vector(0,-1){0.1}}
\put(0.053, 0.3){\vector(0,-1){0.1}}
\put(0.053, 0.2){\vector(1,0){0.1}}
\put(0.153, 0.2){\vector(1,0){0.1}}
\put(0.253, 0.2){\vector(1,0){0.1}}
}

\end{picture}
\caption{The vector field F when $U=\tilde{U}(|x|_\infty)$} \label{fig2}
\end{figure}

The Gallavotti-Cohen functional \eqref{gcec} then becomes
\begin{equation*}
  W_T =\frac 12 \sum_{(y,z)\in E} J_T(y,z) F(y,z)
      = \sum_{y\in \bb Z^d} \sum_{i=1}^d  J_T(y,y+e_i) F(y,y+e_i)
\end{equation*}
where we used the antisymmetry of $J_T$ and $F$. In particular,
$W_T$ is naturally interpreted as the empirical power dissipated by
$F$. The  large deviation principle for the family
$\{W_T\}$ then follows from Theorem \ref{LDP:GC}. In particular, if  $F\in
C_0(E)$
 we only need to require condition $C(\sigma)$ for some $\sigma >0$ and this can be checked using the criterion given in Lemma \ref{claim2}. If  $F\in L^\infty (E)$ we need in addition to verify that the joint LDP for the empirical measure and flow holds with the $L^1$--topology instead of the bounded weak* topology for $L^1_+(E)$. This can be done by  applying Theorem \ref{freedom}, or the criterion (as well as  some variations) given in Lemma \ref{claim3}.


\appendix

\section{Geometric properties of spanning trees of $\bb Z^d$} 
We consider here the lattice $\bb Z^d$, $d \geq 2$. Trivially,  the cycle space  admits a basis given by cycles of uniformly bounded length: take the cycles $(x,x+e_i, x+e_i+e_j, x+e_j)$ where $x$ varies in $\bb Z^d$, $1\leq i < j \leq d$,  $e_i$ and $e_j$ vary among the vectors in the canonical basis of $\bb Z^d$.   Due to Comment 2 after Theorem \ref{anonimo} it is natural to ask if the lattice $\bb Z^d$  admits  
a fundamental basis   given by cycles of uniformly bounded length.  The answer is negative due to the following fact: 
\begin{proposition}
Consider a countable connected unoriented graph $\mc G= (\mc V, \mc E)$ and fix a spanning tree $\mc T$. If the fundamental cycle  basis associated to $\mc T$ has cycles with at most   $\ell+1$ vertices, then the following property holds:

Given  $a\not = b \in \mc V$ fix a path   $\gamma=(x_0,x_1, \dots, x_M)$ from $x_0=a$ to $x_M=b$.  Let 
$\gamma_{a,b}=(z_0,z_1, \dots, z_R)$ be the unique self--avoiding path inside the tree $\mc T$ from $z_0=a$ to $z_R=b$. Then for any $i: 0 \leq i \leq R$ there exists $j: 0 \leq j \leq M$ with $d(z_i,x_j) \leq \ell$, $d(\cdot, \cdot)$ being the graph distance.
\end{proposition}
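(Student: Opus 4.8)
The plan is to ``straighten'' the given path $\gamma$ into a walk that lies entirely inside the spanning tree $\mathcal T$, stays uniformly close to $\gamma$, and still runs from $a$ to $b$. Once this is achieved, the assertion will follow from the elementary fact that every walk in a tree from $a$ to $b$ passes through every vertex of the unique self--avoiding $a$--$b$ path in that tree.

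First I perform a local surgery on $\gamma$. For each $k\in\{0,\dots,M-1\}$ look at the edge $e_k:=\{x_k,x_{k+1}\}$ used by $\gamma$. If $e_k$ is a tree edge, keep it; otherwise $e_k$ is (the underlying unoriented edge of) a chord, and the associated fundamental cycle is the concatenation of $e_k$ with the unique self--avoiding path $P_k=(x_k=w^k_0,w^k_1,\dots,w^k_{m_k}=x_{k+1})$ in $\mathcal T$ joining the endpoints of $e_k$. By hypothesis that cycle has at most $\ell+1$ vertices, hence $P_k$ has at most $\ell+1$ vertices and $m_k\le\ell$ edges; in particular $d(w^k_t,x_k)\le t\le\ell$ for every $t$. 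Now let $\tilde\gamma$ be the walk obtained from $\gamma$ by replacing, for each chord $e_k$, the single step $x_k\to x_{k+1}$ by the tree path $P_k$, leaving the tree steps untouched. Since each $P_k$ runs from $x_k$ to $x_{k+1}$ through tree edges, $\tilde\gamma$ is a well--defined walk in $\mathcal T$ from $x_0=a$ to $x_M=b$, and every vertex occurring in $\tilde\gamma$ is either some $x_j$ with $0\le j\le M$, or an interior vertex $w^k_t$ of some $P_k$, in which case $d(w^k_t,x_k)\le\ell$ with $k\in\{0,\dots,M\}$. Thus every vertex of $\tilde\gamma$ lies within graph distance $\ell$ of some $x_j$.

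Finally I fix $i$ with $0\le i\le R$. If $i\in\{0,R\}$ then $z_i\in\{a,b\}$ is visited by $\tilde\gamma$. If $0<i<R$, then $\{z_{i-1},z_i\}$ is a tree edge whose removal disconnects $\mathcal T$ into a component containing $a$ (together with $z_{i-1}$) and one containing $b$ (together with $z_i$); since $\tilde\gamma$ is a walk in $\mathcal T$ from $a$ to $b$ it must traverse this edge, hence it visits $z_i$. In all cases $z_i$ coincides with a vertex of $\tilde\gamma$, which by the previous paragraph is within distance $\ell$ of some $x_j$ with $0\le j\le M$; this is the desired conclusion.

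The argument is a discrete homotopy of paths rel endpoints inside the tree, and the only point deserving attention is the cut--edge step in the last paragraph, but this is a completely standard property of trees, so I do not anticipate a real obstacle; the geometric content of the proposition — that rerouting each chord through $\mathcal T$ keeps us within distance $\ell$ — is immediate from the hypothesis on the lengths of the fundamental cycles. (One actually obtains the sharper bound $\lfloor\ell/2\rfloor$, since $d(w^k_t,\{x_k,x_{k+1}\})\le\min(t,m_k-t)\le\lfloor\ell/2\rfloor$, but the bound $\ell$ stated in the proposition is all that is needed.)
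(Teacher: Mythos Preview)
Your proof is correct and essentially identical to the paper's: both perform the same local surgery on $\gamma$, replacing each chord by its tree path to obtain a walk $\tilde\gamma$ in $\mathcal T$ whose vertices lie within distance $\ell$ of some $x_j$, and then observe that $\gamma_{a,b}$ is contained in $\tilde\gamma$. The only cosmetic difference is the final step: the paper argues that $\tilde\gamma$, viewed as a connected subgraph of $\mathcal T$, must contain the unique self--avoiding $a$--$b$ path, while you use the (equivalent) cut--edge argument that removing $\{z_{i-1},z_i\}$ separates $a$ from $b$ in $\mathcal T$; both are standard tree facts.
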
 

Since the property in the above proposition is trivially  not satisfied by the lattice $\bb Z^d$, $d \geq 2$,  we get that $\bb Z^d$ has no fundamental cycle basis with uniformly bounded length.
\begin{proof}
Consider the path 
  $\gamma=(x_0,x_1,x_2, \dots, x_M)$.  For each  $k =0,1, \dots, M-1$   either the edge $(x_k,x_{k+1}) $ belongs to  the tree $\mc T$, or  it is a chord and therefore the vertices $x_k,x_{k+1}$ have graph distance bounded by $\ell$ inside $\mc T$. We modify $\gamma $ as follows. If the edge  $(x_k,x_{k+1}) $ belongs to  the tree $\mc T$,  then keep the pair $x_k,x_{k+1}$ unchanged, otherwise replace the pair $x_k,x_{k+1}$ by the string $x_k, a_1, a_2, \dots, a_r,x_{k+1}$ given by  the unique self--avoiding path inside $\mc T$ from $x_k$ to $x_{k+1}$. We call $\gamma^{(1)}$ the resulting new path.  
Writing  $\gamma ^{(1)}  =(y_0,y_1, \dots, y_S)$, we get that  $y_0=a, y_S=b$, $\gamma^{(1)}$ lies inside the tree and that 

\begin{equation}\label{treno}
\forall  i: 0\leq i \leq S \qquad \exists j: 0 \leq j \leq M \text{ such that }d(y_i, x_j) \leq \ell\,.
\end{equation}
   The path $\gamma^{(1)}$ could have self--intersections,  anyway thought of as an unoriented  graph it is a connected subgraph of $\mc T$, hence it contains a self--avoiding  path from $a$ to  $b$, which (by definition of tree) must be $\gamma_{a,b}$. In particular, the vertices of $\gamma_{a,b}$ are of the form $y_i$ and therefore satisfy \eqref{treno}.

  \end{proof}

  \bigskip
  
  \noindent
  {\bf Aknowledgements}. We thank R. Diestel for useful discussions.


\begin{thebibliography}{99}


\bibitem{AG1}  Andrieux D., Gaspard P.; {\em Fluctuation theorem for
currents and Schnakenberg network theory}. J. Stat. Phys. {\bf 127},
107--131 (2007).

\bibitem{AG2}   Andrieux D., Gaspard P.; {\em  Network and thermodynamic conditions for a
single macroscopic current fluctuation theorem}. Comptes Rendus Physique {\bf 8}, 579--590 (2007).

\bibitem{Maes4}  Baiesi M., Maes C.,  Net\v{o}cn\'y K.;
\emph{Computation of current cumulants for small nonequilibrium systems}.
J. Stat. Phys. \textbf{ 135}, 57--75 (2009).




\bibitem{bg}
Bertini L., Di Ges\`u G.;
\emph{Small noise asymptotic of the Gallavotti-Cohen functional in
  diffusion processes.}
 To appear on ALEA.

\bibitem{BFG} Bertini L., Faggionato A., Gabrielli D.;
\emph{Large deviations for the empirical flow
for continuous time Markov chains}.
 Ann. Inst. Henri Poincar\'e (B). To appear.











\bibitem{Bi} P. Billingsley; \emph{Convergence of probability measures}.  Second edition. Wiley Series in Probability and Statistics: Probability and Statistics. John Wiley \& Sons, Inc., New York, 1999.

\bibitem{BT} Bodineau T., Toninelli C.; \emph{Activity phase transition for constrained dynamics}. Comm.  Math. Phys. \textbf{311},  357--396 (2012).  



\bibitem{B}  Bogachev V.I.; \emph{Measure theory}. Vol.  II. Springer Verlag, Berlin, 2007.

\bibitem{Bo} Bollob\'as B.;     \emph{Modern graph theory}. 	  Graduate Texts in Mathematics \textbf{184}.  New York, Springer Verlag, 1998.





\bibitem{dF}
de La Fortelle A.; \emph{The large-deviation principle for Markov chains with continuous time.} (Russian) Problemy Peredachi Informatsii \textbf{37} no. 2, 40--61 (2001); translation in Probl. Inf. Transm. \textbf{37}
 no. 2, 120--139 (2001)






\bibitem{diestel} Diestel R.; private communication.


\bibitem{D}  Diestel R.; \emph{Graph theory}. Heidelberg, Springer, 2010.



\bibitem{DV4} Donsker M.D., Varadhan S.R.S.; \emph{Asymptotic
evaluation of certain Markov process expectations for large time}.
Comm. Pure Appl. Math. (I) \textbf{28}, 1--47 (1975); (II)
\textbf{28}, 279--301 (1975); (III) \textbf{29}, 389--461 (1976);
(IV) \textbf{36}, 183--212 (1983).













\bibitem{FD}
Faggionato A., Di Pietro D.; \emph{Gallavotti--Cohen--Type symmetry related to cycle
decompositions for Markov chains and biochemical applications}. J. Stat. Phys.
\textbf{143}, 11--32 (2011).



\bibitem{MH} Hamann M; \emph{Accessibility in transitive graphs}. Preprint 2014. Available online at http://arxiv.org/abs/1404.7677


\bibitem{WK}
Kesidis G., Walrand, J.; \emph{Relative entropy between Markov transition rate matrices.}
IEEE Trans. Inform. Theory \textbf{39} no. 3, 1056--1057 (1993).
 



\bibitem{KKT}  Kusuoka S., Kuwada K.,   Tamura Y.;  \emph{Large deviation for stochastic line integrals as $L^p$--currents}. Probab. Theory Relat. Fields \textbf{147},   649--674  (2010).


\bibitem{K} Kuwada, K.; \emph{ On large deviations for random currents induced from stochastic line integrals}. Forum Math. \textbf{18}, 639--676 (2006).



\bibitem{LS}
Lebowitz J.L., Spohn H.;
\emph{A Gallavotti-Cohen-type symmetry in the large deviation
functional for stochastic dynamics.}
J. Stat. Phys. \textbf{95}, 333--365 (1999).





\bibitem{Maes1} Maes C., Neto\u{c}n\'{y} K., Wynants B.; \emph{Steady state statistics of driven diffusions}. Physica A \textbf{387}   2675--2689 (2008).

\bibitem{Maes2} Maes C., Neto\u{c}n\'{y} K.;
\emph{Canonical structure of dynamical fluctuations in mesoscopic nonequilibrium steady states}
Europhys. Lett. \textbf{82} 30003 (2008)
%


\bibitem{Maes3} Maes C., Neto\u{c}n\'{y} K., Wynants B.;
\emph{Dynamical fluctuations for semi-Markov processes}
J. Phys. A: Math. Theor. \textbf{42} 365002 (2009)



\bibitem{Me}
Megginson R.E.;
\emph{An introduction to Banach space theory.}
Springer-Verlag, New York, 1998.





\bibitem{N}
Norris J.R.; \emph{Markov chains.} Cambridge Series in Statistical
and Probabilistic Mathematics, Cambridge University Press, Cambridge
1999.


\bibitem {SCH}  Schnakenberg J.; {\em Network theory of microscopic and macroscopic behavior of master equation systems}.
Rev.  Mod. Phys. {\bf 48} , 571--585 (1976).



\bibitem{S} Seifert U.; \emph{Stochastic thermodynamics, fluctuation theorems, and molecular machines}. Rep. Prog. Phys. {\bf 75}  126001 (2012).







\bibitem{Var}
Varadhan S.R.S.;
\emph{Stochastic processes.}
Courant Lecture notes in Mathematics {\bf 16}, American Mathematical Society (2007).


\end{thebibliography}
\end{document}